\newtheorem{theorem}{Theorem}[section]
\newtheorem{lemma}[theorem]{Lemma}
\newtheorem{proposition}[theorem]{Proposition}
\newtheorem{corollary}[theorem]{Corollary}
\newtheorem{conjecture}[theorem]{Conjecture}
\numberwithin{equation}{section}
\theoremstyle{definition}
\newtheorem{definition}[theorem]{Definition}
\newtheorem{remark}[theorem]{Remark}
\DeclareMathOperator{\mld}{mld}
\DeclareMathOperator{\lcm}{lcm}
\newcommand{\Z}{{\mathbb Z}}
\newcommand{\Q}{{\mathbb Q}}
\newcommand{\C}{{\mathbb C}}
\newcommand{\A}{{\mathbb A}}
\newcommand{\on}[1]{\operatorname{#1}}
\newcommand{\capt}[3]{\parbox{#1}{\renewcommand{\baselinestretch}{1.0}
                                                           \caption{\label{#2}\small\it #3}}}
\title{Calabi-Yau varieties of large index}
\author{Louis Esser}
\author{Burt Totaro}
\author{Chengxi Wang}
\address{UCLA Mathematics Department, Box 951555, Los Angeles, CA 90095-1555}
\email{esserl@math.ucla.edu}
\email{totaro@math.ucla.edu}
\email{chwang@math.ucla.edu}
\begin{document}

\maketitle

\section{Introduction}

Call a normal projective variety $X$ {\it Calabi-Yau }if its canonical divisor $K_X$ is $\Q$-linearly equivalent to zero. The smallest positive integer $m$
with $mK_X$ linearly equivalent to zero is called the {\it index} of $X$.
A major conjecture on the classification of Calabi-Yau varieties
predicts that, under suitable assumptions on singularities,
the index is bounded in each dimension.
More generally, the following
conjecture predicts an index bound for
Calabi-Yau pairs $(X,D)$; such a pair consists of a normal projective variety
$X$ and an effective $\Q$-divisor $D$ on $X$ such that $K_X + D \sim_{\Q} 0$.
Conjectures in this direction go back to Alexeev
and M\textsuperscript{c}Kernan-Prokhorov,
and Y.~Xu gave a formulation
close to what follows \cite{Alexeev},
\cite[Conjecture 3.8]{MP}, \cite{Xu}.

\begin{conjecture}[Index Conjecture]
\label{indexconj}
Let $n$ be a positive integer and $I \subset [0,1]$ a set of rational numbers
that satisfies the descending chain condition (DCC).  Then there is a positive integer $c(n,I)$ with the following property.
Let $(X,D)$ be a complex klt Calabi-Yau pair of dimension $n$ such that the coefficients of $D$ belong to $I$.
Then $c(n,I)(K_X + D) \sim 0$.
\end{conjecture}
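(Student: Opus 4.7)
The plan is to reduce Conjecture \ref{indexconj} to two classical ingredients of the minimal model program: (i) \emph{boundedness} of the family of klt Calabi-Yau pairs $(X,D)$ of dimension $n$ with coefficients in $I$, and (ii) an \emph{effective} form of base-point-freeness bounding the Cartier index of $K_X+D$ uniformly across a bounded family. Granted (i) and (ii), the conjecture follows: choose a finite collection of algebraic families bounding all such pairs, apply (ii) to extract a single Cartier index on each family, and take $c(n,I)$ to be the $\lcm$ of these.

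For step (i) I would run a $(K_X+D)$-trivial MMP to reach a birational model with better positivity. The DCC on $I$ should then be combined with ACC-type theorems for log canonical thresholds and DCC for volumes (Hacon-M\textsuperscript{c}Kernan-Xu) to bound the volume of $X$ with respect to some natural polarisation. With the volume bounded and singularities controlled, effective birational boundedness then embeds all such $(X,D)$ in finitely many algebraic families $\mathcal{X}\to S$ carrying a relative boundary. For step (ii), on each family one stratifies $S$ and invokes Koll\'ar's effective base-point-free theorem together with cohomology-and-base-change to descend a single integer $m$ with $m(K_X+D)\sim 0$ on every fibre; flatness of the Cartier index in $\Q$-Gorenstein families does the rest.

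The main obstacle is step (i): boundedness of klt Calabi-Yau pairs is one of the deepest open problems in birational geometry, established in full generality only through dimension three. Unlike the Fano situation handled by Birkar's BAB theorem, one cannot exploit intrinsic positivity of $K_X+D$, so the standard workaround is to apply a canonical bundle formula to some Fano-type birational contraction out of $X$ (or to the generic fibre of an MMP) and descend to a lower-dimensional \emph{generalised pair} in the sense of Birkar-Zhang, then induct on $n$. The fundamental barrier---which I expect would dominate any serious attempt---is uniformly controlling the moduli $b$-divisor produced by adjunction, since the denominators appearing there ultimately govern the index. For this reason I would not expect a full proof in the foreseeable future, and it seems likely that the strongest thing one can currently hope for is either a dimension-by-dimension attack (making (i) available up to the current frontier of boundedness results) or an unconditional proof under strengthened hypotheses, for instance with a uniform $\epsilon$-lc assumption on $(X,D)$.
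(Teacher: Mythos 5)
This statement is Conjecture \ref{indexconj}, not a theorem: the paper offers no proof of it, and indeed its whole purpose is to construct examples that conjecturally realize the extremal value of $c(n,I)$. So there is nothing in the paper to compare your argument against, and you are right to conclude that no complete proof is currently available; the paper only records that the conjecture is known in dimension at most $3$ (and in dimension $4$ when $D\neq 0$), citing Xu and Jiang--Liu.

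That said, your proposed reduction has a genuine flaw beyond the admitted open status of step (i): boundedness of klt Calabi-Yau pairs of dimension $n$ with coefficients in $I$ is not merely unknown, it is \emph{false}, already for $n=2$ and $D=0$. A projective K3 surface whose Picard group is generated by a class of self-intersection $2d$ admits no polarization of degree less than $2d$, so as $d\to\infty$ these surfaces cannot all appear as fibres of finitely many projective families; since distinct K3 surfaces are not birational to one another, even birational boundedness fails. Consequently no strategy of the form ``bound the family, then spread out a uniform Cartier index'' can work without additional polarization data. The proofs that do exist in low dimensions take a different route, which is visible in this paper's own Proposition \ref{dim2pair}: pass to the index-$1$ cover $Y$ of $(X,D)$, observe that the index equals the order of a cyclic group acting purely non-symplectically on $H^0(Y,K_Y)$, and then bound the order of such actions (boundedness of log pluricanonical representations, or, in dimension $2$, the Nikulin/Machida--Oguiso classification of non-symplectic automorphisms of K3 and abelian surfaces). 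If you want to make partial progress on the conjecture, that is the mechanism to pursue, rather than boundedness of the pairs themselves.
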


In dimension at most 3, the conjecture is true,
and a similar boundedness statement holds even for slc Calabi-Yau pairs
\cite[Theorem 1.13]{Xu}, \cite[Corollary 1.6]{JL}.  For klt Calabi-Yau
pairs $(X,D)$ with $D \neq 0$, the conjecture also holds in dimension $4$ \cite[Theorem 1.14]{Xu}. An important
result in any dimension
is that all the Calabi-Yau pairs in Conjecture \ref{indexconj}
(with given dimension and coefficients in a given DCC set $I$) actually
have coefficients only in some finite subset of $I$,
by Hacon-M\textsuperscript{c}Kernan-Xu \cite[Theorem 1.5]{HMX}.

For several classes of Calabi-Yau varieties or pairs, we construct
examples which
we conjecture have the largest index in each dimension, with supporting evidence in low dimensions.  In our examples, the index grows doubly exponentially with dimension, and may be written in terms of {\it Sylvester's sequence}, defined recursively by $s_0 = 2$ and $s_n = s_{n-1}(s_{n-1}-1) + 1$ for $n \geq 1$.

First, we produce terminal Calabi-Yau varieties
with large index. The key idea is to apply
mirror symmetry (Remark \ref{mirror})
to our Calabi-Yau varieties with an ample Weil divisor of small volume
\cite[Proposition 3.1]{ETW}.

\begin{theorem}[Corollary \ref{can-term-ex}]
\label{can-term-thm}
For each positive integer $n$, there is a complex terminal Calabi-Yau $n$-fold
with index $(s_{n-1}-1)(2s_{n-1}-3)$. In particular,
this is larger than $2^{2^{n-1}}$.
\end{theorem}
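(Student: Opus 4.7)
The strategy is to exhibit explicit weighted projective hypersurfaces (or suitable small modifications thereof) as the desired terminal Calabi-Yau $n$-folds, reading both the index and the terminal property from combinatorial data. The starting point is the Calabi-Yau varieties with a small-volume ample Weil divisor from \cite[Proposition 3.1]{ETW}, whose weights are built from Sylvester's sequence. The mechanism of Remark \ref{mirror} then converts smallness of volume into largeness of index on a dual Calabi-Yau, producing a candidate terminal Calabi-Yau $n$-fold whose index should be exactly $(s_{n-1}-1)(2s_{n-1}-3)$. Since the theorem is labeled as a corollary of a later statement, the proof will presumably derive it from a more general construction of such mirror hypersurfaces.

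I would proceed in three steps. First, write down the candidate explicitly: choose weights $(a_0,\dots,a_{n+1})$ built from Sylvester's sequence, with hypersurface degree $d = \sum a_i$, so that a general quasi-smooth $X = \{f=0\} \subset \Pro(a_0,\dots,a_{n+1})$ satisfies $K_X \sim_{\Q} 0$ by weighted adjunction. Second, compute the index as the order of $K_X$ in the class group $\text{Cl}(X)$: this is determined by the generator $\mathcal{O}_X(1)$ and the weights, and the Sylvester identity $s_m - 1 = \prod_{i=0}^{m-1} s_i$ should make the arithmetic simplify exactly to $(s_{n-1}-1)(2s_{n-1}-3)$. Third, verify terminality: check quasi-smoothness of $f$ via the weighted Jacobian criterion, then apply the Reid--Tai criterion at each cyclic quotient singularity arising from the coordinate strata of $\Pro(a_0,\dots,a_{n+1})$ that meet $X$.

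The main obstacle will be the terminality verification. Weighted projective hypersurfaces frequently pick up only canonical, not terminal, cyclic quotient singularities along low-dimensional strata, so the chosen weights must be arithmetically rigid enough that every such stratum either misses $X$, meets it transversally, or yields a cyclic quotient that passes Reid--Tai. Secondarily, one must confirm the index equals $(s_{n-1}-1)(2s_{n-1}-3)$ exactly rather than a proper divisor, which requires producing a concrete class in $\text{Cl}(X)$ whose full order is this number. The final inequality $(s_{n-1}-1)(2s_{n-1}-3) > 2^{2^{n-1}}$ is then a routine consequence of the recursion $s_n = s_{n-1}(s_{n-1}-1)+1$: the value $s_{n-1}$ grows like Vardi's constant raised to $2^n$, so $s_{n-1}^2$ dominates $2^{2^{n-1}}$ for $n$ at least $2$, and one checks the small cases by direct computation.
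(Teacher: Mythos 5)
Your approach has a genuine gap, and it is structural rather than a matter of missing details. You propose to realize the terminal Calabi-Yau $n$-fold directly as a quasi-smooth hypersurface $X=\{f=0\}\subset\Pro(a_0,\dots,a_{n+1})$ with $d=\sum a_i$ and to read off the index as the order of $K_X$ in $\operatorname{Cl}(X)$. But for such a hypersurface (well-formed, quasi-smooth, dimension $\geq 3$) the class group is generated by $\mathcal{O}_X(1)$, which is ample and hence non-torsion, and $K_X=\mathcal{O}_X(d-\sum a_i)=\mathcal{O}_X(0)$ is honestly trivial: the index is $1$. There is no arithmetic of Sylvester weights that can make $K_X$ a torsion class of order $(s_{n-1}-1)(2s_{n-1}-3)$ on a single hypersurface of this type. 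Large index can only appear after passing to a quotient (equivalently, a hypersurface in an ill-formed fake weighted projective space), and if the quotient is to remain a \emph{variety} with terminal singularities rather than a klt \emph{pair}, the group must act freely in codimension one --- and in fact freely, if terminality is to survive without further work. Your plan never introduces such a quotient, so it cannot produce the claimed index. A secondary problem is that the relevant hypersurface (Proposition \ref{wpsindex}) is only canonical, not terminal, so a Reid--Tai check on the coordinate strata would fail rather than succeed.

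The paper's actual route, which your proposal does not anticipate, is a dimension-raising trick: start from the canonical Calabi-Yau $(n-1)$-fold $X'$ of Proposition \ref{wpsindex} equipped with its purely non-symplectic $\mu_m$-action ($m=(s_{n-1}-1)(2s_{n-1}-3)$, Proposition \ref{largeindex}), replace $X'$ by a $\mu_m$-equivariant terminalization $Z$ (Proposition \ref{equivterm}, which preserves triviality of $K$ because $X'$ is canonical), and then form $S=(Z\times E)/\mu_m$ where $\mu_m$ acts on an elliptic curve $E$ by translation by an $m$-torsion point. Freeness of the translation makes the whole action on $Z\times E$ free, so $S$ is a terminal Calabi-Yau \emph{variety}; and since $\mu_m$ acts trivially on $H^0(E,K_E)$ and faithfully on $H^0(Z,K_Z)$, the index of $S$ is exactly $m$. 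Note also the index shift: the $n$-fold of the theorem carries the index built from $s_{n-1}$, precisely because the hypersurface ingredient lives in dimension $n-1$; your proposal keeps everything in dimension $n$ and so would not even target the right number. If you want to salvage your outline, the missing ideas to add are (a) the quotient by a cyclic group acting faithfully on $H^0(K)$, (b) the equivariant terminalization to fix the singularities before quotienting, and (c) the product with $E$ to make that quotient free.
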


This example should have the largest possible index
among all canonical Calabi-Yau varieties of dimension $n$,
and also among all terminal Calabi-Yau varieties of dimension $n$
(Conjecture \ref{can-term-conj}). For example, our construction
gives a known Calabi-Yau 3-fold of index $66$, the largest possible
\cite[Corollary 5]{MO},
and a new terminal Calabi-Yau
$4$-fold of index $3486$.

Next, we turn to klt Calabi-Yau pairs $(X,D)$
with standard coefficients, meaning that all coefficients of $D$
are of the form $1-\frac{1}{b}$ for positive integers $b$.
Pairs with standard coefficients naturally arise as quotients
by finite groups of varieties $Y$ with $\Q$-Cartier canonical class,
in the sense that $\pi\colon Y\to X$ has $K_Y=\pi^*(K_X+D)$.
For Calabi-Yau pairs, the index can be somewhat bigger
than for varieties. (In fact, the example in Theorem \ref{can-term-thm}
is based on the same construction used to define the pair below.) 
Our example has a simple description
as a pair on a weighted projective space; it can also be viewed
as a hypersurface
in an ill-formed fake weighted projective space.

\begin{theorem}[Theorem \ref{klt-index}]
\label{klt-thm}
For each positive integer $n$, there is a complex klt Calabi-Yau pair
of dimension $n$ with standard coefficients
that has index $(s_n-1)(2s_n-3)$. In particular,
this is larger than $2^{2^{n}}$.
\end{theorem}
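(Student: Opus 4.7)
My plan is to exhibit an explicit klt Calabi-Yau pair $(X,D)$ of the claimed form and verify its index. The construction is guided by the Egyptian-fraction identity
\[
\frac{1}{s_0} + \frac{1}{s_1} + \cdots + \frac{1}{s_{n-1}} + \frac{1}{s_n - 1} = 1,
\]
which follows by induction from $s_{n+1} = s_n(s_n - 1) + 1$ and gives the factorization $s_n - 1 = s_0 s_1 \cdots s_{n-1}$ into pairwise coprime factors. I would take $X$ to be a suitable weighted projective space of dimension $n$---equivalently, by the second description mentioned in the introduction, a hypersurface in a fake weighted projective space---and take $D$ to be a sum of coordinate hyperplane divisors $D_i$ together with one auxiliary irreducible hypersurface, assigning coefficients $1 - 1/b_i$ whose denominators $b_i$ lie among $\{s_0, s_1, \ldots, s_{n-1}, 2s_n - 3\}$.

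The key arithmetic input is that $\gcd(2s_n - 3, s_i) = 1$ for every $i < n$: since $s_i \mid s_n - 1$, we have $s_n \equiv 1 \pmod{s_i}$, so $2s_n - 3 \equiv -1 \pmod{s_i}$. Combined with the pairwise coprimality of $s_0, \ldots, s_{n-1}$, this gives
\[
\lcm(s_0, s_1, \ldots, s_{n-1}, 2s_n - 3) = s_0 s_1 \cdots s_{n-1} \cdot (2s_n - 3) = (s_n - 1)(2s_n - 3),
\]
which matches the target index exactly.

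The proof then breaks into three steps. First, verify $K_X + D \sim_{\Q} 0$ by a direct weighted-degree calculation, reducing to the Sylvester identity above, with the auxiliary hypersurface's coefficient and degree absorbing the residual deficit. Second, compute the index: since $\on{Cl}(X) \cong \Z$ and $K_X + D \sim_{\Q} 0$, the index is the smallest $m > 0$ for which $mD$ has integer coefficients, which by the above lcm computation equals $(s_n - 1)(2s_n - 3)$. Third, verify $(X,D)$ is klt: $X$ has only cyclic quotient singularities at the torus-fixed points, themselves klt, and local log-discrepancy analysis at each stratum---using that $1 - 1/b_i < 1$ for every component---keeps the pair klt, provided the auxiliary hypersurface is chosen generically enough to be transverse to the coordinate divisors.

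The main technical obstacle is the klt verification. The auxiliary hypersurface must simultaneously have the correct weighted degree, be irreducible, and intersect the coordinate strata at each torus-fixed point with enough transversality to avoid creating a log canonical (or worse) center. Verifying this requires an explicit weighted blowup at each of the finitely many singular fixed points and careful tracking of log discrepancies through the exceptional divisors, which is where the bulk of the case-by-case work lies.
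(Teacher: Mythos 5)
Your proposal follows essentially the same route as the paper: the pair lives on the weighted projective space $\mathbb{P}^n(d^{(n-1)},d-1,1)$ with $d=2s_n-2$, with $D$ a combination of $n$ of the coordinate divisors and one further degree-$d$ hypersurface carrying exactly the coefficients $1-1/s_0,\ldots,1-1/s_{n-1},1-1/(2s_n-3)$ that you predict, and the Calabi-Yau and index verifications are precisely your Sylvester identity and lcm computation. The one point where you overestimate the difficulty is the klt check: no weighted blowups or case-by-case discrepancy tracking are needed, because the paper chooses the auxiliary hypersurface so that on the affine cone over $X$ the $n+1$ relevant divisors are smooth and mutually transverse away from the origin, and a simple normal crossings configuration with all coefficients strictly less than $1$ is automatically klt.
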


This should be the largest possible index among such pairs
(Conjecture \ref{kltpairconj}).

Jihao Liu recently constructed klt Calabi-Yau pairs with
small minimal log discrepancy (mld) \cite[Remark 2.6]{Liu}.
We write out the details as Theorem \ref{smallmld}.
This is related
to several examples by Koll\'ar \cite[Example 8.16]{Kollarpairs},
\cite{Kollarlog}.
By definition, the klt property
for a pair
means that the mld is positive.
It follows from Hacon-M\textsuperscript{c}Kernan-Xu's work
that there is a positive lower bound for the mld's
of all klt Calabi-Yau pairs with standard coefficients
in a given dimension (Proposition \ref{mldbound}).
Liu's example is conjectured to achieve the minimum (Conjecture
\ref{mldconj}).

The pairs in Theorem \ref{klt-thm} and Theorem \ref{smallmld} 
above may be viewed as quotients of certain Calabi-Yau
weighted projective hypersurfaces.  These are also
notable for their extreme topological properties.  We prove that
the total rank of the orbifold cohomology of
these hypersurfaces grows doubly exponentially with dimension.

\begin{theorem}[Theorem \ref{Hodgenumbers}(ii)]
\label{extremecohomology}
For every positive integer $n$, there is a quasi-smooth
Calabi-Yau hypersurface of dimension $n$
whose sum of orbifold Betti numbers is
$$H = 2(s_0-1) \cdots (s_n-1).$$
In particular, this is larger than $2^{2^n}$ for $n > 1$.
\end{theorem}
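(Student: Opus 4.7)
The plan is to realize $X$ as the Fermat Calabi-Yau hypersurface underlying the klt pair of Theorem~\ref{klt-thm}. Set $d = 2(s_n-1)$; using the telescoping identity $\sum_{i=0}^{n-1} 1/s_i = 1 - 1/(s_n - 1)$, an immediate consequence of the Sylvester recursion $s_n = s_{n-1}(s_{n-1}-1) + 1$, the polynomial
\[
W \;=\; x_0^d + x_1^d + x_2^{s_{n-1}} + x_3^{s_{n-2}} + \cdots + x_{n+1}^{s_0}
\]
is weighted-homogeneous of degree $d$ on $\Pro(1,\,1,\,d/s_{n-1},\,d/s_{n-2},\,\ldots,\,d/s_0)$, whose weights sum to $d$; so $\{W=0\}$ defines a Calabi-Yau hypersurface $X$ of dimension $n$. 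Quasi-smoothness is immediate because each $\partial W/\partial x_i$ is a pure power of $x_i$, so the common zero locus is just the origin of the affine cone.

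The next step is to compute the Chen-Ruan orbifold Hodge numbers of $X$. Using the decomposition
\[
H^{p,q}_{\mathrm{orb}}(X) \;=\; H^{p,q}(X) \;\oplus\; \bigoplus_{g\,\in\,\mu_d\setminus\{1\}} H^{p-\mathrm{age}(g),\,q-\mathrm{age}(g)}(X^g),
\]
the untwisted part is governed by the Griffiths--Steenbrink Jacobian ring, which for the Fermat $W$ factors as a tensor product $\bigotimes_i \C[x_i]/(x_i^{d_i-1})$ with fully explicit Hilbert series. Each fixed locus $X^g$ of a nontrivial diagonal element $g$ is either empty or again a Fermat Calabi-Yau hypersurface cut out on a coordinate substratum, so the twisted sectors reduce to lower-dimensional instances of the same construction. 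This produces closed-form expressions for all $h^{p,q}_{\mathrm{orb}}(X)$, which is presumably the content of part~(i) of Theorem~\ref{Hodgenumbers}.

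Summing over $(p,q)$ then yields part~(ii). Write $T_n$ for the total orbifold Betti number in dimension $n$. The base case $T_1 = 4$ is an elliptic curve, and the combined contributions of the untwisted Jacobian count and the twisted-sector recursion give $T_n = (s_n-1)\cdot T_{n-1}$ for $n \geq 2$; this telescopes to $T_n = 2(s_0-1)(s_1-1)\cdots(s_n-1)$, as claimed. The doubly-exponential lower bound $H > 2^{2^n}$ for $n > 1$ follows from the elementary estimate $s_n > 2^{2^{n-1}}$, itself immediate from the Sylvester recursion. The main obstacle is the orbifold bookkeeping in the middle step: one must enumerate the $d-1$ nontrivial elements of $\mu_d$, identify each fixed locus on $X$ as either empty or a smaller Fermat Calabi-Yau, apply the Chen-Ruan age shift correctly, and verify that the combined sum collapses via the Sylvester recursion $s_n - 1 = s_{n-1}(s_{n-1}-1)$. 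The Fermat shape of $W$ makes each individual contribution tractable; the real content is the combinatorial simplification that produces the clean product formula.
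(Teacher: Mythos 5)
Your choice of hypersurface is fine: the Fermat polynomial of degree $d=2s_n-2$ with weights $d/s_0,\ldots,d/s_{n-1},1,1$ is quasi-smooth and Calabi-Yau for exactly the reasons you give, and (as the paper notes) the orbifold Hodge numbers depend only on the degree and weights, so this is the same computation as for the paper's $X_1^{(n)}$. But the proof of the formula $H=2(s_0-1)\cdots(s_n-1)$ is not actually carried out: the recursion $T_n=(s_n-1)T_{n-1}$ is asserted, not derived, and the one concrete mechanism you offer for it is wrong. The nontrivial twisted sectors are \emph{not} ``lower-dimensional instances of the same construction.'' The fixed loci of the isotropy elements are Fermat hypersurfaces cut out on coordinate substrata of $\mathbb{P}(d/s_0,\ldots,d/s_{n-1},1,1)$, where the surviving weights no longer sum to the degree, so they are not Calabi-Yau; and in no case do they reproduce the degree-$(2s_{n-1}-2)$ hypersurface with weights $(2s_{n-1}-2)/s_0,\ldots,1,1$ that an induction on $n$ would require (the degree and all the weights change with $n$, not just the number of variables). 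So the inductive step has no content, and the base case plus a formula consistent with the recursion is not a proof.

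What actually makes the computation close is a combinatorial mechanism your sketch never identifies. The paper evaluates Vafa--Batyrev's closed formula \eqref{Hodgeformula} directly: after clearing fractional exponents and averaging over $d$th roots of unity to extract the integer part, the total reduces to
$$H=\frac{1}{d}\sum_{\ell=0}^{d-1}\sum_{j=0}^{d-1}\ \prod_{i,\ d\mid \ell a_i} f_{d/a_i}(j),\qquad f_c(j)=\begin{cases} c-1,& c\mid j,\\ -1,& c\nmid j,\end{cases}$$
and the key Lemma \ref{counting} shows that $\sum_{j}\prod_{c\in C}f_c(j)=0$ whenever $C$ is a nonempty set of \emph{pairwise coprime} divisors of $d$. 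This kills every term except $\ell=0$ and the $\ell$ divisible by no $s_i$, and those surviving terms are what produce $2(s_0-1)\cdots(s_n-1)$. The pairwise coprimality of the Sylvester numbers is the essential input; without it (or a substitute) the ``combinatorial simplification'' you defer to is precisely the missing proof. If you want to salvage your sector-by-sector approach, you would need to compute the cohomology of each (non-Calabi-Yau) fixed stratum explicitly and then find the cancellation by hand, which amounts to redoing the paper's argument in less convenient coordinates.
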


This should be the largest possible sum of orbifold Betti numbers among 
all projective varieties with quotient singularities
and trivial canonical class
(Conjecture \ref{Hodgesumconj}).

Finally, we conjecture the largest index for a klt Calabi-Yau
variety (rather than a pair) in each dimension (Conjecture
\ref{kltCYconj}). We prove that conjecture in dimension 2:
the largest index of a klt Calabi-Yau surface
is $19$ (Proposition \ref{dim2}).
Also, the smallest mld of a klt Calabi-Yau
surface is $\frac{1}{13}$.

\noindent{\it Acknowledgements. }Esser and Totaro were
supported by NSF grant DMS-2054553.
Thanks to Joaqu\'in Moraga and Sam Payne for useful
conversations.

\section{Notation and background}
\label{notation}

We work over the complex numbers.
For the singularities of the minimal model program,
such as terminal, canonical, Kawamata log terminal (klt),
or log canonical (lc),
some introductions are \cite{Reidyoung, KM}.

For the terminology
of weighted projective spaces such as well-formedness
or quasi-smoothness, see \cite[section 2]{ETW}
for a quick summary, or \cite{Iano-Fletcher}.
A {\it fake weighted projective space }means a projective
toric variety whose divisor class group has rank 1, or equivalently
the projective toric variety associated to a lattice simplex
\cite{Kasprzyk}.
Such a variety is the quotient of a weighted projective space
by a finite abelian group.

Here are the properties of Sylvester's sequence
needed for our examples; a bit more detail is given
in \cite[section 3]{ETW}. The sequence is defined by $s_0=2$
and $s_n=s_{n-1}(s_{n-1}-1)+1$ for $n\geq 1$.
The first few terms are $2$, $3$, $7$, $43$, $1807$. We have $s_n>2^{2^{n-1}}$
for all $n$. Also, $s_n=s_0\cdots s_{n-1}+1$, and hence the numbers
in the sequence are pairwise coprime. Finally, the key
point for our applications is that the sum of the reciprocals
tends very quickly to 1. Namely:
$$\frac{1}{s_0}+\frac{1}{s_1}+\cdots+\frac{1}{s_{n-1}}=1-\frac{1}{s_n-1}.$$

We recall the definition of the minimal log discrepancy (mld).
Let $X$ be a normal variety with a $\Q$-divisor $D$ such that
$K_X+D$ is $\Q$-Cartier. For a proper birational morphism $\mu\colon  X' \rightarrow X$
with $X'$ normal and an irreducible divisor $E \subset X'$, the {\it log discrepancy} of $E$ with respect to $(X,D)$, written $a_E(X,D)$,
is the coefficient of $E$ in the $\Q$-divisor $K_{X'} + E - \mu^*(K_X + D)$.
The {\it center} of $E$ in $X$ is the image $\mu(E)$.  The log discrepancy and the center depend only on the valuation defined by $E$ on the function field of $X$; that allows us to identify some irreducible divisors
on different birational models of $X$.
For a pair $(X,D)$ and a point $x$ of the scheme $X$, the {\it minimal log discrepancy} of $(X,D)$ at $x$ is the infimum
$$\mld_x(X,D) \coloneqq \inf \{a_E(X,D) \colon  \on{center}_X(E) = \overline{x}\},$$
and the (global) {\it minimal log discrepancy} of $(X,D)$ is
$$\mld(X,D) \coloneqq \inf_{x \in X} \mld_x(X,D).$$
Furthermore, when $(X,D)$ is lc (meaning that its mld is nonnegative),
the mld can be computed using the finitely many irreducible
divisors that appear on a log resolution of $(X,D)$
(and so it is rational)
\cite[Definition 7.1]{Kollarsing}.

By definition, a pair $(X,D)$ is klt if and only if its mld
is positive.  Furthermore, in each dimension, there is a positive lower bound for the mld of every klt Calabi-Yau pair with standard coefficients.
More generally:

\begin{proposition}
\label{mldbound}
Let $n$ be a positive integer and $I\subset [0,1]$ a DCC set. Then there is a positive number $\epsilon$ such that every klt Calabi-Yau pair $(X,D)$ of dimension $n$ with coefficients of $D$ in $I$ has mld at least $\epsilon$.
\end{proposition}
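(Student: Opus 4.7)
The plan is to argue by contradiction, using the Hacon-McKernan-Xu finiteness of coefficients theorem \cite[Theorem 1.5]{HMX} cited in the introduction. Suppose the conclusion fails: there exist klt Calabi-Yau pairs $(X_i, D_i)$ of dimension $n$ with coefficients in $I$ such that $\mld(X_i, D_i) \to 0$. After passing to a subsequence, I may assume $\mld(X_i, D_i)$ is strictly decreasing. For each $i$, by the definition of mld I can choose a divisor $E_i$ over $X_i$ with $0 < a_{E_i}(X_i, D_i) < \min\{1, 2\mld(X_i, D_i)\}$; passing to a further subsequence, I assume the numbers $a_{E_i}(X_i, D_i)$ form a strictly decreasing sequence converging to $0$.

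Next I extract $E_i$. Since $(X_i, D_i)$ is klt with $0 < a_{E_i}(X_i, D_i) < 1$, a standard MMP construction yields a projective birational morphism $\mu_i \colon Y_i \to X_i$ from a $\Q$-factorial variety whose unique $\mu_i$-exceptional divisor is $E_i$. Setting $D_{Y_i} \coloneqq (\mu_i)_*^{-1} D_i + (1 - a_{E_i}(X_i, D_i)) E_i$, the discrepancy formula gives $K_{Y_i} + D_{Y_i} = \mu_i^*(K_{X_i} + D_i) \sim_\Q 0$, so $(Y_i, D_{Y_i})$ is a klt Calabi-Yau pair of dimension $n$ whose log discrepancies at valuations other than $E_i$ are inherited from $(X_i, D_i)$.

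The coefficients of every $D_{Y_i}$ lie in the common set $J \coloneqq I \cup \{1 - a_{E_j}(X_j, D_j) : j \geq 1\}$. This set is DCC: any infinite strictly decreasing sequence in $J$ would, by the pigeonhole principle, contain an infinite strictly decreasing subsequence in either $I$ or in the set $\{1 - a_{E_j}(X_j, D_j)\}$ (well-ordered because its elements are strictly increasing in $j$), and neither is possible. Applying \cite[Theorem 1.5]{HMX} to the DCC set $J$ then produces a finite subset $J_0 \subset J$ containing every coefficient of every klt Calabi-Yau pair of dimension $n$ with coefficients in $J$. In particular, the infinitely many distinct numbers $1 - a_{E_i}(X_i, D_i)$ must all lie in $J_0$, a contradiction. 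I expect the divisorial extraction step to be the main obstacle, since it requires invoking the MMP to produce a $\Q$-factorial model in which $E_i$ alone is extracted, so that the new boundary coefficient $1 - a_{E_i}(X_i, D_i)$ can be tracked cleanly; once that model is in hand, the rest of the argument is formal.
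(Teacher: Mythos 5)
Your argument is essentially the paper's: produce a new klt Calabi--Yau pair whose boundary acquires a coefficient close to $1$ by extracting a divisor of small log discrepancy, and then contradict the Hacon--M\textsuperscript{c}Kernan--Xu finiteness of coefficients \cite[Theorem 1.5]{HMX}. There is one case your write-up overlooks: the divisor $E_i$ realizing (or nearly realizing) the global mld may be a prime divisor \emph{on} $X_i$ itself, since the global mld is an infimum over all scheme points including those of codimension one, where the log discrepancy of a component of $D_i$ equals one minus its coefficient. For such an $E_i$ there is nothing to extract, and the MMP step you invoke does not apply. The paper treats this case separately by leaving the pair unchanged; the patch is immediate in your setup as well, since then $1-a_{E_i}(X_i,D_i)$ is already a coefficient of $D_i$ and the same contradiction with \cite[Theorem 1.5]{HMX} goes through. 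Two of your deviations are harmless: choosing $E_i$ with $a_{E_i}(X_i,D_i)<\min\{1,2\,\mld(X_i,D_i)\}$ rather than one attaining the mld (the mld of an lc pair is in fact attained on a log resolution, as noted in section \ref{notation}, but your approximation avoids needing that), and your explicit verification that $J$ is a DCC set. With the codimension-one case added, the proof is complete and matches the paper's.
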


\begin{proof}
We repeat the argument of \cite[Lemma 3.13]{CDHJS} for pairs rather than varieties.  Suppose by way of contradiction that there is a sequence of klt Calabi-Yau pairs $(X_i,D_i)$ of dimension $n$ with coefficients in $I$ such that the sequence $\epsilon_i  \coloneqq  \mld(X_i,D_i)$ converges to $0$.  Replacing this with a subsequence, we may suppose that the sequence $\epsilon_i$ is decreasing and that each term is less than 1.  For each positive
integer $i$, define a new klt pair as follows:
choose a point $x_i \in X_i$ with $\mld_{x_i}(X_i,D_i) = \epsilon_i$.  If the closure $Y$ of $x_i$ has codimension 1 in $X_i$,
then $Y$ has coefficient $1-\epsilon_i$ in $D_i$. In this case,
let $(X_i',D_i')$ be $(X_i,D_i)$, i.e., leave the original pair unchanged.
If the point $x_i \in X_i$ has codimension greater than $1$, then choose an exceptional divisor $E_i$ over $X$ with log discrepancy $\epsilon_i$.
Since $\epsilon_i$ is less than 1, \cite[Corollary 1.39]{Kollarsing} gives that
there is a projective birational morphism $\mu\colon X_i' \rightarrow X_i$
for which $X_i'$ is $\Q$-factorial and $E_i$ is the only exceptional divisor.
Let $D_i' = \mu_*^{-1}D_i + (1-\epsilon_i)E_i$.
Then $(X_i',D_i')$ is a new klt pair which is still Calabi-Yau. Indeed,
$$K_{X_i'} + D_i' = \mu^*(K_{X_i}+D_i) \sim_{\Q} 0.$$
Thus, for every
positive integer $i$, we've constructed a klt Calabi-Yau pair
of dimension $n$
that includes the coefficient $1-\epsilon_i$. The coefficients of these pairs are in the union $J$
of the set $I$ with the numbers $1-\epsilon_i$. Since $J$ satisfies
the descending chain condition, this conclusion contradicts
Hacon-M\textsuperscript{c}Kernan-Xu's result \cite[Theorem 1.5]{HMX}.
\end{proof}

For a klt Calabi-Yau pair $(X,D)$ with standard coefficients,
let $m$ be the index of $(X,D)$. Then
the (global) {\it index-1 cover }of $(X,D)$
is a projective variety $Y$ with canonical Gorenstein singularities
such that the canonical class $K_Y$ is linearly equivalent
to zero \cite[Example 2.47, Corollary 2.51]{Kollarsing}.
Here $(X,D)$ is the quotient of $Y$ by an action
of the cyclic group $\mu_m$ such that
$\mu_m$ acts faithfully on $H^0(Y,K_Y)\cong \C$.
(Explicitly, $D$ has coefficient $1-\frac{1}{b}$
on the image of an irreducible divisor on which the subgroup
of $\mu_m$ that acts as the identity has order $b$.)

Finally, we note the existence of equivariant
terminalizations.

\begin{proposition}
\label{equivterm}
Let a finite group $G$ act on a complex variety $X$.
Then there is a $G$-equivariant projective birational
morphism $\pi\colon Y\to X$ with $Y$ terminal and $K_Y$ nef over $X$.
If $X$ is canonical, then $K_Y=\pi^*(K_X)$.
\end{proposition}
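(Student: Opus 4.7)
The plan is to combine a $G$-equivariant resolution of singularities with a $G$-equivariant run of the minimal model program relative to $X$, and then deduce the crepant statement from the negativity lemma.

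First, I would invoke the functorial resolution of singularities of Bierstone-Milman and W{\l}odarczyk to obtain a $G$-equivariant log resolution $f\colon W\to X$; functorial resolution commutes with smooth morphisms and is therefore automatically equivariant for the action of any finite group, and the smooth variety $W$ is terminal.

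Next, I would run the relative $K_W$-MMP over $X$ equivariantly. At each stage $W_i\to X$, the group $G$ acts on $\NE(W_i/X)$ and permutes its extremal rays; any $K_{W_i}$-negative extremal ray spans, together with its finite $G$-orbit, a $G$-invariant $K_{W_i}$-negative extremal face, and this face can be contracted (or flipped past) equivariantly by using only $G$-invariant sections of the associated semi-ample bundle. The MMP terminates and outputs a $G$-equivariant projective birational morphism $\pi\colon Y\to X$ with $Y$ $\Q$-factorial terminal and $K_Y$ nef over $X$.

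If moreover $X$ is canonical, then $F\coloneqq K_W-f^*K_X$ is an effective $f$-exceptional $\Q$-divisor; this property is preserved by the MMP since it contracts only components of $F$. On $Y$, the difference $K_Y-\pi^*K_X$ is therefore an effective, $\pi$-exceptional, and $\pi$-nef $\Q$-divisor (using that $K_Y$ is $\pi$-nef by construction and $\pi^*K_X$ is $\pi$-numerically trivial). The negativity lemma \cite[Lemma 3.39]{KM} then forces this divisor to vanish, yielding $K_Y=\pi^*K_X$.

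The hardest step is verifying that the MMP can genuinely be run $G$-equivariantly: one needs equivariant contractions of $G$-invariant negative faces and equivariant flips. Both are standard for finite group actions — the required contractions and flips can be constructed either by descending to the quotient by $G$ and pulling back, or directly using $G$-invariant sections at each step — but this is the technical heart of the argument.
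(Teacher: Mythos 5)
Your proof is correct and follows essentially the same route as the paper: a $G$-equivariant (functorial) resolution, a $G$-equivariant relative MMP over $X$ (the paper cites \cite{BCHM} and \cite[section 4.3]{Prokhorov} for precisely the points you flag as the technical heart), and the negativity lemma to force $K_Y=\pi^*K_X$ when $X$ is canonical. The only cosmetic difference is that you track effectivity of $K-f^*K_X$ through the MMP steps (your parenthetical reason ``it contracts only components of $F$'' is unnecessary and not quite accurate --- effectivity is preserved simply because pushforward of an effective divisor is effective), whereas the paper reads off $a_i\geq 0$ on $Y$ directly from the definition of canonical singularities.
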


\begin{proof}
Using a canonical resolution procedure,
there is a $G$-equivariant resolution of singularities $Z\to X$
\cite[section 3.4.1]{Kollarres}. By the method
of Birkar-Cascini-Hacon-M\textsuperscript{c}Kernan,
we can run a $G$-equivariant minimal model program
for $Z$ over $X$ \cite{BCHM}, \cite[section 4.3]{Prokhorov}.
That gives a $G$-equivariant birational
contraction $Z\dashrightarrow Y$ over $X$ (meaning that $Y \to X$
is a birational morphism) with $Y$ terminal,
$Y$ projective over $X$,
and $K_Y$ nef over $X$. If $X$ is canonical, then
$K_Y=\pi^*(K_X)+\sum a_i E_i$ for some rational numbers $a_i\geq 0$.
Since $K_Y$
is nef over $X$, we have $a_i=0$ for all $i$ by the negativity lemma
\cite[Lemma 3.39]{KM}.
\end{proof}

Without the group action, we can also arrange that $Y$ is $\Q$-factorial,
but that is generally impossible in this equivariant setting.
A variety $Y$ produced by Proposition \ref{equivterm}
is ``$G\Q$-factorial'',
meaning that every $G$-invariant Weil divisor is $\Q$-Cartier;
but that does not imply that $Y$ is $\Q$-factorial
\cite[Example 1.1.1]{Prokhorov}.

\section{Calabi-Yau pairs with small mld or large index}
\label{pairsection}

In this section, we'll construct a Calabi-Yau pair
of large index, Theorem \ref{klt-thm}.
We first present Theorem \ref{smallmld},
Liu's example with small mld \cite[Remark 2.6]{Liu}.

\begin{theorem}
\label{smallmld}
For each positive integer $n$, there is a complex klt Calabi-Yau
pair of dimension $n$
with standard coefficients whose mld is $1/(s_{n+1}-1)$.
In particular, this is less than $1/2^{2^n}$.
\end{theorem}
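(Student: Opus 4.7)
My plan is to realize Liu's example concretely as a pair on $\Pro^n$ with boundary supported on hyperplanes weighted by Sylvester's sequence. I would take $n+2$ hyperplanes $H_0, \ldots, H_{n+1}$ in general position on $\Pro^n$, set $b_i = s_i$ for $0 \le i \le n$ and $b_{n+1} = s_{n+1} - 1$, and define
$$D = \sum_{i=0}^{n+1} \Bigl(1 - \frac{1}{b_i}\Bigr) H_i.$$
The coefficients are standard by construction, and the Calabi-Yau condition $K_{\Pro^n} + D \sim_{\Q} 0$ reduces to the identity $\sum_{i=0}^{n+1} 1/b_i = 1$, which is precisely the Sylvester relation
$$\frac{1}{s_0} + \cdots + \frac{1}{s_n} = 1 - \frac{1}{s_{n+1}-1}$$
recalled in Section \ref{notation}.

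Next, I would check the klt condition and compute the mld. Because the hyperplanes are in general position, $D$ is simple normal crossings on the smooth variety $\Pro^n$, and since each coefficient lies in $[0,1)$ the pair is klt. For the mld I would invoke the standard snc computation: at a point $p$ lying on exactly $H_{i_1}, \ldots, H_{i_r}$, the log discrepancies of exceptional divisors on a toric log resolution of the arrangement at $p$ are realized by monomial valuations and take the form $\sum_{j=1}^r a_j/b_{i_j}$ with $(a_j) \in \Z_{\ge 0}^r \setminus \{0\}$. Hence $\mld_p(\Pro^n,D) = \min_j 1/b_{i_j}$, and taking the global minimum yields
$$\mld(\Pro^n, D) = \min_{0 \le i \le n+1} \frac{1}{b_i} = \frac{1}{s_{n+1}-1},$$
realized at the generic point of $H_{n+1}$. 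The bound $1/(s_{n+1}-1) < 1/2^{2^n}$ for $n \ge 1$ is immediate from the inequality $s_m > 2^{2^{m-1}}$ noted in Section \ref{notation}.

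The substance of the argument is the combinatorial balancing: Sylvester's sequence is precisely the greedy choice of standard coefficients $b_i$ with $\sum 1/b_i = 1$ that maximises the largest $b_i$, and this largest $b_i$ is exactly what controls the mld. There is no serious obstacle beyond justifying the snc mld formula, i.e.\ ruling out that some non-monomial valuation could produce a smaller log discrepancy at $p$; this is a standard consequence of the toric local model of an snc arrangement, which reduces the calculation to minimising a positive linear functional on $\Z_{\ge 0}^r \setminus \{0\}$.
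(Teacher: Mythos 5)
Your construction is exactly the pair the paper uses (Liu's modification of Koll\'ar's example: general hyperplanes on $\mathbb{P}^n$ with coefficients $1-1/s_0,\ldots,1-1/s_n,\,1-1/(s_{n+1}-1)$), verified by the same Sylvester identity and the same snc/monomial-valuation computation of the mld. The proposal is correct and follows essentially the same route as the paper, just spelling out the mld calculation that the paper declares to be clear.
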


\begin{proof}
Koll\'ar constructed a klt pair $(X,D)$ with standard coefficients
such that $K_X+D$ is ample and $K_X+D$ has small volume, conjecturally
the smallest in each dimension
\cite{Kollarlog}, \cite[Introduction]{HMXbir}.
A small change gives Liu's Calabi-Yau pair.
Namely, define a pair $(X,D)$ by
\begin{equation}
    \label{mldex}
(X,D) \coloneqq \left( \mathbb{P}^n,\frac{1}{2}H_0+\frac{2}{3}H_1
+\frac{6}{7}H_2+\cdots+\frac{s_n-1}{s_n}H_n
+\frac{s_{n+1}-2}{s_{n+1}-1}H_{n+1}\right),
\end{equation}
where $H_0,\ldots,H_{n+1}$ are $n+2$ general hyperplanes in $\mathbb{P}^n$.
(Here the last coefficient of Koll\'ar's pair has been changed.)
Then $(X,D)$ is klt, it has standard coefficients,
and $K_X+D$ is $\Q$-linearly equivalent to zero.
It is clear from the last coefficient that $(X,D)$
has mld $1/(s_{n+1}-1)$, as we want.
\end{proof}

\begin{conjecture}
\label{mldconj}
Let $(X,D)$ be the pair defined in \eqref{mldex}.  Then the mld
$1/(s_{n+1}-1)$ of $(X,D)$ is the smallest possible
among all klt Calabi-Yau pairs with standard coefficients of dimension $n$.
\end{conjecture}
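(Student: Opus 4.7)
My plan is to induct on the dimension $n$. For the base case $n=1$, klt plus Calabi-Yau forces $X=\mathbb{P}^1$ and the boundary has the form $D=\sum_{i=1}^k(1-\frac{1}{b_i})P_i$ with integers $b_i\geq 2$ and $\sum 1/b_i=k-2$. A short case analysis rules out $k\geq 5$, forces $b_i=2$ for all $i$ when $k=4$, and at $k=3$ reduces the problem to maximizing $\max_i b_i$ subject to $1/b_1+1/b_2+1/b_3=1$ with $b_i\geq 2$. The solutions are $(2,3,6)$, $(2,4,4)$, and $(3,3,3)$, with extremum $6$ attained only by $(2,3,6)$; hence $\mld\geq 1/6=1/(s_2-1)$, matching the pair \eqref{mldex}.

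For the inductive step, let $(X,D)$ be a klt Calabi-Yau pair of dimension $n$ with standard coefficients and $\mld(X,D)=\epsilon$. First, following the extraction argument in the proof of Proposition \ref{mldbound}, reduce to the case where $\epsilon$ is realized along a codimension-one component $D_0\subseteq D$ with coefficient $1-\epsilon$. Writing $D=(1-\epsilon)D_0+B$ with $D_0\not\subseteq\supp(B)$, the Calabi-Yau relation becomes $K_X+D_0+B\sim_{\Q}\epsilon D_0$. Koll\'ar--Kawakita adjunction along $D_0$ then produces a klt pair $(D_0,\mathrm{Diff}_{D_0}(B))$ of dimension $n-1$ satisfying $K_{D_0}+\mathrm{Diff}_{D_0}(B)\sim_{\Q}\epsilon D_0|_{D_0}$. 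The plan is to apply an appropriately strengthened inductive hypothesis to this lower-dimensional pair and to combine the resulting bound with the Calabi-Yau numerical relation. The arithmetic core is that Sylvester's sequence is precisely the greedy solution maximizing the largest denominator subject to $\sum 1/b_i<1$ with $b_i$ positive integers, and this should force $\epsilon\geq 1/(s_{n+1}-1)$.

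The main obstacle is that adjunction trades the Calabi-Yau property for a log Fano structure (whenever $D_0$ has positive self-intersection), so the inductive hypothesis needed for $(D_0,\mathrm{Diff}_{D_0}(B))$ is not simply Conjecture \ref{mldconj} in dimension $n-1$ but a stronger statement about mlds of klt log Fano pairs of controlled volume. A second obstacle is that both the extraction step and the coefficients of $\mathrm{Diff}_{D_0}(B)$ (which involve the index of $X$ along $D_0$) are non-standard, so one must prove a DCC version of the conjecture simultaneously. A natural alternative is to pass to the global index-$1$ cover $Y\to X$: here $Y$ is a Gorenstein canonical Calabi-Yau $n$-fold carrying a faithful $\mu_m$-action, and the mld bound translates into a statement about discrepancies of $Y$ combined with stabilizer ramification indices, with Sylvester's sequence arising from optimizing ramification contributions. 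Either route requires substantial new input beyond Proposition \ref{mldbound} and the Hacon-M\textsuperscript{c}Kernan-Xu theorem \cite{HMX}, and the final arithmetic step---verifying that Sylvester's sequence gives the unique extremum---itself demands a careful greedy analysis of unit-fraction sums.
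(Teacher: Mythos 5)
The statement you are asked to prove is stated in the paper as Conjecture \ref{mldconj}; the paper does not prove it and only verifies it in dimensions $1$ and $2$ (the latter as Proposition \ref{42}, via the index-$1$ cover and Brandhorst--Hofmann's classification of purely non-symplectic cyclic actions on K3 surfaces). Your proposal likewise does not constitute a proof: the base case $n=1$ is correct and complete (the analysis of $\sum 1/b_i = k-2$ forcing $k\leq 4$ and the extremal triple $(2,3,6)$ is exactly right), but the inductive step is a plan whose essential ingredients are missing, as you yourself acknowledge. Concretely, the gap is this: after extracting a divisor computing the mld and restricting to $D_0$, the adjoint pair $(D_0,\mathrm{Diff}_{D_0}(B))$ is log Fano rather than Calabi-Yau whenever $\epsilon D_0|_{D_0}$ is nontrivial, so the inductive hypothesis you need is not Conjecture \ref{mldconj} in dimension $n-1$ but an unproved and substantially stronger statement about mlds of klt log Fano pairs with controlled positivity; moreover the coefficients of the different lie outside the standard set, so even the shape of the correct inductive statement (a DCC version) is not pinned down. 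Neither of these is supplied, and the final ``greedy Sylvester'' arithmetic is only asserted, not carried out in the generality the induction would require.

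Your alternative route via the global index-$1$ cover is in fact the one the paper uses in dimension $2$, where it succeeds only because of the complete classification of finite cyclic purely non-symplectic automorphism groups of K3 surfaces (\cite{BH}); no analogous classification exists for canonical Calabi--Yau $n$-folds with $n\geq 3$, so that route also does not close the argument. In short: the base case is sound, dimension $2$ is known by other means, and everything from dimension $3$ onward remains open. A correct submission here would either present the statement as a conjecture with the low-dimensional evidence, or supply the missing log Fano mld bound --- which would be a significant new theorem, not a routine step.
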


In dimension $1$, the example above is the klt Calabi-Yau pair
$(\mathbb{P}^1,\frac{1}{2}H_0 + \frac{2}{3}H_1 + \frac{5}{6}H_2)$,
with mld $\frac{1}{6}$,
and in dimension $2$ we have
$(\mathbb{P}^2,\frac{1}{2}H_0 + \frac{2}{3}H_1 + \frac{6}{7}H_2 +
\frac{41}{42}H_3)$, with mld $\frac{1}{42}$. Conjecture \ref{mldconj}
is true in these cases; in dimension $2$, we prove this
as Proposition \ref{42}.

The klt Calabi-Yau pair $(X,D)$ above has index $s_{n+1}-1$,
which is quite good (doubly exponential in $n$);
but we now give a better example for that problem.
Namely, we produce a klt Calabi-Yau pair with standard coefficients
that has conjecturally maximal index.
We first give a simple construction: the properties
are easy to check, but the origin of the example is hidden.
Our pair $(X,D)$ has index $6$ in dimension $1$, $66$ in dimension
$2$, and $3486$ in dimension $3$.

We found this example by applying mirror symmetry
(Remark \ref{mirror}) to our
canonical Calabi-Yau variety of conjecturally minimal volume
\cite[Proposition 3.1]{ETW}. With that approach, our pair
arises as the quotient of a Calabi-Yau hypersurface
in weighted projective space by the action of a cyclic group,
or equivalently as a hypersurface in an ill-formed fake weighted
projective space. We give those descriptions later in this section.

\begin{theorem}
\label{klt-index}
For an integer $n\geq 2$, let $d = 2s_n-2$ and
let $X$ be the weighted projective space 
$\mathbb{P}^n(d^{(n-1)},d-1,1)$, with coordinates $y_1,\ldots,y_{n+1}$.
For $1\leq i\leq n$, let $D_i$ be the divisor $\{y_i=0\}$ on $X$.
Let $D_0$ be the divisor $\{y_1+\cdots+y_{n-1}+y_ny_{n+1}+y_{n+1}^d=0\}$
on $X$. Let 
$$D = \frac{1}{2}D_0 + \frac{2}{3}D_1 + \cdots + \frac{s_{n-1}-1}{s_{n-1}}D_{n-1} + \frac{d-2}{d-1} D_n.$$
Then $(X,D)$ is a klt Calabi-Yau pair of dimension $n$ 
with standard coefficients and with index
$(s_n-1)(2s_n-3)$.

For $n=1$, let $X=\mathbb{P}^1$ with $3$ distinct complex points $p_1,p_2,p_3$.
Then $(X,\frac{1}{2}p_1+\frac{2}{3}p_2+\frac{5}{6}p_3)$
is a klt Calabi-Yau pair with standard coefficients
and with index $(s_1-1)(2s_1-3)=6$.
\end{theorem}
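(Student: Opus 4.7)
The plan is to verify four properties of $(X,D)$ in turn: that it is Calabi-Yau with $\Q$-trivial canonical class, that its coefficients are standard, that its index is $(s_n-1)(2s_n-3)$, and that it is klt.

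First I would note that $X = \mathbb{P}(d^{(n-1)}, d-1, 1)$ is well-formed since any $n$ of its weights include the weight $1$, so $\mathrm{Cl}(X) \cong \Z$ is generated by $\mathcal{O}_X(1)$ and $K_X \sim \mathcal{O}_X(-nd)$. Each coordinate divisor $D_i$ has $\deg D_i = a_i$, and $\deg D_0 = d$ because every monomial of its defining polynomial has weight $d$ (using $a_n + a_{n+1} = d$). Applying Sylvester's identity $\sum_{k=0}^{n-1} 1/s_k = 1 - 1/(s_n-1)$ together with $d = 2(s_n-1)$ (so $d/(s_n-1) = 2$) gives
\[
\deg D \;=\; d\sum_{k=0}^{n-1}\Bigl(1-\tfrac{1}{s_k}\Bigr) + (d-2) \;=\; \bigl(d(n-1) + 2\bigr) + (d-2) \;=\; nd,
\]
so $\deg(K_X + D) = 0$ and hence $K_X+D \sim_\Q 0$. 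The coefficients are manifestly of the form $1-1/b$. Since $\mathrm{Cl}(X) \cong \Z$, the index is the least $m$ making $mD$ an integer divisor, namely $\lcm(s_0, \dots, s_{n-1}, d-1)$; pairwise coprimality of the Sylvester numbers gives $\lcm(s_0,\dots,s_{n-1}) = s_n-1$, and the identity $2(s_n-1) - (2s_n-3) = 1$ shows $\gcd(s_n-1, 2s_n-3) = 1$, so the lcm equals $(s_n-1)(2s_n-3)$.

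The main obstacle is the klt check. The singular locus of $X$ consists of the isolated point $p = [0{:}\cdots{:}0{:}1{:}0]$ with isotropy $\mu_{d-1}$ and the $(n-2)$-dimensional toric stratum $Z = \{y_n = y_{n+1} = 0\}$ with generic isotropy $\mu_d$. In the affine chart at $p$ the quotient $\A^n \to \A^n/\mu_{d-1}$ with $\mu_{d-1}$ acting diagonally as $\tfrac{1}{d-1}(1,\dots,1)$ is \'etale in codimension one; the pullback of $D_0$ is smooth at the origin since $y_1+\cdots+y_{n-1}+y_{n+1}+y_{n+1}^d$ has nonzero linear part, and together with the pullbacks of $D_1,\dots,D_{n-1}$ it forms a simple normal crossings configuration with all coefficients less than one, so the pulled-back pair is klt and hence so is $(X,D)$ at $p$. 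Along $Z$ the local model is $\A^n/\mu_d$ with action $\tfrac{1}{d}(0,\dots,0,-1,1)$, again \'etale in codimension one; at a generic point of $Z$ only $D_n$ is present, while near the proper subvariety $Z \cap D_0 = \{y_1 + \cdots + y_{n-1} = 0\}\cap Z$ the bilinear term $y_n y_{n+1}$ in the defining polynomial of $D_0$ keeps $D_0$ smooth and transverse to the toric boundary. Patching these local snc pictures yields klt for $(X,D)$ globally.

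A cleaner alternative is to identify the index-$1$ cover $\pi\colon Y \to X$ with the quasi-smooth Calabi-Yau weighted projective hypersurface described in the passage preceding the theorem (related to \cite[Proposition~3.1]{ETW}); canonical singularities of $Y$ would then give klt for $(X,D)$ via $\pi^*(K_X+D) = K_Y$. The $n=1$ case is immediate: $\tfrac12 + \tfrac23 + \tfrac56 = 2 = -\deg K_{\mathbb{P}^1}$, $\lcm(2,3,6)=6$, and klt is automatic for three distinct points with coefficients less than one.
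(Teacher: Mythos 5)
Your computation of $\deg(K_X+D)$ and of the index is the same as the paper's, and correct. For the klt property the paper works with the affine cone over $X$ in $\A^{n+1}$: it reduces (via the codimension-one-free $\G_m$-quotient and the coordinate charts) to showing that the cones $F_0,\ldots,F_n$ over $D_0,\ldots,D_n$ are smooth and that every intersection of a subset of them has the expected dimension and is smooth away from the origin, the only nontrivial cases being $F_0\cap\cdots\cap F_{n-1}$ (the curve $y_ny_{n+1}+y_{n+1}^d=0$ in $\A^2$) and $F_0\cap\cdots\cap F_n$ (just the origin). Your route is a stratified version of the same idea, organized around the singular locus $\{p\}\cup\{y_n=y_{n+1}=0\}$ of $X$ and local quotient presentations that are \'etale in codimension one; the local computations you do carry out are correct. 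What your write-up omits, and what the paper's cone argument handles uniformly, is the transversality check on the \emph{smooth} locus of $X$: klt is not automatic there, and the one genuinely borderline configuration sits at the smooth point $[0:\cdots:0:-1:1]$, where all $n$ divisors $D_0,D_1,\ldots,D_{n-1}$ meet in the $n$-fold $X$ (it is the second branch of the curve $y_{n+1}(y_n+y_{n+1}^{d-1})=0$ in the paper's analysis of $F_0\cap\cdots\cap F_{n-1}$). The check succeeds --- in the chart $y_{n+1}=1$ the differentials $dy_1,\ldots,dy_{n-1}$ and $d(y_1+\cdots+y_{n-1}+y_n+1)$ are independent --- so this is a fillable omission rather than a fatal one, but ``patching these local snc pictures'' should be replaced by an explicit verification covering the smooth strata as well. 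Your suggested alternative via the index-one cover is also viable, but it presupposes the identification of that cover with the hypersurface of Proposition \ref{wpsindex}, which is exactly the content the paper defers to Proposition \ref{largeindex} and does not prove in detail.
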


\begin{proof}
The statement for $n=1$ is straightforward, so assume that $n\geq 2$.
Then $X$ is a well-formed weighted projective space.
We have $K_X=O_X(-(n-1)d-(d-1)-1)=O_X(-nd)$. Also, we have linear
equivalences $D_i\sim O_X(d)$
for $0\leq i\leq n-1$, and $D_n\sim O_X(d-1)$. So $K_X+D$ is
$\Q$-linearly equivalent to $O_X(a)$, where
\begin{align*}
a&=-nd+d-2+\sum_{i=0}^{n-1}d\bigg(1-\frac{1}{s_i}\bigg)\\
&=-2+d\bigg(1-\sum_{i=0}^{n-1}\frac{1}{s_i}\bigg)\\
&=-2+\frac{d}{s_n-1}\\
&=0.
\end{align*}
So $(X,D)$ is a Calabi-Yau pair. The coefficients of $D$
are standard.

Since $K_X$ and the divisors $D_i$ are integral multiples
of $O_X(1)$ in the divisor class group of $X$, the index
of $(X,D)$ is the least common multiple
of the denominators of the coefficients of $D$.
Since the numbers in Sylvester's sequence are pairwise coprime,
this lcm is $\lcm(s_n-1,d-1)=\lcm(s_n-1,2s_n-3)=(s_n-1)(2s_n-3)$,
as we want.

It remains to show that $(X,D)$ is klt. Since the klt property is preserved
under taking quotients by finite groups which act freely
in codimension 1
\cite[Corollary 2.43]{Kollarsing}, it suffices to show this
in the coordinate chart $y_i=1$ for each $i$. Equivalently, it suffices
to show that the affine cone $(\A^{n+1},F)$
over $(X,D)$ is klt outside the origin. Here $F$ is a linear combination
of irreducible divisors $F_0,\ldots,F_n$ with coefficients less than 1,
and so it suffices to show that $F_0,\ldots,F_n$ are smooth and transverse
outside the origin. This is clear for $F_1=\{y_1=0\},\ldots,
F_n=\{y_n=0\}$. 

Next, $F_0=\{y_1+\cdots+y_{n-1}+y_ny_{n+1}+y_{n+1}^d=0\}$
in $\A^{n+1}$.
All intersections of subsets of $F_0,\ldots,F_n$
are smooth of the expected dimension except possibly
for $F_0\cap\cdots\cap F_{n-1}$ and $F_0\cap\cdots\cap F_n$.
Here $F_0\cap\cdots\cap F_{n-1}$ (as a scheme) is the curve
$y_ny_{n+1}+y_{n+1}^d=0$ in $\A^2$, which is smooth outside
the origin, as we want. And $F_0\cap\cdots\cap F_n$
is the origin, as a set.
So all intersections are transverse outside the origin in $\A^{n+1}$,
and hence $(X,D)$ is klt.
\end{proof}

This implies Theorem \ref{klt-thm}, using that $s_n>2^{2^{n-1}}$ for all $n$.
We conjecture that this is the example of largest index.

\begin{conjecture}
\label{kltpairconj}
Let $(X,D)$ be the pair in Theorem \ref{klt-index}.  Then the index $(s_n-1)(2s_n-3)$ of $(X,D)$ is the largest possible index among all klt Calabi-Yau pairs of dimension $n$ with standard coefficients.
\end{conjecture}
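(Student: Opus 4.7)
My strategy is a two-stage program: a geometric reduction bringing an arbitrary klt Calabi-Yau pair with standard coefficients into a weighted-projective model, followed by a combinatorial optimization in the style of Egyptian fractions and Sylvester's sequence.

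For the geometric step, let $(X,D)$ be a klt Calabi-Yau pair of dimension $n$ with standard coefficients and index $m$. I would first pass to the index-$1$ cover $Y\to X$ of Section~\ref{notation}, so that $Y$ is a canonical Calabi-Yau variety with $K_Y\sim 0$ and $(X,D)=Y/\mu_m$ with $\mu_m$ acting faithfully on $H^0(Y,K_Y)$. The question becomes: how large can the order of such a faithful cyclic action be in dimension $n$? I would then apply a $\mu_m$-equivariant terminalization (Proposition~\ref{equivterm}) and run an equivariant MMP to bring $Y$ to a Mori fiber space, and combine this with the canonical bundle formula to reduce the index question to lower-dimensional data. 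The aim of this stage is to prove that an extremal configuration is realized on a fake weighted projective space with boundary supported on the torus-invariant divisors, together with at most one extra divisor playing the role of $D_0$ in Theorem~\ref{klt-index}.

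For the combinatorial step, assume $X$ is a fake weighted projective space with weights $a_1,\ldots,a_{n+1}$ and $D=\sum_{i=0}^{n}(1-1/b_i)D_i$ for a small list of explicit divisor classes $D_i$. The relation $K_X+D\sim_{\Q} 0$ becomes a rational linear identity in the weights and the fractions $1/b_i$, and the index is $\lcm(b_0,\ldots,b_n)$. Since this $\lcm$ is maximized (for a fixed sum) when the $b_i$ are pairwise coprime, I would reduce to that case and then invoke the classical Sylvester argument: at each step a partial sum $1-1/N$ forces the next denominator to be at least $N$, so the greedy choice $b_i=s_i$ for $i<n$ is extremal, while the final denominator is allowed to be as large as $2s_n-3$ because the Calabi-Yau identity prescribes a specific rational value rather than exactly $1$. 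Multiplying the resulting pairwise-coprime denominators gives the bound $(s_n-1)(2s_n-3)$, with equality realized by the pair of Theorem~\ref{klt-index}.

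The main obstacle is the first stage. Even finiteness of the index in a fixed dimension (Conjecture~\ref{indexconj}) is known in full generality only through dimension $3$, and through dimension $4$ when $D\neq 0$; for $n\geq 5$ the mere boundedness of the index is still open. A realistic intermediate target would therefore be to establish Conjecture~\ref{kltpairconj} within the restricted class of cyclic quotients of quasi-smooth weighted projective hypersurfaces, where the reduction step is essentially trivial and only the combinatorial optimization remains.
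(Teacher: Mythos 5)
The statement you are addressing is a \emph{conjecture}, and the paper does not prove it: the only evidence offered is Proposition \ref{dim2pair}, which verifies it in dimensions $1$ and $2$ by an entirely different route (passing to the index-$1$ cover $Y$, lifting the $\mu_m$-action to the minimal resolution $Z$, and invoking the Nikulin/Machida--Oguiso classification of purely non-symplectic cyclic automorphisms of K3 surfaces, which gives $m\le 66$ \cite{MO}). Your proposal is a research program rather than a proof, and you say so yourself: the entire first stage --- reducing an arbitrary klt Calabi-Yau pair with standard coefficients to a boundary on a (fake) weighted projective space --- is not established and is well beyond current technology. Indeed, it would in particular imply boundedness of the index (Conjecture \ref{indexconj} for standard coefficients), which is open in dimension $\ge 5$; and even granting boundedness, there is no known structural result forcing the extremal pair to live on a toric model. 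An equivariant MMP on $Y$ (Proposition \ref{equivterm}) plus the canonical bundle formula gives a fibration structure in some cases, but nothing close to the claimed reduction.

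The second, combinatorial stage is also not as routine as you suggest. The quantity to be maximized is $\lcm(b_0,\ldots,b_n)$ subject to a linear relation involving both the $b_i$ \emph{and} the degrees of the divisors $D_i$, which are themselves free parameters (note that in Theorem \ref{klt-index} the divisor $D_n$ has degree $d-1$ while the others have degree $d$, and the last denominator is $2s_n-3$, not a Sylvester number). Maximizing an lcm under such a constraint is not the classical Curtiss/Sylvester greedy problem of maximizing the last denominator in a unit-fraction expansion of $1$, and the step ``the lcm is maximized when the $b_i$ are pairwise coprime, so reduce to that case'' does not follow: changing the $b_i$ to be pairwise coprime changes the constraint they must satisfy. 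So even the restricted toric version of the conjecture would require a genuinely new argument. If you want a tractable intermediate goal, the dimension-$2$ case via the K3 classification (as in Proposition \ref{dim2pair}) is the model to study; your weighted-projective reduction does not recover even that case.
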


\begin{proposition}
\label{dim2pair}
Conjecture \ref{kltpairconj} is true in dimensions at most $2$.
Moreover, there is exactly one klt Calabi-Yau pair with standard
coefficients that has index $6$ in dimension $1$ (up to isomorphism),
and there are exactly two
(related by a blow-up) of index $66$ in dimension $2$.
\end{proposition}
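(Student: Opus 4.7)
\emph{Dimension 1.} A klt Calabi-Yau pair $(X,D)$ of dimension $1$ with standard coefficients forces $\deg D = -\deg K_X$, so either $g(X) = 1$ and $D = 0$ (giving only index $1$), or $X = \mathbb{P}^1$ and $\sum_i (1 - 1/b_i) = 2$. In the latter case, I enumerate: three points give $\sum 1/b_i = 1$ with integer solutions $(2,3,6), (2,4,4), (3,3,3)$ and $\lcm$s $6, 4, 3$; four points force $(2,2,2,2)$ with $\lcm$ $2$; and no solutions exist with five or more points. The maximum index is $6$, realized uniquely up to automorphism of $\mathbb{P}^1$ by $(\mathbb{P}^1, \tfrac{1}{2}p_1 + \tfrac{2}{3}p_2 + \tfrac{5}{6}p_3)$.

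\emph{Dimension 2, index bound.} Let $(X,D)$ be a klt Calabi-Yau pair of dimension $2$ with standard coefficients and index $m$, and let $\pi\colon Y \to X$ be its global index-$1$ cover from Section \ref{notation}. Then $Y$ is a projective surface with canonical (Du Val) Gorenstein singularities and $K_Y \sim 0$, carrying a faithful $\mu_m$-action on $H^0(Y,K_Y) \cong \C$. Let $\mu\colon \widetilde Y \to Y$ be the minimal resolution; since $Y$ is canonical, $\mu$ is crepant, so $K_{\widetilde Y} \sim 0$. As every curve $C$ on $\widetilde Y$ satisfies $K_{\widetilde Y} \cdot C = 0$, there are no $(-1)$-curves, so $\widetilde Y$ is minimal, hence a K3 surface or an abelian surface. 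The $\mu_m$-action lifts equivariantly to $\widetilde Y$, and a generator is a purely non-symplectic automorphism of order $m$.

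The central input is Kondo's theorem: any automorphism of a projective K3 surface has order at most $66$, and by the classification (Machida--Oguiso) the K3 surface attaining $m = 66$ is unique up to isomorphism. For the abelian case, an elementary character-theoretic argument, using that the characteristic polynomial of $\sigma^*$ on $H^1(\widetilde Y, \Q)$ is a degree-$4$ product of cyclotomic polynomials, bounds $m$ by $12$. Together these give $m \leq 66$, proving Conjecture \ref{kltpairconj} in dimension $2$.

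\emph{Counting pairs of index $66$.} Fix Oguiso's unique K3 surface $\widetilde Y$ with non-symplectic automorphism $\sigma$ of order $66$. Every klt Calabi-Yau pair of index $66$ is recovered as $X = Y/\mu_{66}$, where $Y$ is the image of $\widetilde Y$ after contracting some $\sigma$-invariant ADE configuration of $(-2)$-curves; the boundary $D$ is then determined by the ramification of $\widetilde Y \to X$ along the codimension-$1$ fixed locus of $\sigma$. I would enumerate the $\sigma$-invariant $(-2)$-configurations on $\widetilde Y$ using its explicit elliptic fibration model, the fixed locus of $\sigma$, and the induced action on $\Pic(\widetilde Y)$; the expected outcome is exactly two admissible configurations, yielding two pairs $(X_1, D_1)$ and $(X_2, D_2)$ related by a single blow-up. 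The main obstacle is precisely this final lattice-theoretic enumeration on Oguiso's K3, for which explicit control of the $\sigma$-invariant $(-2)$-classes in $\Pic(\widetilde Y)$ is required.
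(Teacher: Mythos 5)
Your overall strategy matches the paper's: pass to the global index-$1$ cover $Y$, take its minimal resolution $Z$, lift the $\mu_m$-action (this lifting is justified in the paper by Proposition \ref{equivterm}), and split into the K3 and abelian cases, using Machida--Oguiso for the bound $m\le 66$ and a faithful action on $H^1(Z,\Z)\cong\Z^4$ for the abelian bound $m\le 12$. Your dimension-$1$ argument by direct enumeration of the tuples $(2,3,6)$, $(2,4,4)$, $(3,3,3)$, $(2,2,2,2)$ is complete and in fact more explicit than the paper's one-line remark. One wording slip: it is not true that ``any automorphism of a projective K3 surface has order at most $66$'' (K3 surfaces admit automorphisms of infinite order); the statement you need, and the one Machida--Oguiso prove, is that a \emph{purely non-symplectic} automorphism of finite order has order at most $66$, with the order-$66$ example unique up to isomorphism.

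The genuine gap is the final count of index-$66$ pairs, which you explicitly leave as an open-ended ``lattice-theoretic enumeration.'' The paper closes this with one concrete fact from Machida--Oguiso's analysis (\cite[section 2]{MO}): the unique K3 surface $Z$ carrying a purely non-symplectic automorphism of order $66$ has Picard rank $2$, and its cone of curves is spanned by a single $(-2)$-curve $C$ and an elliptic curve of self-intersection $0$. Hence $C$ is the \emph{only} $(-2)$-curve on $Z$, so the only canonical surfaces with minimal resolution $Z$ are $Z$ itself and the nodal surface obtained by contracting $C$ (automatically $\sigma$-invariant, being unique). This immediately yields exactly two pairs, $Z/\mu_{66}$ and $Y/\mu_{66}$, with no further enumeration of ADE configurations or of the induced action on $\Pic(Z)$ needed. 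Without importing this Picard-rank-$2$ statement (or proving it yourself), your plan does not establish that the answer is two rather than some larger number, so you should cite it as the decisive input rather than an ``expected outcome.''
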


\begin{proof}
This is elementary in dimension $1$. (The index-1 cover of $(X,D)$
is the unique elliptic curve over $\C$ with automorphism group of order 6.)
So let $(X,D)$ be a klt Calabi-Yau pair of dimension $2$ with standard
coefficients and index $m$. Let $Y$ be the index-1 cover
of $(X,D)$ (section \ref{notation}). Thus
$Y$ is a projective surface with canonical singularities
such that $K_Y$ is trivial.
Here $(X,D)$ is the quotient of $Y$ by an action
of the cyclic group $\mu_m$ which is purely non-symplectic,
meaning that $\mu_m$ acts faithfully on $H^0(Y,K_Y)\cong \C$.

Let $Z$ be the minimal resolution of $Y$; then $K_Z$ is trivial,
and so $Z$ is an abelian surface or a K3 surface.
The action of $\mu_m$ lifts to $Z$
by Proposition \ref{equivterm}. Clearly the action of $\mu_m$
on $Z$ is still purely non-symplectic. When $Z$ is a K3 surface,
Nikulin showed that $m\leq 66$, and Machida and Oguiso
showed that for $m=66$, $Z$ and the action of $\mu_{66}$ are unique up
to isomorphism and automorphisms of $\mu_{66}$ \cite[Main Theorem 1]{MO}.
When $Z$ is an abelian surface, $\mu_m$ acts faithfully on $H^0(Z,K_Z)\subset
H^2(Z,\C)=\Lambda^2H^1(Z,\C)$, so it acts faithfully on $H^1(Z,\Z)\cong
\Z^4$, which implies that $m\leq 12$.

There remains the problem of classifying the contractions $Z\to Y$
of the K3 surface $Z$ above. Machida and Oguiso show that $Z$ has Picard
rank $2$, with cone of curves spanned by one $(-2)$-curve $C$ and one elliptic
curve with self-intersection zero \cite[section 2]{MO}.
It follows that $C$ is the only $(-2)$-curve in $Z$.
So the only canonical K3 surfaces with minimal resolution $Z$
are $Z$ itself and the surface $Y$ with a node obtained by contracting
$C$. So a klt Calabi-Yau pair with standard coefficients, dimension $2$,
and index $66$ is isomorphic to $Z/\mu_{66}$ or to $Y/\mu_{66}$. The surface $X$
of Theorem \ref{klt-index} is the latter one, since its index-1
cover (Proposition \ref{wpsindex}) has a node.
\end{proof}

We now describe the example of Theorem \ref{klt-index}
as a quotient of a hypersurface in weighted projective space.

\begin{proposition}
\label{wpsindex}
For a positive integer $n$, let $d \coloneqq 2s_n - 2 = 2s_0 \cdots s_{n-1}$.  Then the hypersurface $X'$ of degree $d$ in the weighted projective space
$Y'  \coloneqq  \mathbb{P}(d/s_0,\ldots,d/s_{n-1},1,1)$ defined by the equation $x_0^2 + x_1^3 + \cdots + x_{n-1}^{s_{n-1}} + x_n^{d-1} x_{n+1} + x_{n+1}^d = 0$
is quasi-smooth of dimension $n$,
canonical, and has $K_{X'}$ linearly equivalent
to zero.
\end{proposition}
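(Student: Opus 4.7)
The plan is to verify, in turn: the weights of $Y'$ are positive integers with $Y'$ well-formed, the polynomial $f$ is weighted homogeneous of degree $d$, $X'$ is quasi-smooth of dimension $n$, $K_{X'}\sim 0$, and $X'$ has canonical singularities. The first two points are essentially bookkeeping: the Sylvester numbers $s_0,\ldots,s_{n-1}$ are pairwise coprime and all divide $d = 2s_0\cdots s_{n-1}$, so each $d/s_i$ is a positive integer, and $Y'$ is well-formed because two of its weights equal $1$. Each monomial of $f$ has degree $d$ by inspection: $x_i^{s_i}$ has degree $s_i\cdot(d/s_i) = d$, while $x_n^{d-1}x_{n+1}$ and $x_{n+1}^d$ each have degree $d$.

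For quasi-smoothness I would compute the partial derivatives: $\partial f/\partial x_i = s_i x_i^{s_i-1}$ for $0\le i\le n-1$, $\partial f/\partial x_n = (d-1)x_n^{d-2}x_{n+1}$, and $\partial f/\partial x_{n+1} = x_n^{d-1}+dx_{n+1}^{d-1}$. A common zero of all partials is forced to satisfy $x_0=\cdots=x_{n-1}=0$ from the first $n$ equations, and then a brief case split on whether $x_n$ or $x_{n+1}$ vanishes (using the last two equations) forces $x_n=x_{n+1}=0$. Hence the affine cone is smooth away from the origin. The triviality of $K_{X'}$ will then follow from the adjunction formula $K_{X'} = \mathcal{O}_{X'}\bigl(d-\sum_i w_i\bigr)$ combined with the Sylvester identity $\sum_{i=0}^{n-1}1/s_i = 1 - 1/(s_n-1)$ recalled in Section \ref{notation} and the value $d=2(s_n-1)$, which together give $\sum_i w_i = d$.

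The main substantive step is showing $X'$ is canonical. Since $X'$ is quasi-smooth, it has only cyclic quotient singularities inherited from the singular strata of $Y'$, so $X'$ is klt. Since $K_{X'}$ is a Cartier divisor (it is trivial as a Weil divisor class), every log discrepancy of $X'$ along an exceptional divisor is an integer; combined with the klt inequality (log discrepancies strictly positive), this forces every log discrepancy to be at least $1$, i.e., every discrepancy to be at least $0$, which is the canonical condition. I expect the only delicate point to be confirming that $X'$ is well-formed \emph{as a hypersurface} in $Y'$ so that the adjunction formula applies literally; this reduces to the arithmetic condition that the $\gcd$ of any $n$ of the weights $w_0,\ldots,w_{n+1}$ divides $d$, which holds trivially because any such subset either contains one of the weights equal to $1$ or is contained in $\{w_0,\ldots,w_{n-1}\}$, each of whose elements divides $d$.
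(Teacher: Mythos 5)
Your proposal is correct and follows essentially the same route as the paper: verify quasi-smoothness (hence quotient singularities and klt), compute that the weights sum to $d$ so that adjunction gives $K_{X'}\sim 0$, and conclude canonicity from the fact that a klt variety with Cartier canonical class has integral, hence nonnegative, discrepancies. You simply fill in details the paper leaves implicit (the partial-derivative check and the well-formedness of $X'$ as a hypersurface).
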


\begin{proof}
Since the weighted projective space $Y'$ has two weights equal to 1, it is well-formed. One can verify that the hypersurface $X'$ is quasi-smooth, meaning that
its affine cone is smooth outside the origin
in $\mathbb{A}^{n+2}$.  The sum of the weights is $d/s_0 + \cdots + d/s_{n-1} + 2 = d(1-1/(s_n-1)) + 2 = d$,
and so $K_{X'}$ is linearly equivalent to zero. Since $X'$ is quasi-smooth,
it has quotient singularities, hence is klt. Since $K_{X'}$
is linearly equivalent to zero, it is Cartier. It follows
that $X'$ is canonical.
\end{proof}

\begin{proposition}
\label{largeindex}
With the same notation as above, let $m \coloneqq (s_n-1)(2s_n - 3)$ and let the group
$\mu_m$ of $m$th roots of unity act on the weighted projective space $Y'$ as follows. For any $\zeta \in \mu_m$,
\begin{equation}
\label{groupaction}
    \zeta[x_0 : \cdots  : x_{n+1}] = [\zeta^{d/(2s_0)} x_0: \zeta^{d/(2s_1)} x_1 : \cdots : \zeta^{d/(2s_{n-1})} x_{n-1} : x_n : \zeta^{d/2} x_{n+1}].
\end{equation}
The hypersurface $X'$ is invariant under this group action. The quotient of $X'$ by $\mu_m$ is a klt Calabi-Yau pair $(X,D)$ with standard coefficients and index $m$, isomorphic to the pair in Theorem \ref{klt-index}.
\end{proposition}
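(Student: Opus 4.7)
The plan is to verify that $X'$ is preserved by the $\mu_m$-action of (\ref{groupaction}), construct an explicit morphism $\phi\colon X'\to X$ onto the weighted projective space $X$ of Theorem~\ref{klt-index}, show by a degree count that $\phi$ realizes the quotient $X'/\mu_m$, and identify the branch divisor of $\phi$ with $D$. Invariance of $X'$ is a monomial check: every term of the defining polynomial has $\mu_m$-weight $d/2 = s_n-1$ modulo $m$. For $x_i^{s_i}$ with $i<n$ and for $x_n^{d-1}x_{n+1}$ this is immediate; for $x_{n+1}^d$ it uses the identity $d(d-1)/2 = (s_n-1)(2s_n-3) = m$ to reduce $d\cdot(d/2)$ to $d/2$ modulo $m$.

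Next I will define
\[
\phi(x_0:\cdots:x_{n+1}) = (x_1^{s_1}:\cdots:x_{n-1}^{s_{n-1}}:x_n^{d-1}:x_{n+1}),
\]
landing in $X=\mathbb{P}(d^{(n-1)},d-1,1)$. The $Y'$-weight of each component matches the weight of the corresponding coordinate $y_j$ of $X$, so $\phi$ is equivariant for the two $\mathbb{C}^*$-actions; the components cannot simultaneously vanish on $X'$, since $F=0$ would then force $x_0=0$ as well. Taking $\lambda=\zeta^{d/2}$, one verifies $\phi(\zeta\cdot x)=\lambda\cdot\phi(x)$ in $X$ (using $\lambda^{d-1}=\zeta^m=1$), so $\phi$ is $\mu_m$-invariant.

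I then compute the degree of $\phi$: over a generic $[y_1^*:\cdots:y_{n+1}^*]\in X$, and after fixing a lift $\lambda\in\mathbb{C}^*$ (which absorbs the ambient $\mathbb{C}^*$-scaling on $X'$), a preimage in the affine cone requires an $s_j$-th root for each $x_j$ with $1\le j\le n-1$, a $(d-1)$-th root for $x_n$, a unique value for $x_{n+1}$, and one of two square roots for $x_0$ determined by $F=0$. Multiplying gives $2s_1\cdots s_{n-1}(d-1)=(s_n-1)(d-1)=m=|\mu_m|$ distinct preimages, so the induced map $X'/\mu_m\to X$ is finite of degree $1$ onto a normal variety and thus an isomorphism by Zariski's main theorem.

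Finally, I will identify $D$ by computing stabilizers of prime divisors on $X'$. On $\{x_i=0\}\cap X'$ with $0\le i\le n-1$, the coordinate $x_n$ forces the scaling parameter $\lambda=1$, leaving the condition $\zeta^{(s_n-1)/s_j}=1$ for each $j\ne i$; by pairwise coprimality of the Sylvester numbers and $s_n-1=s_0\cdots s_{n-1}$, the greatest common divisor of these exponents equals $s_i$, so the generic stabilizer is $\mu_{s_i}$. On $\{x_n=0\}\cap X'$, forcing $\lambda=\zeta^{d/2}$ yields the condition $\zeta^{m/s_j}=1$ for $j<n$, with gcd $2s_n-3=d-1$, so the stabilizer is $\mu_{d-1}$; all other divisors have trivial stabilizer. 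Under $\phi$ these branches map to $D_i=\{y_i=0\}$ for $1\le i\le n$ and, via $F=0$, to $D_0=\{y_1+\cdots+y_{n-1}+y_ny_{n+1}+y_{n+1}^d=0\}$, yielding branch divisor $\sum_{i=0}^{n-1}(1-1/s_i)D_i+(1-1/(d-1))D_n=D$. Theorem~\ref{klt-index} then supplies the klt, Calabi-Yau, standard-coefficient, and index-$m$ conclusions. The main bookkeeping obstacle is the gcd calculations in the stabilizer analysis, together with the coordination of the ambient $\mathbb{C}^*$-rescaling with the discrete root choices in the degree count; both rest on the pairwise coprimality of the Sylvester numbers and on the coprime factorization $m=(s_n-1)(2s_n-3)$.
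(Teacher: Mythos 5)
The paper gives no proof of Proposition \ref{largeindex} at all --- it states ``We omit the proof, since this is just a different description of the same example'' --- so there is nothing to compare your argument against; what you have written supplies the omitted verification, and it is essentially correct. Your strategy (check invariance of $X'$, write down the explicit quotient map $\phi(x)=(x_1^{s_1}:\cdots:x_{n-1}^{s_{n-1}}:x_n^{d-1}:x_{n+1})$, count a generic fiber, and match inertia groups of the coordinate divisors against the coefficients of $D$) is exactly the natural way to make the identification precise, and all of your computations check out: the invariance of $x_{n+1}^d$ via $d\cdot(d/2)\equiv d/2 \bmod m$, the fiber count $2s_1\cdots s_{n-1}(d-1)=m$ after normalizing $x_{n+1}=1$, the stabilizer $\mu_{s_i}$ along $\{x_i=0\}$ for $i\le n-1$, the stabilizer $\mu_{d-1}$ along $\{x_n=0\}$, and the identification $\phi^{-1}(D_0)=\{x_0=0\}$ via $F=0$.

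Two steps deserve an explicit line of justification. First, knowing that a generic fiber of $\phi$ has $m=|\mu_m|$ points does not by itself give that $X'/\mu_m\to X$ has degree $1$: the fiber could a priori be a union of several orbits of size less than $m$. You need the action of $\mu_m$ on $X'$ to be faithful (equivalently, generic orbits to have size $m$), which follows from $\gcd\bigl(d/2,\,d/(2s_0),\ldots,d/(2s_{n-1})\bigr)=1$, a consequence of the pairwise coprimality you already use; alternatively, your stabilizer computation shows the homomorphism $\mu_m\to\mu_{s_0}\times\cdots\times\mu_{s_{n-1}}\times\mu_{d-1}$ recording the root-of-unity adjustments is injective, hence the generic fiber is a single orbit. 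Second, the assertion that ``all other divisors have trivial stabilizer'' needs the observation that the fixed locus of any nontrivial $\zeta\in\mu_m$ is contained in the union of the coordinate hyperplane sections $\{x_j=0\}\cap X'$, and that $X'$ contains no codimension-two coordinate stratum (each such stratum meets a surviving monomial of $F$), so no non-coordinate prime divisor of $X'$ can have nontrivial inertia. With those two remarks added, the argument is complete.
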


We omit the proof, since this is just a different description
of the same example.

\begin{remark}
\label{mirror}
\hspace{2em}
\begin{enumerate}
\item The index $m \coloneqq (s_n-1)(2s_n-3) = (d-1)d/2$ in this example
coincides with the degree of the hypersurface which we conjecture is the canonical Calabi-Yau $n$-fold with an ample Weil divisor of minimum volume \cite[Conjecture 1.2]{ETW}.  Indeed, the hypersurface defined in Proposition \ref{wpsindex} is related to the small volume example by mirror symmetry.
Specifically, the hypersurface $X'$ in Proposition \ref{wpsindex} above is the Berglund-H\"ubsch-Krawitz (BHK) mirror of the hypersurface
$$\widehat{X'_m} \subset \mathbb{P}(m/s_0,\ldots,m/s_{n-1},s_n-1,s_n-2)$$
of degree $m$ defined by the equation $x_0^2 + x_1^3 + \cdots + x_{n-1}^{s_n-1} + x_n^{d-1} + x_n x_{n+1}^{d} = 0$. That is, the equations for $X'$ and $\widehat{X'}$, each with
exactly $n+2$ monomials in $n+2$ variables, are the transposes
of each other, in terms of the explicit BHK recipe
for mirror symmetry \cite[Section 2.3]{ABS}.
\item Section \ref{cohosection} describes the extreme topological behavior
of these Calabi-Yau hypersurfaces.
\item Another striking feature of Proposition \ref{wpsindex} is
that the weighted projective space $\mathbb{P}(d/s_0,\ldots,d/s_{n-1},1,1)$ has the largest anticanonical volume of any canonical toric Fano $(n+1)$-fold \cite[Corollary 1.3]{BKN}.
\end{enumerate}
\end{remark}

For possible future use, we give one last description of
the klt Calabi-Yau pair $(X,D)$ of Theorem \ref{klt-index},
now as a hypersurface in an ill-formed
fake weighted projective space. (Ill-formedness means that we have a toric
pair, not just a toric variety.)
It is convenient to use the language
of toric Deligne-Mumford stacks \cite{BCS}.  Just as there is a projective $\Q$-factorial toric variety associated to a complete simplicial fan in a lattice $N$, we can associate a toric Deligne-Mumford stack to the data of a {\it stacky fan} $\mathbf{\Sigma}$.  A stacky fan $\mathbf{\Sigma}$ is a triple $(N,\Sigma, \beta)$, where $N$ is a finitely generated abelian group, $\Sigma$ is a complete simplicial fan in $N_{\Q} = N \otimes_{\Z} \Q$ with $r$ rays, and $\beta$
is a set of $r$ elements of $N$ that span the rays of $\Sigma$.
(They need not be the smallest lattice points on these rays.)
In our case, $N \cong \Z^{n+1}$ is torsion-free, which means that the
associated stack has trivial generic stabilizer. If the chosen point
of a given ray $\rho$
is a positive integer $b$ times the smallest lattice point, then the associated
toric stack has stabilizer group $\mu_b$ along the irreducible divisor $D$
associated to $\rho$, and the associated pair has coefficient $(b-1)/b$
along $D$.

Define a stacky fan $\mathbf{\Sigma}$ using the following distinguished points generating rays of a simplicial fan in $N \cong \Z^{n+1}$, where $e_0,\ldots,e_n$ is a basis for $N$:
$$v_i =
\begin{cases}
s_i e_i, & i = 0,\ldots, n-1, \\
(d-1)e_n, & i = n, \\
-d(e_0 + \cdots + e_{n-1}) - (d-1)e_n, & i = n+1.
\end{cases}
$$

Most of these lattice points are not primitive on the respective rays, hence the stacky behavior.  Note also that $(d/s_0)v_0 + \cdots (d/s_{n-1})v_{n-1} + v_n + v_{n+1} = 0$.  Therefore, these vectors define the weighted projective space $\mathbb{P}(d/s_0,\ldots, d/s_{n-1},1,1)$ in the lattice spanned by $v_0,\ldots,v_{n+1}$.  The resulting stack will thus be a quotient of this weighted projective space, namely the quotient in Proposition \ref{largeindex}.
The pair $(X,D)$ is the pair associated to a hypersurface in this stack.

\section{Terminal Calabi-Yau varieties of large index}

Building on the klt Calabi-Yau pair of Theorem \ref{klt-index},
we now construct terminal
Calabi-Yau varieties with large index, conjecturally optimal.

\begin{corollary}
\label{can-term-ex}
Let $n$ be an integer at least $2$. Let $X'$ be the Calabi-Yau hypersurface of dimension $n-1$ with an action of the cyclic group of order
$m \coloneqq (s_{n-1}-1)(2s_{n-1}-3)$ from Proposition \ref{largeindex}.
Let $Z$ be a $\mu_m$-equivariant terminalization of $X'$
(Proposition \ref{equivterm}). Let $E$ be a smooth elliptic curve,
and let $p\in E(\C)$ be a point of order $m$.
Let $\mu_m$ act on $Z \times E$ by
$$\zeta (x,y) = (\zeta(x), y + p),$$
where $\zeta$ is a primitive $m$th root of unity.
Then the quotient $S = (Z\times E)/\mu_m$
is a terminal Calabi-Yau $n$-fold
that has index $m$.
\end{corollary}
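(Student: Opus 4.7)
The plan is to verify four things in turn: that $\mu_m$ acts freely on $Z\times E$, that $K_S\sim_{\Q}0$, that $S$ is terminal, and that the order of $K_S$ in $\mathrm{Pic}(S)$ is exactly $m$. The first three are formal; the real work is in the last.

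Translation by $p$ has order $m$ on $E$, so for each nontrivial $\zeta^k\in\mu_m$, translation by $kp$ has no fixed points. Hence the diagonal action on $Z\times E$ is free, and the quotient map $\pi\colon Z\times E\to S$ is \'etale of degree $m$. By Proposition \ref{wpsindex}, $K_{X'}$ is linearly equivalent to zero, and Proposition \ref{equivterm} then yields $K_Z\sim 0$; combined with $K_E\sim 0$, this gives $K_{Z\times E}\sim 0$, so $\pi^*K_S\sim 0$. In particular, $K_S\sim_{\Q}0$ and $mK_S\sim 0$. Since $\pi$ is \'etale and $Z\times E$ is terminal (the product of a terminal variety with a smooth curve), $S$ is terminal as well.

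The heart of the proof is showing that no smaller multiple of $K_S$ is trivial. Because $K_{Z\times E}$ is already trivial, the order of $K_S$ in $\mathrm{Pic}(S)$ equals the order of the character $\chi$ of $\mu_m$ on the one-dimensional space $H^0(Z,K_Z)\otimes H^0(E,K_E)$. Translation preserves the invariant $1$-form on $E$, so the $E$-factor of $\chi$ is trivial, reducing the problem to computing the character of $\mu_m$ on $H^0(X',K_{X'})$ (which agrees with that on $H^0(Z,K_Z)$ via the terminalization).

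I would compute this character via the Poincar\'e residue description of $K_{X'}$ as $\mathrm{Res}(\Omega/f)$, where $\Omega=dx_0\wedge\cdots\wedge dx_n$ is the Euler form on the affine cone over $X'$. Summing the weights from \eqref{groupaction} and applying the Sylvester identity gives
\[
\sum_{i=0}^{n}w_i \;=\; \tfrac{d}{2}\!\left(\tfrac{1}{s_0}+\cdots+\tfrac{1}{s_{n-2}}+1\right) \;=\; 2s_{n-1}-3.
\]
Each monomial of $f$ has weight $d/2 = s_{n-1}-1$ modulo $m$: this is immediate for the summands $x_i^{s_i}$ and $x_{n-1}^{d-1}x_n$, while for $x_n^d$ one uses the key congruence $d(d-1)/2=m$, which forces $d^2/2\equiv d/2\pmod m$. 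Hence $\chi$ sends $\zeta$ to $\zeta^{(2s_{n-1}-3)-(s_{n-1}-1)}=\zeta^{s_{n-1}-2}$, and two applications of the Euclidean algorithm ($\gcd(s_{n-1}-2,s_{n-1}-1)=1$ and $\gcd(s_{n-1}-2,2s_{n-1}-3)=1$) show that $\chi$ has exact order $m$. The main obstacle I anticipate is precisely this weight calculation, in particular verifying that the anomalous monomial $x_n^d$ still contributes weight $d/2$ modulo $m$; the argument rests on the coincidence $d(d-1)/2=m$.
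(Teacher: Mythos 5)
Your proposal is correct, and its first three steps (freeness of the action, triviality of $K_{Z\times E}$, descent of terminality through the \'etale quotient, and the reduction of the index to the order of the $\mu_m$-character on $H^0(Z,K_Z)\otimes H^0(E,K_E)$ with the $E$-factor trivial) coincide with the paper's argument. The difference is in how the order of that character is pinned down: the paper simply identifies it with the index of the quotient stack $[Z/\mu_m]$ and cites Proposition \ref{largeindex} (whose proof the paper omits as ``a different description of the same example'' as Theorem \ref{klt-index}), whereas you compute the character directly from the weights in \eqref{groupaction} via the residue description of the canonical form. Your computation checks out: the weights sum to $\tfrac{d}{2}\bigl(\tfrac{1}{s_0}+\cdots+\tfrac{1}{s_{n-2}}+1\bigr)=d-1=2s_{n-1}-3$ by the Sylvester identity, every monomial of $f$ has weight $\tfrac{d}{2}=s_{n-1}-1$ modulo $m$ (with $x_n^d$ handled exactly by $d(d-1)/2=m$), and $\gcd(s_{n-1}-2,\,(s_{n-1}-1)(2s_{n-1}-3))=1$, so the character $\zeta\mapsto\zeta^{s_{n-1}-2}$ has exact order $m$. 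What your route buys is self-containedness: it supplies the verification that the paper delegates to the unproved Proposition \ref{largeindex}, at the cost of an explicit weight calculation; the paper's version is shorter but only as strong as that earlier citation. One cosmetic point: since $S$ is terminal but $K_S$ need not be Cartier, you should speak of the order of $K_S$ in the divisor class group (equivalently the smallest $k$ with $kK_S\sim 0$) rather than in $\Pic(S)$; this does not affect the argument.
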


\begin{proof}
The variety $X'$ is canonical, by Proposition \ref{wpsindex}.
So $K_Z=\pi^*(K_{X'})$ by Proposition \ref{equivterm},
and hence $Z$ is a terminal projective
variety with $K_Z$ trivial.
Since $E$ is a complex elliptic curve, it has a point $p$ of order $m$.
Translation by an non-identity point has no fixed points on $E$, so $\mu_m$ acts freely on $E$. It follows
that $\mu_m$ acts freely on $Z\times E$, and so the quotient
$S=(Z\times E)/\mu_m$ is a Calabi-Yau variety (rather than a pair).
Since $Z$ is terminal
and $\mu_m$ acts freely on $Z\times E$,
the quotient $S=(Z\times E)/\mu_m$ is also terminal.

It remains to see that $S$ has index $m$.  The group $\mu_m$ acts trivially on $H^0(E,K_E)$ since global sections of $K_E$ are translation-invariant.  Therefore, an element of $\mu_m$ acts trivially on $H^0(Z \times E, K_{Z \times E})
=H^0(Z,K_{Z})\otimes_{\C} H^0(E,K_E)$ if and only if it acts trivially on $H^0(Z,K_{Z})$.  So the index of $S$ is equal to the index of the quotient stack $[Z/\mu_m]$
(or the associated klt pair), namely $m$.
\end{proof}

We conjecture that this example is optimal.

\begin{conjecture}
\label{can-term-conj}
Let $n$ be an integer at least $2$ and let $X$ be the Calabi-Yau variety of Corollary \ref{can-term-ex}.  Then the index $m = (s_{n-1}-1)(2s_{n-1}-3)$ of $X$ is the largest possible index among all terminal Calabi-Yau varieties of dimension $n$, and also among all canonical Calabi-Yau varieties
of dimension $n$.
\end{conjecture}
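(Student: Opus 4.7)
The plan is to generalize the dimension-two argument of Proposition \ref{dim2pair} via the global index-one cover. Let $X$ be a canonical (or terminal) Calabi-Yau $n$-fold of index $m$, and let $\pi\colon Y \to X$ be its index-one cover. Then $Y$ is canonical with $K_Y \sim 0$, and $\mu_m$ acts on $Y$ purely non-symplectically (that is, faithfully on $H^0(Y,K_Y) \cong \C$) and freely in codimension one, since $X$ is a variety rather than only a pair. By Proposition \ref{equivterm} I may replace $Y$ by a $\mu_m$-equivariant terminalization, with $K_Y$ still trivial. The conjecture thus reduces to bounding $m$ for purely non-symplectic, codimension-one-free cyclic actions on terminal Calabi-Yau $n$-folds with trivial canonical class, together with an identification of the extremal case.

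Next, I would invoke the Beauville-Bogomolov decomposition theorem for klt Calabi-Yau varieties of Druel-Greb-Guenancia-H\"oring-Kebekus-Peternell: after a finite quasi-\'etale cover $\widetilde Y \to Y$, there is a product decomposition
\[
\widetilde Y \cong A \times \prod_j Z_j \times \prod_k H_k,
\]
where $A$ is an abelian variety, each $Z_j$ is a strict Calabi-Yau variety, and each $H_k$ is an irreducible holomorphic symplectic variety. After a further finite cyclic extension, $\mu_m$ lifts to $\widetilde Y$, and a finite-index subgroup of it preserves each factor (using that $\mu_m$ is cyclic). The character of $\mu_m$ on $H^0(K_{\widetilde Y})$ is the product of its characters on the individual factors, and translations on the abelian factor act trivially on $H^0(K_A)$. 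The configuration realized by Corollary \ref{can-term-ex} has $A$ an elliptic curve equipped with an $m$-torsion translation together with a single strict Calabi-Yau factor of dimension $n-1$ carrying a purely non-symplectic $\mu_m$-action; the quotient of that Calabi-Yau factor is then a klt Calabi-Yau pair in dimension $n-1$ of index $m$, and Conjecture \ref{kltpairconj} in dimension $n-1$ would give $m \leq (s_{n-1}-1)(2s_{n-1}-3)$, matching the construction.

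All other configurations should yield smaller bounds: enlarging the abelian factor merely trims the Calabi-Yau dimension available, while a configuration without any abelian factor forces a purely non-symplectic, codimension-one-free cyclic action directly on a Calabi-Yau/hyperk\"ahler product of total dimension $n$, which one would bound by known Hodge-theoretic estimates on non-symplectic automorphism orders (Beauville, Boissi\`ere-Sarti, Mongardi, Oguiso, and others). The main obstacle is the inductive step: Conjecture \ref{kltpairconj} is currently established only for $n-1 \leq 2$ (via the K3 classification of Nikulin and Machida-Oguiso invoked in Proposition \ref{dim2pair}), and no sharp Sylvester-type bound on orders of purely non-symplectic cyclic automorphisms of strict Calabi-Yau or irreducible holomorphic symplectic varieties is presently known in dimension $\geq 3$. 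Modulo such a dimension-by-dimension classification, the decomposition argument above reduces Conjecture \ref{can-term-conj} in dimension $n$ to its klt-pair counterpart in dimension $n-1$.
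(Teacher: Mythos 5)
This statement is a \emph{conjecture} in the paper, not a theorem: the authors give no proof in general dimension, and only observe that it holds for $n \le 3$ by citing Beauville's classification of bielliptic surfaces and Machida--Oguiso's bound of $66$ for non-symplectic automorphisms of K3 surfaces. Your proposal is likewise not a proof, and you concede as much in your final sentences: the entire argument is conditional on Conjecture \ref{kltpairconj} in dimension $n-1$ (open for $n-1 \ge 3$) and on sharp, currently nonexistent bounds for orders of purely non-symplectic cyclic automorphisms of strict Calabi--Yau and irreducible holomorphic symplectic varieties in dimension at least $3$. So at best you have proposed a conditional reduction, not a proof of the conjecture.

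Beyond that, several steps of the reduction itself have gaps. First, after replacing $Y$ by a $\mu_m$-equivariant terminalization $Z$ (Proposition \ref{equivterm}), the action need no longer be free in codimension one: exceptional divisors of $Z \to Y$ can be fixed by nontrivial subgroups, so the quotient becomes a klt Calabi--Yau \emph{pair} rather than a variety. This is exactly the phenomenon the paper must track carefully in the surface case (Proposition \ref{dim2}, where the index-$13$ example has a fixed rational curve on the minimal resolution), and your plan silently drops it. Second, $\mu_m$ does not in general lift to the quasi-\'etale cover $\widetilde Y$ of the singular decomposition theorem; one only obtains a possibly larger finite group acting on $\widetilde Y$ with the correct quotient, and an automorphism of an abelian factor is a translation composed with a group automorphism whose action on $H^0(K_A)$ need not be trivial, so the character bookkeeping is more delicate than stated. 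Third, the claim that ``all other configurations should yield smaller bounds'' is asserted with no argument; even granting Conjecture \ref{kltpairconj}, ruling out, say, a purely non-symplectic action on an $n$-dimensional strict Calabi--Yau factor with no abelian part would require precisely the kind of higher-dimensional classification that does not exist. The reduction is a plausible heuristic consistent with how the paper's example is built, but it does not constitute a proof, and the paper offers none to compare it against.
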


This conjecture holds in dimensions at most $3$.  Indeed, in dimension $2$, terminal singularities are smooth, and the largest possible
index of a smooth Calabi-Yau surface is $6 = (s_1 - 1)(2s_1 - 3)$,
which occurs for a ``bielliptic'' surface $(W\times E)/\mu_6$
as above \cite[Corollary VIII.7]{Beauville}.
Likewise, the largest possible index of a canonical or terminal
Calabi-Yau 3-fold is $66 = (s_2-1)(2s_2-3)$ \cite[Corollary 5]{MO}.

For $n$ at most $3$, there is in fact a {\it smooth }Calabi-Yau
$n$-fold with index $(s_{n-1}-1)(2s_{n-1}-3)$
\cite[Corollary 5]{MO}.
To see this in dimension $3$, let $X'$ be the canonical Calabi-Yau surface
from Proposition \ref{largeindex} with an action of $\mu_{66}$.
Let $Z\to X'$ be a $\mu_{66}$-equivariant terminalization
of $X'$ (Proposition \ref{equivterm}). In dimension $2$, $Z$ is smooth
and unique, known as the minimal resolution of $X'$.
Also, $K_Z=\pi^*(K_{X'})$, and so $K_Z$ is trivial (more precisely,
$Z$ is a smooth K3 surface).
Then the quotient $(Z\times E)/\mu_{66}$ as in Corollary
\ref{can-term-ex} is a smooth Calabi-Yau $3$-fold of index $66$.
It would be interesting to construct smooth Calabi-Yau varieties
of large index in high dimensions.

\section{Calabi-Yau varieties with large Betti numbers}
\label{cohosection}

The Calabi-Yau hypersurfaces in section \ref{pairsection}
are also notable for their topological properties.
In dimension $3$, the family
of hypersurfaces $X'_{84}\subset
\mathbb{P}^4(42,28,12,1,1)$ of Proposition \ref{wpsindex}
is an extreme in Kreuzer and Skarke's famous list
of Calabi-Yau 3-folds \cite[section 3]{KS}. (They found
more than $400$ million canonical 3-folds with trivial canonical class.)
In particular, a crepant
resolution of $X'$ has the most negative known Euler characteristic, $-960$.
(The special hypersurface
in Proposition \ref{wpsindex} with an action of
$\mu_{3486}$ seems to be new, though.)
The BHK mirror to the special hypersurface $X'$
is the small-volume example
$$\widehat{X'_{3486}}\subset \mathbb{P}^4(1743,1162,498,42,41).$$
That has the largest known Euler
characteristic among $K$-trivial 3-folds, $960$ (for a crepant resolution).

A third hypersurface, $X'_{1806} \subset \mathbb{P}^4(903,602,258,42,1)$,
has the same sum of Betti numbers (of a crepant resolution)
as in the previous two examples, namely $1008$. All three of these hypersurfaces 
are visible at the top of
the graph of all pairs of Hodge numbers from Kreuzer and Skarke's list,
shown in Figure \ref{BasicKSplot} \cite{Candelas}.  
\begin{figure}[!t]
\begin{center}
\includegraphics[width=3.0in]{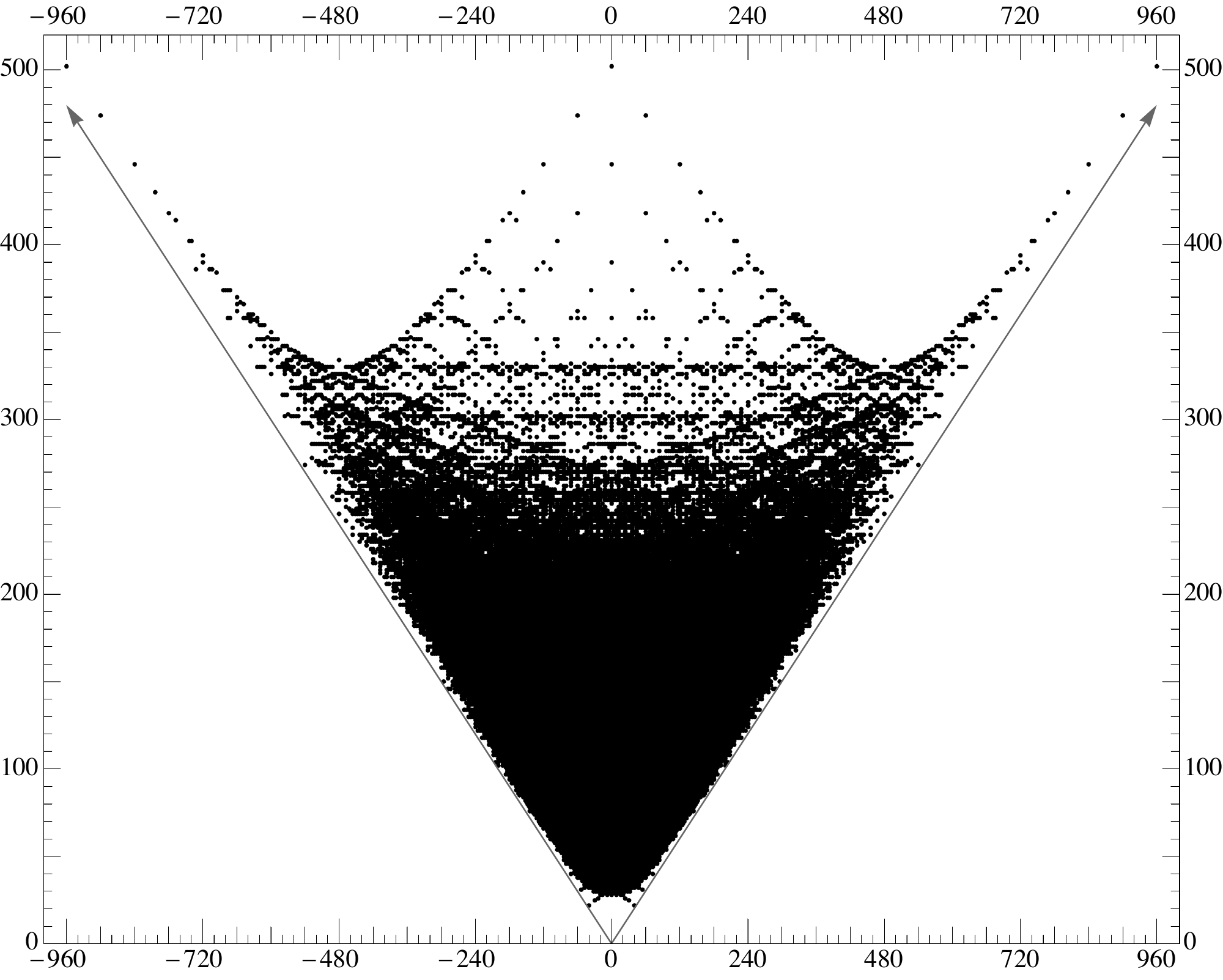} % Was 6.5in.
\vskip0pt
\capt{6.0in}{BasicKSplot}{Hodge numbers of the Calabi-Yau 3-folds found by Kreuzer and Skarke; $x$-axis is $\chi=2(h^{1,1}-h^{2,1})$, $y$-axis is $h^{1,1}+h^{2,1}$.}
\end{center}
\vspace*{-15pt}
\end{figure}
In this section, we'll show that these 
observations in dimension $3$ fit into a pattern that holds
for all dimensions $n$. 

In higher dimensions, we do not know whether our weighted projective
hypersurfaces have crepant resolutions. Instead, we'll 
consider Chen-Ruan's {\it orbifold cohomology} for these hypersurfaces
\cite{CR}. For a projective variety with Gorenstein quotient singularities,
orbifold cohomology (as a Hodge structure) agrees
with the cohomology of a crepant
resolution if one exists \cite[Corollary 1.5]{Yasuda}.
Assuming Gorenstein quotient singularities,
the orbifold Hodge numbers also agree with
Batyrev's {\it stringy Hodge numbers }\cite[Remark 1.4]{Yasuda}.

We'll make use of the following formula for orbifold Hodge numbers,
building on Vafa's ideas
(see \cite[(4.2)]{Batyrev}).  Given a quasi-smooth hypersurface $X$ of degree $d$ in 
$\mathbb{P}(a_0,\ldots,a_{n+1})$, where $a_0 + \cdots + a_{n+1} = d$, we write: $q_i \coloneqq a_i/d$ and $\tilde{\theta}_i(\ell)$ 
is the fractional part of $\ell a_i/d$.  Then the orbifold Hodge number $h_{\mathrm{orb}}^{p,q}$ 
of the mirror of $X$ is the coefficient of $t^p \bar{t}^q$ in the expression

\begin{equation}
\label{Hodgeformula}
P(t,\bar{t}) = \sum_{\ell = 0}^{d-1}\left[ \prod_{\tilde{\theta}_i(\ell) = 0} \frac{1 - (t \bar{t})^{1-q_i}}{1-(t \bar{t})^{q_i}} \prod_{\tilde{\theta}_i(\ell) \neq 0} (t \bar{t})^{\frac{1}{2}-q_i}\left(\frac{t}{\bar{t}}\right)^{\tilde{\theta}_i(\ell) - \frac{1}{2}}\right].
\end{equation}

In \cite{Batyrev}, $P(t,\bar{t})$ is defined as the part of 
the above expression with integer exponents, but it will be
useful to keep track of the parts with fractional exponents as well.

For each dimension $n$, there are three Calabi-Yau 
hypersurfaces which we expect to have the ``most extreme cohomology":
\begin{align*}
    X_1^{(n)} & \coloneqq \{x_0^{s_0} + x_1^{s_1} + \cdots + x_{n-1}^{s_{n-1}} + x_n^{d-1} x_{n+1} + x_{n+1}^d = 0\} \\ 
     & \subset \mathbb{P}(d/s_0,\ldots,d/s_{n-1},1,1) \text{ of degree } d = 2s_{n}-2,  \\
    X_2^{(n)} & \coloneqq \{x_0^{s_0} + \cdots + x_n^{s_n} + x_{n+1}^d = 0\} \\
     & \subset \mathbb{P}(d/s_0,\ldots,d/s_n,1) \text{ of degree } d = s_{n+1}-1, \text{ and} \\
    X_3^{(n)} & \coloneqq \{x_0^{s_0} + x_1^{s_1} + \cdots + x_{n-1}^{s_{n-1}} + x_n^{2s_n-3} + x_n x_{n+1}^{2s_n-2} = 0\} 
    \\ & \subset \mathbb{P}(d/s_0,\ldots,d/s_{n-1},s_n-1,s_n-2) \text{ of degree } d = (s_n-1)(2s_n-3).
\end{align*}
Note that any quasi-smooth hypersurface with the same degree and weights as one of
the examples above would have identical cohomology, but we pick specific 
hypersurfaces to highlight the connection to previous sections.
The first and third examples form a mirror pair and are the source of our
large index and small volume results, respectively (see Remark \ref{mirror}).
Hypersurfaces of degree $d = s_{n+1}-1$ in $\mathbb{P}(d/s_0,\ldots,d/s_n,1)$
first appeared in \cite[Lemma 3.10]{ETW}.  The specific Fermat hypersurface
$X_2^{(n)}$ is self-mirror and can be viewed as the source of our conjecturally optimal small mld 
example.  Indeed, the pair $(X,D)$ in \ref{mldex} is the 
quotient of this hypersurface by an action of $\mu_{s_{n+1}-1}$.

\begin{theorem}
\label{Hodgenumbers}
Let $n$ be a positive integer. Then the following properties hold:
\begin{enumerate}
    \item[(i)] $h^{p,q}_{\mathrm{orb}}(X_i^{(n)}) = 0$ for $i = 1,2,3$ unless $p = q$ or $p + q = n$.
    \item[(ii)] The sum of the orbifold Betti numbers of $X_i^{(n)}$ for $i = 1,2,3$ is 
    $$H = 2(s_0-1) \cdots (s_n-1).$$ 
    This equals the orbifold Euler characteristic of $X$ if $n$ is even.
    \item[(iii)] Suppose that $n$ is odd.  Then the dimension of the middle orbifold cohomology group is
    $$\dim(H_{\mathrm{orb}}^n(X_i^{(n)},\Q)) = 
    \begin{cases}
    (s_0 - 1) \cdots (s_{n-1}-1)(2s_n-4), & i = 1, \\
    (s_0-1) \cdots (s_n-1), & i = 2, \\
    2(s_0 - 1) \cdots (s_{n-1}-1), & i = 3.
    \end{cases} $$
    The orbifold Euler characteristic is 
    $$\chi_{\mathrm{orb}}(X_i^{(n)}) = 
     \begin{cases}
    -(s_0 - 1) \cdots (s_{n-1}-1)(2s_n-6), & i = 1, \\
    0, & i = 2, \\
    (s_0 - 1) \cdots (s_{n-1}-1)(2s_n-6), & i = 3.
    \end{cases} 
    $$
\end{enumerate}
\end{theorem}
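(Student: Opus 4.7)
I would apply the Batyrev--Vafa formula \eqref{Hodgeformula} to each hypersurface and extract the integer-exponent part of $P(t,\bar t)$, which yields the honest orbifold Hodge numbers. Two preliminary observations simplify the work: $X_1^{(n)}$ and $X_3^{(n)}$ are BHK mirrors (Remark~\ref{mirror}), and $X_2^{(n)}$ is Fermat-type hence self-mirror. Thus $h^{p,q}_{\mathrm{orb}}(X_1^{(n)}) = h^{n-p,q}_{\mathrm{orb}}(X_3^{(n)})$ preserves the vanishing pattern of~(i), the total Betti number of~(ii), and (up to $p \leftrightarrow n-p$) the structure of the Hodge diamond.

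For part~(i), I analyze each sector $\ell \in \{0, \ldots, d-1\}$ of the sum in~\eqref{Hodgeformula}. The $\ell = 0$ sector contributes the untwisted product $\prod_i (1 + u_i + \cdots + u_i^{b_i - 2})$ with $u_i = (t\bar t)^{q_i}$ and $b_i = 1/q_i$; this is a polynomial in $t \bar t$, so its integer-exponent part lies on the diagonal $p = q$. For $\ell \neq 0$, let $S(\ell) = \{i : \tilde\theta_i(\ell) \neq 0\}$; the twisted factor is a single monomial $(t\bar t)^A (t/\bar t)^B$ with
\[
A = \tfrac{|S|}{2} - \sum_{i \in S} q_i, \qquad B = \sum_{i \in S} \tilde\theta_i(\ell) - \tfrac{|S|}{2}.
\]
Multiplied by a monomial $(t\bar t)^\kappa$ from the untwisted product, the total is $t^{A+\kappa+B} \bar t^{A+\kappa - B}$. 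Using the Calabi--Yau identity $\sum_i q_i = 1$ I rewrite $A = |S|/2 - 1 + \sum_{i \notin S} q_i$; the key claim is that whenever both $A+\kappa+B$ and $A + \kappa - B$ are integers, then $A + \kappa = n/2$, so the contribution lies on the antidiagonal $p + q = n$.

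For parts~(ii) and~(iii), I sum the integer-exponent coefficients sector by sector. The diagonal sum $\sum_p h^{p,p}_{\mathrm{orb}}$ comes from the $\ell = 0$ sector (together with possible $\ell \neq 0$ sectors landing on $p = q = n/2$ when $n$ is even), and the antidiagonal sum $\sum_{p+q=n} h^{p,q}_{\mathrm{orb}}$ comes from the remaining integer-exponent contributions. Using the Sylvester identities $s_{n+1} - 1 = s_0 \cdots s_n$ and $\sum_{i=0}^{n} 1/s_i = 1 - 1/(s_{n+1}-1)$ together with pairwise coprimality of the $s_i$ (which allows Chinese-remainder counting of $\ell$'s with specified divisibility patterns), both sums evaluate. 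The total yields $H = 2(s_0 - 1) \cdots (s_n - 1)$ in all three cases. For odd $n$, the diagonal and antidiagonal are disjoint, so $\dim H^n_{\mathrm{orb}} = H - (\text{diagonal sum})$ and $\chi_{\mathrm{orb}} = (\text{diagonal sum}) - (\text{antidiagonal sum})$. The three hypersurfaces differ in their weight data, and hence in the $\ell = 0$ untwisted products, which is what produces the three distinct formulas in part~(iii).

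The main obstacle is the key claim in part~(i): sector by sector, every integer-exponent contribution from $\ell \neq 0$ lies on $p + q = n$. This is an integrality condition on $(\kappa, A, B)$ which rests delicately on the coprimality of the Sylvester denominators and the Calabi--Yau relation $\sum_i q_i = 1$. Once it is established, the remaining combinatorial sums for parts~(ii) and~(iii) reduce to a routine expansion indexed by divisibility patterns of $\ell$.
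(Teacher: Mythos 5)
Your overall strategy coincides with the paper's: apply the Batyrev--Vafa formula \eqref{Hodgeformula} sector by sector, use BHK mirror symmetry to transfer everything from $X_1^{(n)}$ to $X_3^{(n)}$ and the self-mirror property of $X_2^{(n)}$, observe that the $\ell=0$ sector is a polynomial in $t\bar t$ (hence lands on $p=q$) while the fully twisted sectors are single monomials on $p+q=n$, and then count. The problem is that the one step you defer --- your ``key claim'' that every integer-exponent contribution from a sector $\ell\neq 0$ lies on the antidiagonal --- is precisely the mathematical content of the theorem, and you neither prove it nor correctly estimate its difficulty. In the mixed sectors (those $\ell\neq 0$ for which some but not all of the relevant $\tilde\theta_i(\ell)$ vanish), the sector is a nontrivial polynomial in $t\bar t$ times a monomial with $p-q\neq 0$ fixed, so its terms lie on a line parallel to the diagonal; if any of them had integer exponents, statement (i) would simply be false. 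Your claim is therefore only \emph{vacuously} true in these sectors, and establishing that vacuity is not a ``routine expansion'': it is where the paper's Lemma \ref{counting} lives. The paper does not argue integrality directly at all. Instead it proves the inductive identity $\sum_{j=0}^{d-1}\prod_{c\in C}f_c(j)=0$ for any nonempty pairwise coprime set $C$ of divisors of $d$, deduces (Lemma \ref{elltermvanishing}) that the total contribution $S_\ell$ of every mixed sector to the integer-exponent coefficients is zero, and then uses the fact that each factor $\frac{1-(t\bar t)^{1-q_i}}{1-(t\bar t)^{q_i}}$ expands with nonnegative coefficients (because $a_i\mid d$) to conclude that a sector with vanishing total contributes nothing anywhere.

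A second, related gap: even granting part (i), your plan for (ii) and (iii) via ``Chinese-remainder counting of $\ell$'s'' only accounts for the number of fully twisted sectors, i.e.\ the quantity the paper calls $B$. The diagonal contribution $A=S_0$ is not a counting statement; its evaluation ($A=d(s_0-1)\cdots(s_n-1)$ for $X_2^{(n)}$, $A=(s_0-1)\cdots(s_{n-1}-1)d(d-2)$ for $X_1^{(n)}$) again runs through the identity of Lemma \ref{counting} applied to $\{s_0,\dots,s_n\}$ or $\{s_0,\dots,s_{n-1}\}$. So the same combinatorial lemma is needed twice, and without stating and proving it (or an equivalent), neither the vanishing in (i) nor the closed forms in (ii)--(iii) are actually established. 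I would recommend reorganizing the argument around that identity rather than around a direct integrality analysis of $(\kappa,A,B)$, which for the mixed sectors would be substantially messier than the paper's route.
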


\begin{remark}
The proof will actually show a slight generalization of (i): the stated vanishing of orbifold Hodge numbers holds for any quasi-smooth Calabi-Yau weighted hypersurface $X_d \subset \mathbb{P}(a_0,\ldots,a_{n+1})$ satisfying the conditions that a) each $a_i$ divides $d$ and b) the quotients $d/a_i$ that are strictly smaller than $d$ are pairwise coprime.  Mirror symmetry preserves the vanishing, so it will also hold for mirrors of such hypersurfaces, e.g., $X_3^{(n)}$. However, it does not hold in general.  A simple counterexample is the fourfold $X_{12} \subset \mathbb{P}(3^{(3)},1^{(3)})$, which has $h^{1,2} = 3$.
\end{remark}

The sum $H = 1008$ of Betti numbers common to our three examples in dimension $3$ is the largest known among all Calabi-Yau threefolds (a crepant resolution always exists in this case). The situation in dimension $3$ motivates the following conjecture:

\begin{conjecture}
\label{Hodgesumconj}
In dimension $n$,
the largest possible sum of orbifold Betti numbers for a projective
variety with quotient singularities and trivial canonical class is
$$H = 2(s_0-1) \cdots (s_n-1).$$  
For odd $n$, the smallest possible orbifold Euler characteristic is 
$-(s_0-1) \cdots (s_{n-1}-1)(2s_n-6)$, and the largest is $(s_0-1) \cdots (s_{n-1}-1)(2s_n-6)$.
\end{conjecture}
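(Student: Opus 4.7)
The plan is to reduce Conjecture \ref{Hodgesumconj} to a combinatorial optimization over weighted projective data, in two main steps.

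First, I would try to show that any variety $X$ achieving the maximum in Conjecture \ref{Hodgesumconj} is crepant birational to a quasi-smooth Calabi-Yau hypersurface $X_d \subset \mathbb{P}(a_0,\ldots,a_{n+1})$, or at least to a stacky Calabi-Yau hypersurface in a simplicial toric variety. Since orbifold Hodge numbers agree with stringy Hodge numbers and are invariant under crepant birational equivalence (Yasuda, as cited in the excerpt), one is free to modify $X$ birationally. A natural attack combines (i) boundedness of Calabi-Yau varieties with bounded index (known in low dimension via the Index Conjecture \ref{indexconj}, conjectural in general), (ii) a $G$-equivariant MMP on the index-$1$ cover via Proposition \ref{equivterm}, and (iii) explicit control of the class of klt Calabi-Yau pairs with standard coefficients (by the results of Hacon-M\textsuperscript{c}Kernan-Xu cited in the introduction) to force extremal examples into a toric hypersurface form.

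Second, for quasi-smooth Calabi-Yau hypersurfaces $X_d \subset \mathbb{P}(a_0,\ldots,a_{n+1})$, I would analyze the Vafa-Batyrev formula \eqref{Hodgeformula} directly. Summing $h^{p,q}_{\mathrm{orb}}$ over all $p,q$ converts \eqref{Hodgeformula} into a combinatorial quantity in the weights $a_i$ and degree $d$. The Calabi-Yau constraint $\sum a_i = d$, equivalently $\sum q_i = 1$ with $q_i = a_i/d$, is precisely the constraint under which the Sylvester decomposition $1 = 1/s_0 + \cdots + 1/s_{n-1} + 1/(s_n-1)$ extremizes products of denominators. The leading contribution of \eqref{Hodgeformula} to $H$ is expected to be controlled by products of the form $\prod (1-q_i)/q_i$ (from the Poincar\'e-polynomial-type factors), which are maximized by the Sylvester weights; a careful bookkeeping should yield $H \leq 2\prod_{i=0}^n(s_i-1)$. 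The same formula with signs tracked controls $\chi_{\mathrm{orb}}$ when $n$ is odd: by Theorem \ref{Hodgenumbers}(i), the orbifold Hodge structure of the extremal examples concentrates on the lines $p = q$ and $p+q = n$, so $H$ and $|\chi_{\mathrm{orb}}|$ are simple linear combinations of the same building blocks.

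The hard part will be the first step: there is no known mechanism forcing a general Calabi-Yau with quotient singularities and trivial canonical class to be birational to a weighted projective hypersurface. Exotic candidates --- higher-codimension toric complete intersections, non-toric crepant resolutions, products, and fibrations --- could in principle have orbifold Betti sums we cannot immediately bound. Ruling them out in dimension $n \geq 4$ requires new boundedness results, such as boundedness of the moduli of Calabi-Yau varieties with bounded index and bounded orbifold Euler characteristic, which are open in general. The Kreuzer-Skarke census in dimension $3$, where the three hypersurface families of Theorem \ref{Hodgenumbers} sit atop the cloud of Hodge numbers at $H = 1008$, provides strong empirical support for the conjecture, but promoting this observation to a proof in higher dimension lies beyond currently available techniques.
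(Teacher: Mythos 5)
This statement is a conjecture, not a theorem: the paper offers no proof of it. What the paper actually proves (Theorem \ref{Hodgenumbers}) is that the three hypersurface families $X_1^{(n)}, X_2^{(n)}, X_3^{(n)}$ \emph{achieve} the values $H = 2(s_0-1)\cdots(s_n-1)$ and $\chi_{\mathrm{orb}} = \pm(s_0-1)\cdots(s_{n-1}-1)(2s_n-6)$; the claim that these are extremal among all projective varieties with quotient singularities and trivial canonical class is left open, supported only by the Kreuzer--Skarke data in dimension $3$. So there is no ``paper's own proof'' to compare against, and your proposal should be judged as a research program rather than a proof.

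As a program, it has two genuine gaps, one of which you candidly acknowledge. The first step --- reducing an arbitrary extremal $K$-trivial variety with quotient singularities to a quasi-smooth weighted hypersurface up to crepant birational equivalence --- has no available mechanism; boundedness of index, equivariant MMP, and the HMX finiteness of coefficients do not force a toric hypersurface structure, and products, complete intersections, and non-toric examples are not excluded. The second step is also incomplete as stated: even restricted to quasi-smooth Calabi-Yau weighted hypersurfaces, the sum $H$ extracted from the Vafa--Batyrev formula \eqref{Hodgeformula} is not simply $\prod_i (1-q_i)/q_i$; the paper's own computation shows $H$ is assembled from sums $S_\ell$ over twisted sectors whose evaluation depends delicately on divisibility among the $d/a_i$ (Lemmas \ref{counting} and \ref{elltermvanishing}), and no upper bound over all admissible weight systems is established. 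The heuristic that the Sylvester decomposition of $1$ into unit fractions maximizes the product of denominators is correct as a statement about unit fractions, but transferring it to a bound on $H$ would require controlling the sectors with $\tilde\theta_i(\ell)\neq 0$ for arbitrary (not necessarily pairwise-coprime, not necessarily divisor) weights, which is exactly what the paper does not attempt. In short: the proposal correctly identifies why the conjecture is hard, but neither step is carried out, and the paper itself does not carry them out either.
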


The individual orbifold Hodge numbers are harder to compute, but the values for our examples in dimensions $3$ and $4$ are shown in Figure \ref{Hodgenum3} and Figure \ref{Hodgenum4}, respectively.

\begin{figure}
\centering
\begin{subfigure}{.33\textwidth}
  \centering
{\small \[
\setlength{\arraycolsep}{0.3em}
\begin{array}{ccccccccc}
&&&&1&&&& \\[2mm]
&&&0&&0&&& \\[2mm]
&& 0&& 11 && 0 && \\ [2mm]
& 1 && 491 && 491 && 1 & \\[2mm]
&& 0&& 11 && 0 && \\[2mm]
&&&0&&0&&&\\[2mm]
&&&&1&&&& \\[2mm]
\end{array}
\]}
  \caption{$X_1^{(3)}$}
  \label{Hodgenum3:1}
\end{subfigure}%
\begin{subfigure}{.33\textwidth}
  \centering
{\small \[
\setlength{\arraycolsep}{0.3em}
\begin{array}{ccccccccc}
&&&&1&&&& \\[2mm]
&&&0&&0&&& \\[2mm]
&& 0&& 251 && 0 && \\ [2mm]
& 1 && 251 && 251 && 1 & \\[2mm]
&& 0&& 251 && 0 && \\[2mm]
&&&0&&0&&&\\[2mm]
&&&&1&&&& \\[2mm]
\end{array}
\]}
  \caption{$X_2^{(3)}$}
  \label{Hodgenum3:2}
\end{subfigure}
\begin{subfigure}{.33\textwidth}
  \centering
{\small \[
\setlength{\arraycolsep}{0.3em}
\begin{array}{ccccccccc}
&&&&1&&&& \\[2mm]
&&&0&&0&&& \\[2mm]
&& 0&& 491 && 0 && \\ [2mm]
& 1 && 11 && 11 && 1 & \\[2mm]
&& 0&& 491 && 0 && \\[2mm]
&&&0&&0&&&\\[2mm]
&&&&1&&&& \\[2mm]
\end{array}
\]}
  \caption{$X_3^{(3)}$}
  \label{Hodgenum3:3}
\end{subfigure}
\caption{The orbifold Hodge diamonds of three extreme Calabi-Yau $3$-folds.}
\label{Hodgenum3}
\end{figure}

\begin{figure}
\centering
\begin{subfigure}{.55 \textwidth}
  \centering
{\small \[
\setlength{\arraycolsep}{0.4em}
\begin{array}{ccccccccccc}
&&&&&1&&&&& \\[2mm]
&&&&0&&0&&&& \\[2mm]
&&& 0&& 252 && 0 &&& \\ [2mm]
&& 0 && 0 && 0 && 0 && \\[2mm]
& 1 && 303148 && 1213644 && 303148 && 1 & \\ [2mm]
&& 0 && 0 && 0 && 0 && \\ [2mm]
&&& 0&& 252 && 0 &&& \\[2mm]
&&&&0&&0&&&&\\[2mm]
&&&&&1&&&&&\\[2mm]
\end{array}
\]}
  \caption{$X_1^{(4)}$}
  \label{Hodgenum4:1}
\end{subfigure}%
\begin{subfigure}{.45\textwidth}
  \centering
{\small \[
\setlength{\arraycolsep}{0.4em}
\begin{array}{ccccccccccc}
&&&&&1&&&&& \\[2mm]
&&&&0&&0&&&& \\[2mm]
&&& 0&& 151700 && 0 &&& \\ [2mm]
&& 0 && 0 && 0 && 0 && \\[2mm]
& 1 && 151700 && 1213644 && 151700 && 1 & \\ [2mm]
&& 0 && 0 && 0 && 0 && \\ [2mm]
&&& 0&& 151700 && 0 &&& \\[2mm]
&&&&0&&0&&&&\\[2mm]
&&&&&1&&&&&\\[2mm]
\end{array}
\]}
  \caption{$X_2^{(4)}$}
  \label{Hodgenum4:2}
\end{subfigure}

\begin{subfigure}{.5\textwidth}
  \centering
{\small \[
\setlength{\arraycolsep}{0.4em}
\begin{array}{ccccccccccc}
&&&&&1&&&&& \\[2mm]
&&&&0&&0&&&& \\[2mm]
&&& 0&& 303148 && 0 &&& \\ [2mm]
&& 0 && 0 && 0 && 0 && \\[2mm]
& 1 && 252 && 1213644 && 252 && 1 & \\ [2mm]
&& 0 && 0 && 0 && 0 && \\ [2mm]
&&& 0&& 303148 && 0 &&& \\[2mm]
&&&&0&&0&&&&\\[2mm]
&&&&&1&&&&&\\[2mm]
\end{array}
\]}
  \caption{$X_3^{(4)}$}
  \label{Hodgenum4:3}
\end{subfigure}
\caption{The orbifold Hodge diamonds of three extreme Calabi-Yau $4$-folds.}
\label{Hodgenum4}
\end{figure}

\begin{proof}[Proof of Theorem \ref{Hodgenumbers}]
The properties of $X_1^{(n)}$ will imply those for $X_3^{(n)}$ by mirror symmetry  \cite[Theorem 4]{ChiodoRuan}; so we'll focus on the first two examples.  For the moment, let $X$ denote any quasi-smooth hypersurface with the property that all weights $a_i$ divide the degree $d$.

First, we'll make the substitutions $t = x^d$, $\bar{t} = y^d$ to eliminate fractional exponents in \eqref{Hodgeformula} and define a polynomial
$$Q(x,y)  \coloneqq  P(x^d,y^d) = \sum_{\ell = 0}^{d-1}\left[ \prod_{\tilde{\theta}_i(\ell) = 0} \frac{1 - (xy)^{d-a_i}}{1-(xy)^{a_i}} \prod_{\tilde{\theta}_i(\ell) \neq 0} (xy)^{\frac{d}{2}-a_i}\left(\frac{x}{y}\right)^{d \tilde{\theta}_i(\ell) - \frac{d}{2}}\right].$$
Write
\begin{equation}
\label{ellterm}
    Q_{\ell}(x,y)  \coloneqq  \prod_{\tilde{\theta}_i(\ell) = 0} \frac{1 - (xy)^{d-a_i}}{1-(xy)^{a_i}} \prod_{\tilde{\theta}_i(\ell) \neq 0} (xy)^{\frac{d}{2}-a_i}\left(\frac{x}{y}\right)^{d \tilde{\theta}_i(\ell) - \frac{d}{2}}
\end{equation}
for the $\ell$th term in the sum.

Since each $a_i$ divides $d$, we have that
$$\frac{1 - (xy)^{d-a_i}}{1-(xy)^{a_i}} = 1 + (xy)^{a_i} + \cdots + (xy)^{d-2a_i}.$$
The orbifold Hodge numbers of $X$ correspond to coefficients of monomials in $Q(x,y)$ of the form $x^a y^b$ with both $a$ and $b$ divisible by $d$.  First, we claim that for any monomial $x^a y^b$ with a nonzero coefficient in $Q(x,y)$, $d$ divides $a$ if and only if $d$ divides $b$.  This is because $Q(x,y)$ is a polynomial in $xy$ with the exception of the term $\left(\frac{x}{y}\right)^{d \tilde{\theta}_i(\ell) - \frac{d}{2}}$.  The difference of the exponents of $x$ and $y$ in this expression is $2d \tilde{\theta}_i(\ell) - d$.  But for any fixed $\ell$, we have
$$\sum_{i,\tilde{\theta}_i(\ell) \neq 0} (2d \tilde{\theta}_i(\ell) - d) \equiv \sum_{i,d \nmid \ell a_i} (2 \ell a_i -d) \equiv \ell \sum_i 2 a_i \equiv 0 \bmod d. $$
Here the second-to-last congruence holds because adding the terms $2 \ell  a_i$ for which $d | \ell a_i$ does not change the value modulo $d$ (removing extraneous $d$ terms also, of course, does not change this value).
Therefore, each summand $Q_{\ell}(x,y)$ is a polynomial in $xy$ multiplied by a monomial with the powers of $x$ and $y$ differing by a multiple of $d$.  This proves the claim.  In particular, the sum of all orbifold Hodge numbers is the same as the sum of coefficients of powers of $x^d$ in 
$$q(x)  \coloneqq  Q(x,1) = \sum_{\ell = 0}^{d-1}\left[ \prod_{\tilde{\theta}_i(\ell) = 0} (1 + x^{a_i} + \cdots + x^{d-2a_i}) \prod_{\tilde{\theta}_i(\ell) \neq 0} x^{d \tilde{\theta}_i(\ell)-a_i}\right].$$

To isolate powers of $x^d$, we can sum the values of this polynomial over $d$th roots of unity.  More precisely, let $H$ be the sum of Hodge numbers and $\zeta$ a primitive $d$th root of unity.  Then:
$$H = \frac{1}{d}\sum_{j = 0}^{d-1} q(\zeta^j) = \sum_{j = 0}^{d-1} \sum_{\ell = 0}^{d-1}\left[ \prod_{\tilde{\theta}_i(\ell) = 0} (1 + \zeta^{ja_i} + \cdots + \zeta^{j(d-2a_i)}) \prod_{\tilde{\theta}_i(\ell) \neq 0} \zeta^{j(d \tilde{\theta}_i(\ell)-a_i)}\right].$$

We may simplify this further by noticing, as before, that the sum of all the $d \tilde{\theta}_i(\ell)$ is divisible by $d$ for fixed $\ell$.  We can therefore drop the $d \tilde{\theta}_i(\ell)$ term in the last exponent. Further, since the sum of the $a_i$ is $d$, we have
$$\prod_{\tilde{\theta}_i(\ell) \neq 0} \zeta^{j(-a_i)} = \prod_{\tilde{\theta}_i(\ell) = 0} \zeta^{ja_i}.$$

After making this substitution, both products are now indexed over the weights $a_i$ for which $\tilde{\theta}_i(\ell) = 0$ (i.e., for which $d | \ell a_i)$.  Thus, we may combine them into one:
$$H = \frac{1}{d}\sum_{j = 0}^{d-1} \sum_{\ell = 0}^{d-1} \prod_{i, d | \ell a_i}(\zeta^{ja_i} + \zeta^{j(2a_i)} + \cdots + \zeta^{j(d-a_i)}).$$

The quantity $\zeta^{ja_i} + \zeta^{j(2a_i)} + \cdots + \zeta^{j(d-a_i)}$ equals $-1$ if $d \nmid ja_i$ and equals $\frac{d}{a_i} - 1$ if $d | ja_i$, since all terms will be equal to $1$ in this case.  We'll switch sums and write this as
\begin{equation*}
    H = \frac{1}{d}\sum_{\ell = 0}^{d-1} \sum_{j = 0}^{d-1} \prod_{i,d|\ell a_i}
\begin{cases}
\frac{d}{a_i} - 1, & d|ja_i, \\
-1, & d \nmid ja_i
\end{cases}.
\end{equation*}
Given a positive integer $c$, we'll use the notation
$$f_c(j) \coloneqq \begin{cases}
c - 1, & c|j, \\
-1, & c \nmid j
\end{cases}.$$
In this notation, 
\begin{equation}
\label{Hodgesum}
    H = \frac{1}{d}\sum_{\ell = 0}^{d-1} \sum_{j = 0}^{d-1} \prod_{i,d|\ell a_i}
f_{\frac{d}{a_i}}(j).
\end{equation}
The same reasoning shows that the contribution to $H$ coming from $Q_{\ell}(x,y)$ is $\frac{1}{d}\sum_{j = 0}^{d-1} \prod_{i,d|\ell a_i} f_{\frac{d}{a_i}}(j)$.  The following lemma will help to evaluate such sums.

\begin{lemma}
\label{counting}
Let $C$ be a nonempty finite set of pairwise coprime positive integers and $d$ a positive integer divisible by every element of $C$.  Then
$$\sum_{j = 0}^{d-1} \prod_{c \in C}
f_c(j) = 0.$$
\end{lemma}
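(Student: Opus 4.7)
The plan is to rewrite the local factors $f_c(j)$ additively and then expand the product, so that the sum over $j$ reduces to a combinatorial identity.

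First I would observe that $f_c(j) = c\cdot[c\mid j] - 1$, where $[\,\cdot\,]$ denotes the Iverson bracket. Expanding the product gives
\begin{equation*}
\prod_{c \in C} f_c(j) \;=\; \sum_{S \subseteq C} (-1)^{|C \setminus S|} \Bigl(\prod_{c \in S} c\Bigr) \prod_{c \in S}[c \mid j].
\end{equation*}
Because the elements of $C$ are pairwise coprime, $\prod_{c\in S}[c\mid j] = [n_S \mid j]$, where $n_S \coloneqq \prod_{c \in S} c$. Moreover $n_S$ divides $d$, again by pairwise coprimality together with the hypothesis that each $c \in C$ divides $d$.

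Now I would sum over $j = 0, 1, \ldots, d-1$. Since $n_S \mid d$, exactly $d/n_S$ values of $j$ in this range are divisible by $n_S$, so $\sum_{j=0}^{d-1}[n_S \mid j] = d/n_S$. Substituting gives
\begin{equation*}
\sum_{j=0}^{d-1}\prod_{c \in C} f_c(j) \;=\; \sum_{S \subseteq C} (-1)^{|C \setminus S|}\, n_S \cdot \frac{d}{n_S} \;=\; d \sum_{S \subseteq C} (-1)^{|C \setminus S|}.
\end{equation*}
Since $C$ is nonempty, the alternating sum $\sum_{S \subseteq C}(-1)^{|C \setminus S|}$ vanishes (it is $(1-1)^{|C|} = 0$), completing the proof.

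The argument is short once one spots the right rewriting; the main (very mild) obstacle is recognizing that additivity of $f_c$ and pairwise coprimality interact perfectly to turn the product of indicators into a single indicator, so that the $n_S$ factors cancel and leave a pure combinatorial alternating sum. No induction or detailed case analysis is needed.
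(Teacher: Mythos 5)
Your proof is correct, but it takes a genuinely different route from the paper's. You rewrite $f_c(j) = c\cdot[c\mid j]-1$, expand the product over subsets $S\subseteq C$, use pairwise coprimality to collapse $\prod_{c\in S}[c\mid j]$ into $[n_S\mid j]$ with $n_S=\prod_{c\in S}c$ dividing $d$, and then observe that the factor $n_S$ cancels against the count $d/n_S$, leaving $d\,(1-1)^{|C|}=0$. The paper instead argues by induction on $|C|$: the base case is a direct count, and the inductive step splits the sum over $j$ according to whether the last modulus $c_{m+1}$ divides $j$, factors out $f_{c_{m+1}}$, and rescales $j\mapsto j/c_{m+1}$ in the divisible part (which is where pairwise coprimality enters there). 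Both arguments are complete and elementary; yours is shorter and makes the inclusion--exclusion structure explicit, essentially showing that the functions $f_c$ behave like independent mean-zero random variables on $\Z/d$, while the paper's induction is closer in spirit to how the lemma is subsequently applied term by term. Every step of your argument checks out, including the two places where pairwise coprimality is genuinely needed ($\prod_{c\in S}[c\mid j]=[n_S\mid j]$ and $n_S\mid d$).
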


\begin{proof}
We'll prove the statement by induction on the size of the set $C$.  Suppose for the base case that $C = \{c_1\}$.  Then there are $\frac{d}{c_1}$ values of $j$ divisible by $c$ and $d - \frac{d}{c_1}$ which aren't.  The expression in the lemma therefore equals
$$\frac{d}{c_1}(c_1-1) + \left(d - \frac{d}{c_1}\right)(-1) = d -
\frac{d}{c_1} - d + \frac{d}{c_1} = 0.$$

Now, suppose that the statement is true for sets of size $m$ and set $C = \{c_1,\ldots,c_m,c_{m+1}\}$.  Let $d$ be an integer divisible by each element of $C$.  Then we can break up the expression into two pieces based on whether $c_{m+1}$ divides $j$:
$$\sum_{j = 0}^{d-1} \prod_{i = 1}^{m+1}
f_{c_i}(j) = \sum_{\substack{1 \leq j < d, \\ c_{m+1}|j}} \prod_{i=1}^{m+1} f_{c_i}(j) + \sum_{\substack{1 \leq j < d, \\ c_{m+1} \nmid j}} \prod_{i=1}^{m+1} f_{c_i}(j).$$

Next, we may factor the $i = m+1$ term out of each product to obtain:
$$(c_{m+1}-1)\sum_{\substack{1 \leq j < d, \\ c_{m+1}|j}} \prod_{i=1}^{m} f_{c_i}(j) + (-1)\sum_{\substack{1 \leq j < d, \\ c_{m+1} \nmid j}} \prod_{i=1}^{m} f_{c_i}(j) = c_{m+1}\sum_{\substack{1 \leq j < d, \\ c_{m+1}|j}} \prod_{i=1}^{m} f_{c_i}(j) - \sum_{j=0}^{d-1} \prod_{i=1}^{m} f_{c_i}(j).$$

In the sum over multiples of $c_{m+1}$, we can replace each $j$ by $j/c_{m+1}$ without changing the value of the other $f_{c_i}$ because the $c_i$ are pairwise coprime.  Therefore, the last expression may be rewritten:
$$c_{m+1}\sum_{k=0}^{\frac{d}{c_{m+1}}-1} \prod_{i=1}^{m} f_{c_i}(k) - \sum_{j=0}^{d-1} \prod_{i=1}^{m} f_{c_i}(j).$$
Since both $d$ and $d/c_{m+1}$ are positive integers divisible by each element of the pairwise coprime set $\{c_1,\ldots,c_m\}$, the two sums above both equal zero by the inductive hypothesis.  This completes the proof.
\end{proof}

Using this lemma, we prove the properties of Theorem \ref{Hodgenumbers}. Let
$$S_{\ell} \coloneqq  \sum_{j = 0}^{d-1} \prod_{i,d|\ell a_i} f_{\frac{d}{a_i}}(j)$$
be the $\ell$th term of the sum \eqref{Hodgesum}, so that $H = \frac{1}{d}\sum_{j=0}^{d-1} S_{\ell}$. Notice that the product is indexed by a set depending only on $\ell$ (and not $j$).  The following lemma will show that $S_{\ell}$ is equal to zero for many values of $\ell$ in our examples.

\begin{lemma}
\label{elltermvanishing}
Let $X$ be a quasi-smooth Calabi-Yau hypersurface of degree $d$ with weights $a_0,\ldots,a_{n+1}$ all dividing $d$.  Suppose that the set $C \coloneqq \{d/a_i: a_i \neq 1\}$ is pairwise coprime.  Then in the notation above: if $\ell \neq 0$ and $c$ divides $\ell$ for some $c \in C$, then $S_{\ell} = 0$.
\end{lemma}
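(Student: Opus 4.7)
The strategy is to recognize $S_\ell$ as a direct instance of Lemma \ref{counting}. First I would note that the indexing condition $d | \ell a_i$ in the definition of $S_\ell$ excludes every weight $a_i = 1$: for such indices $c_i \coloneqq d/a_i = d$, and $d | \ell$ fails whenever $0 < \ell < d$. For the remaining indices, $d | \ell a_i$ is equivalent to $c_i | \ell$, so
$$S_\ell = \sum_{j=0}^{d-1}\prod_{i \,:\, a_i \neq 1,\ c_i | \ell} f_{c_i}(j).$$

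The next step -- the technical heart of the argument -- is to promote the pairwise-coprimality hypothesis on $C$ into the statement that no two distinct indices $i \neq i'$ with $a_i, a_{i'} \neq 1$ share a common $c_i$-value. Since the weights are positive and sum to $d$, no single $a_i$ equals $d$, so $c_i \geq 2$ for every index with $a_i \neq 1$. Two indices with $c_i = c_{i'} \geq 2$ would then contribute an element of $C$ that fails to be coprime to itself, contradicting the hypothesis; this is precisely the phenomenon excluded by examples such as the fourfold $X_{12} \subset \mathbb{P}(3^{(3)},1^{(3)})$ mentioned in the earlier remark. Thus each factor $f_c$ appears at most once in the product, and setting $C_\ell \coloneqq \{c \in C \,:\, c | \ell\}$, I can rewrite
$$S_\ell = \sum_{j=0}^{d-1}\prod_{c \in C_\ell} f_c(j).$$

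Finally I would invoke Lemma \ref{counting}: $C_\ell$ is nonempty by hypothesis (some $c \in C$ divides $\ell$), its elements are pairwise coprime as a subset of $C$, and each $c \in C_\ell$ divides $d$ since $c = d/a_i$ with $a_i | d$. The lemma then yields $S_\ell = 0$. I foresee no serious obstacle beyond the distinctness step; once that is in hand, the conclusion is immediate from Lemma \ref{counting}.
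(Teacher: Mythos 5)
Your proof is correct and follows essentially the same route as the paper's: discard the indices with $a_i = 1$ using $d \nmid \ell$, rewrite the product as one over the elements of $C$ dividing $\ell$, and apply Lemma \ref{counting} to that nonempty, pairwise coprime subset of divisors of $d$. The only difference is that you make explicit the (harmless) point that the values $d/a_i$ with $a_i \neq 1$ must be distinct for the product over indices to become a product over a set, which the paper passes over silently.
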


\begin{proof}
Let $c_i = d/a_i$ for every element $d/a_i \in C$.  Assuming that $\ell \neq 0$, we have $d \nmid \ell$ so the sum $S_{\ell}$ may be written
$$S_{\ell} = \sum_{j = 0}^{d-1} \prod_{i,d|\ell a_i} f_{\frac{d}{a_i}}(j) = \sum_{j=0}^{d-1} \prod_{c_i \in C, c_i | \ell} f_{c_i}(j).$$
Any subset of $C$ is pairwise coprime and all elements of $C$ divide $d$, so by Lemma \ref{counting}, $S_{\ell} = 0$ whenever the product is nonempty.  This occurs precisely when some $c_i$ divides $\ell$.
\end{proof}

Let $X$ satisfy the condition of Lemma \ref{elltermvanishing}. (For instance, $X = X_2^{(n)}$ or $X_1^{(n)}$; in these cases, $C = \{s_0,\ldots,s_n\}$ or $C = \{s_0,\ldots,s_{n-1}\}$, respectively.)  Then, $S_{\ell}$ is only nonzero if: a) $\ell = 0$, in which case the second product in $Q_{\ell}(x,y)$ is empty, or b) no $s_i$ divides $\ell$, in which case the first product in $Q_{\ell}(x,y)$ is empty.  In case a), $Q_0$ is a polynomial in $xy$.  In case b), $Q_{\ell}(x,y)$ is a monomial $x^a y^b$ with 
$$a + b = 2 \sum_{i = 0}^{n+1} \left(\frac{d}{2} - a_i \right) = 2\left(\frac{(n+2)d}{2} - d\right) = nd.$$

Therefore, the only orbifold Hodge numbers $h_{\mathrm{orb}}^{p,q}$ of the mirror of $X$ that could be nonzero satisfy $p = q$ or $p + q = n$.  Taking the mirror just switches these two types of Hodge numbers, so the vanishing in Theorem \ref{Hodgenumbers}(i) holds for both $X$ and its mirror.  This is enough to prove the first part of the theorem.  We also note that, in odd dimensions, all contributions to the middle orbifold cohomology of $X$ (i.e., to Hodge numbers $h^{p,p}$ of the mirror) come from $Q_0(x,y)$.

To prove parts (ii) and (iii) of Theorem \ref{Hodgenumbers}, we'll consider $X_2^{(n)}$ and $X_1^{(n)}$ separately: 

\textbf{Proof for $X_2^{(n)}$}: (hypersurface of degree $d = s_{n+1}-1$ in $\mathbb{P}(d/s_0,\ldots,d/s_n,1)$) 

The sum of all orbifold Betti numbers (equivalently,
the sum of all orbifold Hodge numbers) is $H = \frac{1}{d}(A + B)$, where $A \coloneqq S_0$ and $B$ is the sum of all $S_{\ell}$ such that $s_i \nmid \ell$, $i = 0,\ldots n$. First, we compute
\begin{align*}
A & = S_0 = \sum_{j = 0}^{d-1} f_{s_0}(j) \cdots f_{s_n}(j) f_d(j)\\
 & = (s_0 - 1) \cdots (s_n - 1)(d-1) + (-1)\sum_{j = 1}^{d-1}f_{s_0}(j) \cdots f_{s_n}(j).
\end{align*}
Here $S_0$ is divided into two pieces by explicitly evaluating the function $f_d$.  By Lemma \ref{counting},
$$\sum_{j = 1}^{d-1}f_{s_0}(j) \cdots f_{s_n}(j) = -f_{s_0}(0) \cdots f_{s_n}(0) = -(s_0 - 1) \cdots (s_n-1). $$
Therefore, $A$ simplifies to $d(s_0-1) \cdots (s_n-1).$  The number of values $\ell$ summed in $B$ is
$$d\left(1 - \frac{1}{s_0}\right) \cdots \left(1 - \frac{1}{s_n}\right) = (s_0 - 1) \cdots (s_n-1).$$

Since the product in $S_{\ell}$ is empty when no $s_i$ divides $\ell$, each $S_{\ell}$ term in $B$ equals $d$; the total is $B = d(s_0-1) \cdots (s_n-1)$. Therefore, $H = \frac{1}{d}(A+B) = 2(s_0-1)\cdots (s_n-1)$, which proves property (ii) for this example.  In odd dimensions, the middle orbifold cohomology has dimension $\frac{1}{d}A = (s_0-1) \cdots (s_n-1)$, which is half the total, as expected. Note that $X_2^{(n)}$ is self-mirror, so its orbifold Euler characteristic vanishes.

\textbf{Proof for $X_1^{(n)}$}: (hypersurface of degree $d = 2s_n - 2$ in $\mathbb{P}(d/s_0,\ldots,d/s_{n-1},1,1)$)

The idea of the proof is similar in this case. Once again, let $A \coloneqq S_0$ and $B$ be the sum of the $S_{\ell}$ with $s_i \nmid \ell$, $i = 0, \ldots n-1$.  The number of terms in $B$ is
$$d\left(1 - \frac{1}{s_0}\right) \cdots \left(1 - \frac{1}{s_{n-1}}\right) = 2(s_0 - 1) \cdots (s_{n-1}-1)$$
and each of these satisfies $S_{\ell} = d$.  So $B = 2d(s_0-1) \cdots (s_{n-1}-1)$.  Similarly,
\begin{align*}
A & = S_0 = \sum_{j = 0}^{d-1} f_{s_0}(j) \cdots f_{s_{n-1}}(j) (f_d(j))^2 \\
& = (s_0 - 1) \cdots (s_{n-1} - 1)(d-1)^2 + (-1)^2\sum_{j = 1}^{d-1}f_{s_0}(j) \cdots f_{s_{n-1}}(j).
\end{align*}
Applying Lemma \ref{counting} again, the last sum equals $-f_{s_0}(0) \cdots f_{s_{n-1}}(0) = (s_0 -1) \cdots (s_{n-1}-1)$.  Thus,
$$A = (s_0 - 1) \cdots (s_{n-1} - 1)d(d-2).$$
The total sum is
\begin{align*}
      H & = \frac{1}{d}(A + B) = \frac{1}{d}(d(d-2)(s_0 - 1) \cdots (s_{n-1} - 1) + 2d(s_0 - 1) \cdots (s_{n-1}-1)) \\
      & = (s_0-1)\cdots(s_{n-1}-1)d = 2(s_0-1) \cdots (s_n -1).
\end{align*}
The dimension of the middle orbifold cohomology in odd dimensions is $\frac{1}{d}A = (d-2)(s_0 - 1) \cdots (s_{n-1} - 1) = (s_0-1) \cdots (s_{n-1} - 1)(2s_n - 4)$.  The third example is the mirror of the first, so properties (i) and (ii) also hold for $X_3^{(n)}$.  The sums $A$ and $B$ switch under mirror symmetry, so the dimension of $H_{\mathrm{orb}}^n(X_3^{(n)},\Q)$ is $\frac{1}{d}B = 2(s_0 - 1) \cdots (s_{n-1} - 1)$.  The orbifold Euler characteristics of $X_1^{(n)}$ and $X_3^{(n)}$ are $\frac{1}{d}(B-A)$ and $\frac{1}{d}(A-B)$, respectively, using the vanishing result (i).
\end{proof}

\section{Klt Calabi-Yau surfaces with large index or small mld}

We have conjectured the klt Calabi-Yau pairs with standard coefficients
of largest index in each dimension,
and also the terminal or canonical Calabi-Yau varieties
of largest index. In section \ref{kltsection}, we conjecture
the answer to a natural intermediate problem: find
the klt Calabi-Yau {\it varieties} of largest index. In this section,
we prove that conjecture in dimension $2$,
finding the klt Calabi-Yau surfaces
with the largest index. We also find
the smallest minimal log discrepancy for klt Calabi-Yau surfaces.
The main tool is Brandhorst-Hofmann's classification of finite group
actions on K3 surfaces, extending many earlier results \cite{BH}.

\begin{proposition}
\label{dim2}
The largest index of any klt Calabi-Yau surface is $19$. The smallest
mld of any klt Calabi-Yau surface is $\frac{1}{13}$.
\end{proposition}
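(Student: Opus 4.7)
The plan is to adapt the strategy of Proposition \ref{dim2pair}, replacing the pair $(X,D)$ by a klt Calabi-Yau variety $X$ (so $D=0$). Let $X$ be a klt Calabi-Yau surface of index $m$, and form the index-1 cover $\pi\colon Y\to X$ of section \ref{notation}: it is a cyclic $\mu_m$-cover with $K_Y\sim 0$, $Y$ canonical Gorenstein, and $\mu_m$ acting faithfully on $H^0(Y,K_Y)$. Because $X$ has no boundary divisor, the cover $\pi$ is étale in codimension $1$ (it is étale over the Gorenstein locus of $X$, whose complement has codimension $\geq 2$), so $\mu_m$ acts on $Y$ without divisorial stabilizers. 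Since $\dim Y=2$ and $Y$ is canonical Gorenstein, $Y$ has only Du Val singularities, and its minimal resolution $Z\to Y$ is crepant, yielding a smooth projective surface with $K_Z\sim 0$. Thus $Z$ is either a K3 surface or an abelian surface, and by Proposition \ref{equivterm} the $\mu_m$-action lifts equivariantly to $Z$ and remains purely non-symplectic.

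The abelian surface case gives $m\leq 12$ by the argument at the end of the proof of Proposition \ref{dim2pair}, so the essential case is when $Z$ is a K3 surface. The key additional constraint here, compared to the pair case, is that $\pi$ being étale in codimension $1$ on $Y$ forces every curve in $Z$ fixed by a nontrivial element of $\mu_m$ to be contracted by $Z\to Y$, that is, to be one of the exceptional $(-2)$-curves. The plan is to invoke Brandhorst-Hofmann's classification of finite group actions on K3 surfaces \cite{BH}, restrict to cyclic purely non-symplectic actions satisfying this ``no non-exceptional fixed curve'' condition, and read off the largest order from their list; one expects this to give $m=19$, realized by a specific pair $(Z,\mu_{19})$ in the classification, whose contraction of the resulting exceptional $(-2)$-curves gives the extremal klt Calabi-Yau surface.

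For the minimal log discrepancy, the same setup localizes the problem: near each singular point $\bar y\in X$, the cover $\pi$ has an isolated ramification point on $Y$, so $X$ is locally a cyclic quotient of a Du Val (or smooth) germ, and its mld at that point is determined by the local weights of the $\mu_m$-action together with the Du Val type of $Y$ there. Enumerating the finitely many local models compatible with the Brandhorst-Hofmann list, computing the mld of each, and taking the infimum should produce the bound $\tfrac{1}{13}$ together with an explicit klt Calabi-Yau surface that attains it. The principal obstacle is the bookkeeping against the classification: ruling out all purely non-symplectic cyclic actions of order between $20$ and $66$ by exhibiting a non-contractible fixed curve, and pinpointing which surviving action produces mld exactly $\tfrac{1}{13}$; once the correct case is identified, the local mld computation for a cyclic quotient of a Du Val singularity is routine.
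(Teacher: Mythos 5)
Your treatment of the index bound follows the paper's proof essentially verbatim: pass to the index-1 cover $Y$, resolve to $Z$, lift the $\mu_m$-action (Proposition \ref{equivterm}), dispose of the abelian case, and use Brandhorst--Hofmann \cite{BH} to show that every purely non-symplectic cyclic action of order $>19$ on a K3 surface fixes a curve of positive genus, which cannot be contracted by $Z\to Y$; the order-$19$ example of Kondo then realizes the bound. That half is fine.

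The mld half has a genuine gap. You reduce to ``enumerate the finitely many local models of cyclic quotients of Du Val germs compatible with the classification and compute,'' but you do not identify the two reductions that make this finite computation actually converge to $\tfrac{1}{13}$, and without them the enumeration is both heavy and unorganized. The paper's argument runs as follows. First, it transfers the computation to the smooth model: with $(W,D)=Z/\mu_m$, one has $\mld(X)=\mld(W,D)$ (Lemma \ref{samemld}), so one never needs to analyze quotients of Du Val germs directly. Second, since $m(K_W+D)$ is Cartier, every log discrepancy over $W$ is an integer multiple of $1/m$; hence $\mld(X)<\tfrac{1}{13}$ would force $m\geq 14$, and the fixed-curve constraint from \cite{BH} then leaves only $m\in\{14,17,19\}$. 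Third, the general bound $\mld_{\geq 2}(W,D)\geq 2/|G|$ for quotients of smooth (terminal Gorenstein) varieties by finite groups (Proposition \ref{discrepformula}) rules out $m=17,19$ outright (where $D=0$), and for $m=14$ the coefficients of $D$ are $\tfrac{1}{2}$ and $\tfrac{6}{7}$, giving $\mld=\tfrac{1}{7}>\tfrac{1}{13}$ (Lemmas \ref{14}, \ref{1719}). Without the divisibility observation and the $2/|G|$ bound, your proposed case-by-case computation of mld's of cyclic quotient singularities over all admissible actions of order up to $19$ is not a routine step but the entire content of the proof, and it is not clear your local-model framing would organize it correctly.
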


For comparison, the largest index of a klt Calabi-Yau
pair with standard coefficients of dimension $2$ is $66$
(Proposition \ref{dim2pair}), and the smallest mld
of a klt Calabi-Yau pair with standard coefficients of dimension $2$
is $\frac{1}{42}$ (Proposition \ref{42}, below).

\begin{proof}
For any klt Calabi-Yau surface $X$, let $Y$ be the index-1 cover of $X$
(section \ref{notation}). Thus $Y$ is a projective surface
with canonical singularities
such that $K_Y$ is trivial.
Writing $m$ for the index of $X$,
we have $X=Y/\mu_m$, where $\mu_m$ acts purely non-symplectically,
meaning that it acts faithfully on $H^0(Y,K_Y)\cong \mathbb{C}$. Also,
$\mu_m$ acts freely in codimension $1$ on $Y$, because $X$ is a Calabi-Yau
surface rather than a pair.

Let $\pi\colon Z\rightarrow Y$ be the minimal resolution. Since $K_Z=\pi^*(K_Y)$,
$Z$ is a smooth projective surface with $K_Z$ linearly equivalent to zero.
By Proposition \ref{equivterm},
the action of $\mu_m$ on $Y$ lifts to an action on $Z$.
By the classification of smooth projective
surfaces with trivial canonical bundle,
$Z$ is a $K3$ surface or an abelian surface.
If $Z$ is an abelian surface, then $m$ is at most $6$
\cite[Theorem C]{Blache}, \cite[Theorem 4.1]{DeQiZ}.
In that case, $X$ has mld at least $\frac{1}{6}$.

Since we are trying to find the largest index or smallest mld
among klt Calabi-Yau surfaces, we can assume
from now on that $Z$ is a smooth K3 surface, where more extreme values
can occur. Note that $Y$ may be a nontrivial
contraction of $Z$. Let $W=Z/\mu_m$, and let $(W,D)$ be the pair with standard coefficients
associated to the $\mu_m$-action on $Z$.
Denote by $\sigma\colon W \rightarrow X$ the 
quotient of the morphism $\pi\colon Z\to Y$
by the action of $\mu_m$, so that we have:
\begin{center}
    \begin{tikzcd}
    Z \arrow[r, "\pi"] \arrow[d] & Y \arrow[d] \\
    W \arrow[r, "\sigma"] & X.
    \end{tikzcd}
\end{center}

Kondo found that the group $\mu_{19}$ acts
purely non-symplectically on a certain smooth projective K3 surface $Z$
\cite[section 7]{Kondo}.
In fact, the K3 surface and the action are unique up to isomorphism
and automorphisms of $\mu_{19}$
\cite[Theorem 1.1]{Brandhorst}. Also, $\mu_{19}$ acts freely
in codimension 1 on $Z$ \cite[Table 37]{BH}.
So $W = Z/\mu_{19}$ is a klt Calabi-Yau surface of index $19$.
In the spirit of this paper,
we can describe $Z$ as the minimal resolution of the hypersurface
$Y_{10}\subset \mathbb{P}^3(5,3,1,1)$ defined by
$0=x_0^2+x_1^3x_3+x_3^9x_2+x_2^7x_1$. Here $\mu_{19}$ acts on $Y$
by
$$\zeta[x_0:x_1:x_2:x_3]=[\zeta x_0:\zeta^7 x_1:\zeta^2 x_2:x_3],$$
$Y$ has an $A_2$ singularity,
and $Y/\mu_{19}$ is also a klt Calabi-Yau surface of index $19$.

In the other direction,
Appendix B in \cite{BH} shows that for each purely non-symplectic action
of a cyclic group of order greater than $19$ on a smooth K3 surface,
there is a smooth curve of positive genus that is fixed by a nontrivial subgroup. This curve cannot be contracted by $\pi\colon Z\to Y$ (in the notation above), and so $\mu_m$ cannot act freely in codimension $1$ on $Y$. This contradicts that $X = Y/\mu_m$ is a klt
Calabi-Yau surface (rather than a Calabi-Yau pair).
So $19$ is the largest index among all klt Calabi-Yau surfaces.

Next, we will find the smallest possible mld for a klt Calabi-Yau surface $X$.
We know that $X$ has index $m \leq 19$. As above, let $Y$ be the index-1
cover of $X$, $Z$ its minimal resolution, and $(W,D)$ the quotient
of $Z$ by $\mu_m$.

\begin{lemma}
\label{samemld}
$$\mld(X)=\mld(W,D).$$
\end{lemma}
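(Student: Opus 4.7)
The plan is to prove $\mld(X)=\mld(W,D)$ by first showing that $\sigma\colon W\to X$ is crepant, i.e., $K_W+D=\sigma^*(K_X)$, and then invoking the standard fact that crepant birational morphisms preserve the log discrepancy of every divisor over $X$.

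First I would establish the crepant identity for $\sigma$. Because $X$ is a klt Calabi-Yau \emph{variety} (not merely a pair), the action of $\mu_m$ on $Y$ is free in codimension $1$, so the quotient $\rho\colon Y\to X$ is étale in codimension $1$ and $K_Y=\rho^*(K_X)$ (using that $K_X$ is $\Q$-Cartier by the klt hypothesis). The minimal resolution $\pi\colon Z\to Y$ is crepant since $Y$ has canonical singularities, so $K_Z=\pi^*(K_Y)=(\rho\pi)^*(K_X)=q^*\sigma^*(K_X)$ from commutativity of the square. On the other hand, the pair $(W,D)$ is by construction the one for which $q^*(K_W+D)=K_Z$. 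Combining these yields $q^*(K_W+D)=q^*\sigma^*(K_X)$; since $q$ is finite surjective and both sides are $\Q$-Cartier, pulling back by $q$ is injective and we conclude $K_W+D=\sigma^*(K_X)$.

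Second, I would use this to match log discrepancies divisor by divisor. Any valuation of the common function field $k(X)=k(W)$ that gives a divisor over $X$ also gives a divisor over $W$, and conversely, since both varieties are proper. For such a divisor $E$, choose a proper birational $\mu\colon V\to W$ on which $E$ appears, and put $\nu\coloneqq\sigma\mu\colon V\to X$. The identity $K_W+D=\sigma^*(K_X)$ gives $\mu^*(K_W+D)=\nu^*(K_X)$, so the coefficients of $E$ in $K_V-\mu^*(K_W+D)$ and in $K_V-\nu^*(K_X)$ coincide. That is, $a_E(W,D)=a_E(X)$ for every such $E$. Taking infima over the common set of divisors over the two varieties then yields $\mld(W,D)=\mld(X)$.

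The main obstacle is really just the crepant identity, which rests on two inputs: the codimension-$1$ freeness of $\mu_m$ on $Y$ (essential, and available precisely because $X$ is a variety rather than a pair) and the injectivity of $q^*$ on $\Q$-Cartier classes for the finite surjective map $q$. Once the identity $K_W+D=\sigma^*(K_X)$ is in hand, the equality of mld's is a routine discrepancy computation on a common log resolution.
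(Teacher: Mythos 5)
Your proof is correct and takes essentially the same approach as the paper: both establish the crepant identity $K_W+D=\sigma^*K_X$ by observing that both sides pull back to $K_Z$ under the finite quotient map $Z\to W$ (using $K_Z=\pi^*K_Y$ from canonicity of $Y$ and the codimension-$1$ freeness of the $\mu_m$-action on $Y$), and then deduce equality of mld's from the resulting equality of log discrepancies for every divisor over $X\cong_{\mathrm{bir}} W$. The paper phrases the last step as computing the mld on a common log resolution of $(W,D)$, which is the same divisor-by-divisor comparison you spell out.
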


\begin{proof}
Consider the commutative square above.
Since $Y$ has canonical singularities, we have $K_Z=\pi^*K_Y$.
Taking quotients by $\mu_m$, we have that
$K_Y$ is the pullback of $K_X$ and $K_Z$ is the pullback
of $K_W+D$. So $K_W+D$ and $\sigma^*K_X$ both pull back
to $K_Z$ on $Z$. Since they differ by an explicit $\Q$-divisor
on $W$, it follows that $K_W+D=\sigma^*K_X$.
Since the mld of $X$ can be computed on a log
resolution of $(W,D)$, we conclude that $\mld(X)=\mld(W,D)$.
\end{proof}

Given a pair $(X,D)$, we'll use the notation $\mld_{\geq 2}(X,D)$ to denote the minimal log discrepancy over all points
$x \in X$ with codimension at least $2$ (corresponding to exceptional divisors over $X$, rather than all divisors over $X$). We have 
the following general bound on discrepancies for quotients by finite groups.

\begin{proposition}
\label{discrepformula}
Let $(Z,F)$ be a quasi-projective terminal pair with a faithful action of a finite group $G$. Let $W=Z/G$, and let $D$ be the $\Q$-divisor on $W$
such that $K_Z+F$ is the pullback of $K_W+D$. Then
$$\mld_{\geq 2}(W,D) > \frac{1}{|G|}.$$
If $Z = (Z,0)$ is a terminal Gorenstein variety, then
$$ \mld_{\geq 2}(W,D) \geq \frac{2}{|G|}.$$
\end{proposition}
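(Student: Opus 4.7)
The plan is to pull divisors back from $W$ to $Z$ and apply the ramification formula together with the terminal hypothesis on $(Z,F)$. Let $E$ be a prime divisor over $W$ computing $\mld_{\geq 2}(W,D)$, so its center in $W$ has codimension at least $2$. First I would choose a proper birational model $W'\to W$ on which $E$ appears as a prime divisor, for instance the resolution extracting $E$. Then I would form the normalization $Z'$ of the fiber product $Z\times_W W'$; this is a $G$-equivariant proper birational modification of $Z$, and I take $E'\subset Z'$ to be any irreducible component of the preimage of $E$.

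Next I would apply Riemann-Hurwitz to the finite quotient map $\pi\colon Z'\to W'$. Let $r$ be the ramification index of $\pi$ along $E'$, equal to the order of the stabilizer of $E'$ in $G$; in particular $r\leq |G|$. Writing $d$ for the coefficient of $E$ in the transform $D_{W'}$ and $f$ for the coefficient of $E'$ in the transform $F_{Z'}$, the identity $K_{Z'}+F_{Z'}=\pi^{*}(K_{W'}+D_{W'})$ combined with $\pi^{*}E=rE'+\cdots$ and $K_{Z'/W'}=\sum(r_i-1)E_i'$ yields $1-f=r(1-d)$, that is,
\[
a_{E'}(Z,F)=r\cdot a_E(W,D).
\]

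Because $\pi$ is finite, the center of $E'$ in $Z$ maps finitely onto the center of $E$ in $W$, so it again has codimension at least $2$, and thus $E'$ is exceptional over $Z$. Since $(Z,F)$ is terminal, we have $a_{E'}(Z,F)>1$, so
\[
a_E(W,D)=\frac{a_{E'}(Z,F)}{r}>\frac{1}{r}\geq\frac{1}{|G|},
\]
which is the first inequality. For the second, when $Z$ is Gorenstein and $F=0$, $K_Z$ is Cartier, so $a_{E'}(Z,0)-1$ is a nonnegative integer for every exceptional $E'$; terminality forces it to be strictly positive, hence $a_{E'}(Z,0)\geq 2$, and the identity above upgrades the bound to $a_E(W,D)\geq 2/|G|$.

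The main obstacle, such as it is, is to verify the ramification identity cleanly on the fiber product model and to check that an exceptional divisor $E$ over $W$ genuinely pulls back to an exceptional divisor $E'$ over $Z$; both reduce to finiteness of $\pi$ and the standard computation of the different $K_{Z'/W'}=\sum(r_i-1)E_i'$ for Galois covers in characteristic zero. Everything else is bookkeeping, and the Gorenstein improvement is purely an integrality statement for Cartier $K_Z$.
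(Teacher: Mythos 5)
Your argument is correct and is essentially the paper's: the identity $a_{E'}(Z,F)=r\,a_E(W,D)$ with $r\le |G|$ that you derive via Riemann--Hurwitz on the normalized fiber product is precisely the content of the citation \cite[Corollary 2.43]{Kollarsing} that the paper invokes to get $\mld_{\geq 2}(W,D)\ge \frac{1}{|G|}\mld_{\geq 2}(Z,F)>\frac{1}{|G|}$, and your check that $E'$ is exceptional over $Z$ (because finite maps preserve codimension of centers) is the same point the paper handles by restricting to $\mld_{\geq 2}$. The only cosmetic difference is in the Gorenstein case, where you apply integrality upstairs on $Z$ (where $K_Z$ Cartier forces $a_{E'}(Z,0)\ge 2$), while the paper applies it downstairs on $W$ (where $|G|(K_W+D)$ Cartier forces log discrepancies into $\frac{1}{|G|}\Z$); both give $\mld_{\geq 2}(W,D)\ge 2/|G|$.
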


We need to exclude points of codimension 1 in these inequalities. For example,
if a cyclic group $G=\mu_m$
fixes an irreducible divisor $S$ in a variety $Z$, then the image $D_1$ of $S$
in $W$ occurs in $D$ with coefficient $(m-1)/m$, and so
the generic point $w$ of $D_1$ has $\mld_w(W,D)=1/m=1/|G|$.

\begin{proof}
By \cite[Corollary 2.43]{Kollarsing}, we have the inequalities
$$\mld_{\geq 2}(W,D) \leq \mld_{\geq 2}(Z,F) \leq |G|\mld_{\geq 2}(W,D).$$
(The notation ``$\mathrm{discrep}(W,D)$" of \cite{Kollarsing}
means the infimum of all discrepancies
of exceptional divisors over $X$,
which is the same as $\mld_{\geq 2}(W,D) - 1$.) By definition, the terminality of $(Z,F)$ means that
$\mld_{\geq 2}(Z,F)>1$. It follows that
$$\mld_{\geq 2}(W,D) \geq \frac{1}{|G|} \mld_{\geq 2}(Z,F) > \frac{1}{|G|}.$$

If $Z=(Z,0)$ is a terminal Gorenstein variety, then $|G|(K_{W} + D)$ is Cartier.  It follows that the discrepancy of every
divisor over $W$ is a multiple of $1/|G|$, and so $\mld_{\geq 2}(W,D) \geq 2/|G|$.
\end{proof}

By \cite[Table 37]{BH}, there is a smooth K3 surface $Z$ with a purely non-symplectic action of the cyclic group $\mu_{13}$; the fixed locus
consists of a smooth rational curve $C$ and 9 other points. Since $K_Z$
is trivial, the adjunction formula gives that $C$ is a $(-2)$-curve.
Let $Y$ be the canonical K3 surface obtained by contracting $C$.
Then the cyclic group $\mu_{13}$ acts
freely in codimension 1 on $Y$. So $X \coloneqq Y/\mu_{13}$ is a klt Calabi-Yau surface of index $13$.

For an explicit example,
start with the canonical K3 surface $S_{11}\subset
\mathbb{P}^3(5,3,2,1)$ defined by $0=x_0^2x_3+x_1^3x_2+x_2^3x_0
+x_3^8x_1$, on which there is a purely non-symplectic
action of $\mu_{13}$:
$$\zeta[x_0:x_1:x_2:x_3]=[\zeta x_0:\zeta^2 x_1:\zeta^{-4}x_2:x_3].$$
This action is free in codimension 1, and so $S/\mu_{13}$ is a klt
Calabi-Yau surface of index 13. One checks that
the minimal resolution $Z$ of $S$
contains a smooth rational curve $C$ fixed by $\mu_{13}$
over the $A_4$ singularity $[1:0:0:0]$ of $S$,
in agreement with \cite[Table 37]{BH}.

\begin{lemma}\label{13}
There is a klt Calabi-Yau surface of index $13$.
Every such surface has mld $\frac{1}{13}$.
\end{lemma}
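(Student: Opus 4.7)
The plan is twofold. Existence is essentially handed to us by the explicit hypersurface $S_{11}\subset\mathbb{P}^3(5,3,2,1)$ described just above the lemma: one verifies that the displayed $\mu_{13}$-action is purely non-symplectic and free in codimension $1$, so that $S_{11}/\mu_{13}$ is a klt Calabi-Yau surface of index $13$. For the mld claim I set up the same commutative square as in Proposition \ref{dim2}: let $Y$ be the index-$1$ cover of $X$, $\pi\colon Z\to Y$ the minimal resolution, and $(W,D)$ the quotient pair $Z/\mu_{13}$. Since purely non-symplectic cyclic actions on abelian surfaces have order at most $6$, $Z$ must be a K3 surface, so by Lemma \ref{samemld} it suffices to compute $\mld(W,D)$.

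To evaluate $\mld(W,D)$ I would invoke \cite[Table 37]{BH}: every purely non-symplectic $\mu_{13}$-action on a K3 surface has fixed locus consisting of a single smooth rational curve $C$ together with finitely many isolated points. Adjunction on the K3 forces $C$ to be a $(-2)$-curve. Because $X=Y/\mu_{13}$ is a variety rather than only a pair, $\mu_{13}$ acts freely in codimension $1$ on $Y$, and hence $\pi$ must contract $C$. The image $\bar C\subset W$ then appears in $D$ with coefficient $(13-1)/13=12/13$, and $W$ is smooth at the generic point of $\bar C$ (the local model at a generic point of the fixed curve is $\mathbb{A}^1\times(\mathbb{A}^1/\mu_{13})\cong \mathbb{A}^2$). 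This already gives $\mld(W,D)\leq 1/13$ at the generic point of $\bar C$.

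For the reverse inequality I split by codimension. At codimension-$1$ points of $W$ distinct from $\bar C$, the $\mu_{13}$-action on $Z$ is free, so $W$ is smooth there with $D$ locally zero, and the mld is $1$. At codimension-$\geq 2$ points, Proposition \ref{discrepformula} applied with $G=\mu_{13}$ to the terminal Gorenstein pair $(Z,0)$ yields $\mld_{\geq 2}(W,D)\geq 2/13>1/13$. Combining, $\mld(W,D)=1/13$, and hence $\mld(X)=1/13$ by Lemma \ref{samemld}.

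The main obstacle is arguing that the $(-2)$-curve $C$ produced by \cite[Table 37]{BH} really must be contracted by $\pi$, as opposed to surviving as a divisor on $Y$. This is exactly what the hypothesis that $X$ is a Calabi-Yau variety (not merely a pair) rules out: a $\mu_{13}$-fixed divisor on $Y$ would force a nonzero boundary coefficient on $X$, contradicting our setup. Once this point is settled, the rest is bookkeeping with Proposition \ref{discrepformula} and the adjunction for finite quotients.
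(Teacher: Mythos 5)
Your proof is correct and follows the paper's overall structure: reduce to the pair $(W,D)=Z/\mu_{13}$ via Lemma \ref{samemld}, use \cite[Table 37]{BH} to see that $D$ is $\frac{12}{13}$ times the image of the fixed rational curve, and conclude $\mld\leq \frac{1}{13}$ at the generic point of that curve. The one genuine divergence is the lower bound. The paper's argument is a one-liner: $13(K_W+D)$ is Cartier, so every log discrepancy lies in $\frac{1}{13}\Z$, and since $(W,D)$ is klt its mld is a positive multiple of $\frac{1}{13}$, hence at least $\frac{1}{13}$. You instead split by codimension, checking coefficients directly at divisorial points and invoking Proposition \ref{discrepformula} to get $\mld_{\geq 2}(W,D)\geq \frac{2}{13}$; this is exactly the mechanism the paper uses for the indices $14$, $17$, $19$ and in Proposition \ref{42}, and it gives the marginally sharper conclusion that the minimum is attained only along $\bar{C}$. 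Both routes are valid. Your observation that the fixed $(-2)$-curve must be contracted by $Z\to Y$ because $X$ is a variety rather than a pair is correct and matches the paper's surrounding discussion, though note it is not actually needed to evaluate $\mld(W,D)$: the coefficient $\frac{12}{13}$ appears in $D$ on $W=Z/\mu_{13}$ regardless of what happens on $Y$.
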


\begin{proof}
We have shown that there is a klt Calabi-Yau surface $X$ of index 13.
As throughout this section,
let $Y$ be the index-1 cover of $X$, $Z$ the minimal resolution
of $Y$, and $(W,D)=Z/\mu_{13}$.
By Lemma \ref{samemld}, $\mld(X)=\mld(W,D)$.
Here $13(K_{W}+D)$
is Cartier, and so $\mld(W,D)$ is a positive integer divided by $13$.
Also, $D$ is $\frac{12}{13}$ times the image of the curve $C$ above,
and so $(W,D)$ has log discrepancy $\frac{1}{13}$ at the generic point
of this curve. Thus $\mld(W,D)=\frac{1}{13}$.
\end{proof}

\begin{lemma}
\label{14}
The mld of a klt Calabi-Yau surface of index $14$ is $\frac{1}{7}$.
\end{lemma}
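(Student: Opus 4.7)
The plan is to follow the template of Lemma \ref{13}. Let $X$ be a klt Calabi-Yau surface of index $14$, let $Y$ be its index-$1$ cover, let $\pi\colon Z\to Y$ be the minimal resolution, and set $(W,D)=Z/\mu_{14}$. Since quotients of abelian surfaces have index at most $6$, $Z$ must be a smooth K3 surface, and a generator $\sigma$ of $\mu_{14}$ acts on $Z$ purely non-symplectically and freely in codimension $1$ on $Y$. By Lemma \ref{samemld}, $\mld(X)=\mld(W,D)$, so the task reduces to computing the mld of the pair $(W,D)$.

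For the lower bound, I would combine divisibility with Proposition \ref{discrepformula}. Because $14(K_W+D)$ is Cartier, every log discrepancy of $(W,D)$ lies in $\tfrac{1}{14}\Z$, and a priori $\mld(W,D)\geq \tfrac{1}{14}$. Applied to the smooth (hence terminal Gorenstein) surface $Z$ with $|G|=14$, Proposition \ref{discrepformula} yields $\mld_{\geq 2}(W,D)\geq \tfrac{2}{14}=\tfrac{1}{7}$, so no exceptional divisor over $W$ realizes the value $\tfrac{1}{14}$. At codimension one, the log discrepancy along the image of a curve with stabilizer $\mu_b$ equals $\tfrac{1}{b}$, with $b\in\{2,7,14\}$ a divisor of $14$. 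The key input from Brandhorst--Hofmann's Table 37 in \cite{BH} is that a purely non-symplectic order-$14$ automorphism of a K3 surface fixes no curve pointwise, which rules out $b=14$; hence every codimension-$1$ log discrepancy is at least $\tfrac{1}{7}$, and therefore $\mld(W,D)\geq \tfrac{1}{7}$.

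For the matching upper bound, I would use the same classification to exhibit a smooth curve $C\subset Z$ pointwise fixed by the subgroup $\mu_7\subset\mu_{14}$. Since the $\mu_{14}$-action on $Y$ is free in codimension $1$, such a $\mu_7$-fixed curve must be contracted by $\pi$, and is therefore a $(-2)$-curve on $Z$. Its image $\bar C\subset W$ appears in $D$ with coefficient $\tfrac{6}{7}$, so the log discrepancy at the generic point of $\bar C$ is exactly $\tfrac{1}{7}$, giving $\mld(W,D)\leq \tfrac{1}{7}$. Combining the two bounds yields $\mld(X)=\tfrac{1}{7}$.

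The main obstacle is the extraction of these two fixed-locus facts from \cite{BH}: that the full $\mu_{14}$ fixes no curve on any K3 surface carrying a purely non-symplectic order-$14$ action of this type, while the subgroup $\mu_7$ always does. With these in hand, the rest is formal, combining Lemma \ref{samemld}, Proposition \ref{discrepformula}, and the coefficient structure of the quotient pair $(W,D)$.
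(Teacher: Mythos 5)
Your proposal is correct and follows essentially the same route as the paper: reduce to $\mld(W,D)$ via Lemma \ref{samemld}, use the Brandhorst--Hofmann classification to see that the only curves fixed by nontrivial subgroups of $\mu_{14}$ give coefficients $\frac{1}{2}$ and $\frac{6}{7}$ (so the codimension-$1$ minimum is $\frac{1}{7}$), and apply Proposition \ref{discrepformula} to the smooth K3 surface $Z$ to bound the mld at closed points below by $\frac{2}{14}=\frac{1}{7}$. The paper cites Table 15, entry 0.14.0.4 of \cite{BH} (rather than Table 37) for the precise fixed-locus data, but the argument is the same.
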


\begin{remark}
Although it is not needed for the proof of Proposition \ref{dim2},
we remark that there is a klt Calabi-Yau surface of index $14$.
Start with the canonical K3 surface $S_7\subset \mathbb{P}^3(3,2,1,1)$
defined by $0=x_0^2x_3+x_1^3x_2+x_1x_2^5+x_3^7$, on which
there is a purely non-symplectic action of $\mu_{14}$:
$$\zeta[x_0:x_1:x_2:x_3]=[\zeta x_0:\zeta x_1:\zeta^{-3}x_2:\zeta^{-2}x_3].$$
The curve $C=\{0=x_1=x_3\}\cong \mathbb{P}^1(3,1)$ is contained in $S$
and fixed by the subgroup $\mu_2$. Here $C^2=-4/3<0$, and so $C$
can be contracted, by Artin's contraction theorem applied to the minimal
resolution of $S$ \cite[Theorem 2.7]{Artin}.
This yields another canonical K3 surface $Y$
on which $\mu_{14}$ acts freely in codimension $1$. Then $X=Y/\mu_{14}$
is a klt Calabi-Yau surface of index $14$.
\end{remark}

\begin{proof}
(Lemma \ref{14})
Let $X$ be a klt Calabi-Yau surface of index $14$. In the notation above,
$Z$ must be a smooth K3 surface with a purely non-symplectic action
of $\mu_{14}$, and every curve in $Z$ fixed by a nontrivial subgroup
of $\mu_{14}$ must have genus zero. By \cite[Table 15, 0.14.0.4]{BH},
there is exactly one deformation type of smooth K3 surfaces $Z$
with such an action, up to automorphisms of $\mu_{14}$.
The table shows that there is no curve in $Z$
fixed by the whole group $\mu_{14}$, but there
are three smooth curves of genus zero fixed by the subgroup $\mu_2$
and one smooth curve of genus zero fixed by $\mu_7$.

Let $(W,D)=Z/\mu_{14}$.
We have $\mld(X)=\mld(W,D)$ by Lemma \ref{samemld}. Here $D$ has coefficients
$6/7$ and $1/2$. So the mld of $(W,D)$ is $1/7$
at one point of codimension 1 and greater than $1/7$ at every other point
of codimension 1. Also, by Proposition \ref{discrepformula}, since $(W,D)$ is the 
quotient of a smooth K3 surface by $\mu_{14}$, the mld of $(W,D)$ at any closed point 
is at least $2/14=1/7$. So the mld of $X$ is $1/7$.
\end{proof}

\begin{lemma}
\label{1719}
For $m$ equal to $17$ or $19$,
the mld of a klt Calabi-Yau surface of index $m$ is at least $\frac{2}{m}$.
\end{lemma}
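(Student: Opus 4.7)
The plan is to reduce to the Brandhorst--Hofmann classification of purely non-symplectic $\mu_m$-actions on K3 surfaces and then invoke Proposition~\ref{discrepformula}. Let $X$ be a klt Calabi-Yau surface of index $m\in\{17,19\}$, and let $Y$, $Z$, $(W,D)$ be the objects from the setup of Proposition~\ref{dim2}: $Y$ is the index-$1$ cover of $X$, $Z$ its minimal resolution (a smooth K3 surface), and $(W,D)=Z/\mu_m$ the associated pair with standard coefficients. By Lemma~\ref{samemld}, $\mld(X)=\mld(W,D)$, so it suffices to prove $\mld(W,D)\ge 2/m$.

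The decisive step is to show $D=0$, i.e.\ that $\mu_m$ acts freely in codimension $1$ on $Z$. Since $m$ is prime, any curve in $Z$ fixed by a nontrivial subgroup of $\mu_m$ is fixed by all of $\mu_m$. For $m=19$ this was already recorded above from \cite[Table 37]{BH}. For $m=17$ I would perform the parallel check against \cite[Table 37]{BH}, which lists the fixed loci of all purely non-symplectic $\mu_{17}$-actions on K3 surfaces; the expected outcome is that in every case the fixed locus consists of isolated points only, leaving no curve in $Z$ fixed by $\mu_{17}$. (If a fixed $(-2)$-curve did exist, it would be contracted by $\pi\colon Z\to Y$ and its image would appear in $D$ with coefficient $(m-1)/m$, forcing $\mld(W,D)\le 1/m<2/m$; so the content of the lemma is precisely that the table rules this out.)

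With $D=0$ established, I apply Proposition~\ref{discrepformula} to the quotient of the smooth (hence terminal Gorenstein) K3 surface $Z$ by $G=\mu_m$, obtaining
\[
\mld_{\ge 2}(W,0)\ge\frac{2}{|\mu_m|}=\frac{2}{m}.
\]
Since $W$ is a normal surface it is regular in codimension $1$, and with $D=0$ the log discrepancy at the generic point of any prime divisor of $W$ equals $1>2/m$. Combining these two bounds gives $\mld(W)\ge 2/m$, and hence $\mld(X)\ge 2/m$ by Lemma~\ref{samemld}.

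The main obstacle is the $m=17$ case: one must verify directly from \cite[Table 37]{BH} that no purely non-symplectic $\mu_{17}$-action on a K3 surface fixes a curve. This is a finite table lookup, strictly parallel to the $m=19$ case already used in the proof of Proposition~\ref{dim2}, and I do not anticipate additional difficulties beyond that bookkeeping.
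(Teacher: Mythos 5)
Your proposal is correct and follows essentially the same route as the paper: reduce via Lemma \ref{samemld} to $(W,D)=Z/\mu_m$, use the Brandhorst--Hofmann classification to conclude the action is free in codimension $1$ (so $D=0$), and then apply Proposition \ref{discrepformula} to the quotient of the smooth K3 surface. The only difference is presentational: the paper cites \cite[Appendix B]{BH} directly for the statement that every purely non-symplectic $\mu_{17}$- or $\mu_{19}$-action on a smooth K3 surface is free in codimension $1$, confirming the table lookup you flag as the remaining bookkeeping.
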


\begin{proof}
Let $X$ be a klt Calabi-Yau surface of index $m$ equal to $17$
or $19$. Let $Y$ be the index-1 cover of $X$, $Z$
the minimal resolution of $Y$, and $(W,D)=Z/\mu_m$.
By \cite[Appendix B]{BH}, for $m$ equal to $17$ or $19$,
every purely non-symplectic action of $\mu_m$ on a smooth
K3 surface is free in codimension $1$; so $D=0$.
Then Lemma \ref{samemld} gives that $\mld(X)=\mld(W)$. Since $W$
is a normal variety (rather than a pair),
$\mld(W)$ is equal to the minimum of its mld's
at closed points, and these are at least $\frac{2}{m}$ by Proposition
\ref{discrepformula}.
\end{proof}

We now show that among all klt Calabi-Yau surfaces $X$,
the smallest mld is $1/13$. We know by Lemma \ref{13}
that $1/13$ does occur. Here $X$ must be a quotient
$Y/\mu_m$ as discussed above, in particular with $\mu_m$ acting
freely in codimension $1$. The mld of $X$ is a positive integer
divided by $m$. So if $X$ has mld less than $1/13$,
it must have index $m\geq 14$.
Since the minimal resolution $Z\to Y$ contracts only smooth
rational curves, every curve in $Z$ fixed by a nontrivial
subgroup of $\mu_m$ must have genus zero.
By \cite[Appendix B]{BH}, if a smooth K3 surface $Z$
has a purely non-symplectic action of the cyclic group $\mu_m$ with $m$
at least 14 and not equal to $14$, $17$, or $19$,
then there is a curve of genus $>0$ fixed
by a nontrivial subgroup. So $m$ must be $14$, $17$, or $19$.
Therefore, Lemmas \ref{14} and \ref{1719} give
that the smallest mld of a klt Calabi-Yau surface is $1/13$.
Proposition \ref{dim2} is proved.
\end{proof}

We conclude with the analogous bound for pairs.

\begin{proposition}
\label{42}
The smallest mld of any klt Calabi-Yau pair with standard
coefficients of dimension 2 is $\frac{1}{42}$.
\end{proposition}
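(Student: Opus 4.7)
The plan is to follow the template of Proposition \ref{dim2}, using the index-$1$ cover and Brandhorst--Hofmann's classification of finite cyclic group actions on K3 surfaces, with the modification that in the pair setting the covering group is now allowed to fix curves in the resolution and these contribute divisor components to $D$.

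Let $(X,D)$ be a klt Calabi-Yau pair of dimension $2$ with standard coefficients and index $m$. By Proposition \ref{dim2pair}, $m\leq 66$. Let $\psi\colon Y\to X$ be the index-$1$ cover, so $Y$ is a canonical Gorenstein surface with $K_Y\sim 0$ and $\psi$ is the quotient by a purely non-symplectic action of $\mu_m$. Let $\pi\colon Z\to Y$ be the minimal resolution, lift the $\mu_m$-action by Proposition \ref{equivterm}, and set $W:=Z/\mu_m$ with quotient $\rho\colon Z\to W$. Equip $W$ with the $\Q$-divisor $D'$ having coefficient $(b-1)/b$ on the image of each curve in $Z$ whose stabilizer in $\mu_m$ has order $b$; then $\rho^*(K_W+D') = K_Z$, and since $Y$ is canonical and $K_Z=\pi^*K_Y$, we obtain $K_W+D'=\sigma^*(K_X+D)$ for the induced map $\sigma\colon W\to X$. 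Computing the mld on a common log resolution gives $\mld(X,D) = \mld(W,D')$, exactly as in Lemma \ref{samemld}. By the classification of smooth surfaces with trivial canonical bundle, $Z$ is either an abelian surface (in which case the argument of Proposition \ref{dim2pair} forces $m\leq 12$ and one readily deduces $\mld(X,D) \geq 1/12 > 1/42$) or a K3 surface, the case of real interest.

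Assume $Z$ is a K3 surface and split the infimum defining $\mld(W,D')$ into contributions from codimensions $1$ and $2$. For codimension-$2$ points, Proposition \ref{discrepformula} applied to the smooth (hence terminal Gorenstein) surface $Z$ and the group $\mu_m$ of order at most $66$ gives
$$\mld_{\geq 2}(W,D') \geq \frac{2}{m} \geq \frac{2}{66} = \frac{1}{33} > \frac{1}{42}.$$
At the generic point of a prime component of $D'$, the log discrepancy equals $1/b$ where $\mu_b\subseteq \mu_m$ is the stabilizer of the corresponding curve in $Z$. The proposition therefore reduces to the claim that, for every purely non-symplectic action of a cyclic group of order $m\leq 66$ on a smooth K3 surface, every subgroup $\mu_b$ that fixes a curve pointwise satisfies $b\leq 42$.

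This claim is the main obstacle. I would settle it by combing through Brandhorst--Hofmann's tables in \cite[Appendix B]{BH}, which enumerate for each admissible $m$ and each deformation type of K3 surface with a purely non-symplectic $\mu_m$-action the fixed loci of all subgroups, including the orders of the stabilizers of fixed curves; the bound $b\leq 42$ should then emerge by direct inspection of all the entries. The matching lower bound is realized by the $n=2$ case of Theorem \ref{smallmld}, namely the pair $(\mathbb{P}^2,\frac{1}{2}H_0+\frac{2}{3}H_1+\frac{6}{7}H_2+\frac{41}{42}H_3)$, whose mld is exactly $1/42$ at the generic point of $H_3$ and whose index-$1$ cover contains a curve fixed pointwise by the full group $\mu_{42}$.
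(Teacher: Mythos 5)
Your proof is correct and follows essentially the same route as the paper: index-$1$ cover, minimal resolution, the identity $\mld(X,D)=\mld(W,D')$, Proposition \ref{discrepformula} for the closed points, and the Brandhorst--Hofmann tables for the curve stabilizers. The only organizational difference is that the paper first observes that the mld lies in $\frac{1}{m}\Z$, so $\mld<\frac{1}{42}$ forces $m>42$ and hence (by the classification) $m\in\{44,48,50,54,66\}$; this shrinks your ``combing through the tables'' step to verifying that the full group never fixes a curve pointwise for those five orders (every proper subgroup already having order at most $33\leq m/2$), which is exactly the zero-dimensional-fixed-locus fact the paper cites.
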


\begin{proof}
By Theorem \ref{smallmld}, there is a klt Calabi-Yau pair
with standard coefficients of dimension 2 that has mld $1/42$.

Suppose that there is a klt Calabi-Yau pair $(X,E)$ with standard coefficients
of dimension 2 that has mld less than $1/42$. Let $m$ be the index
of $(X,E)$. The mld is a positive integer divided by $m$, and so we must
have $m>42$. By Proposition \ref{dim2pair}, we have $m\leq 66$.
Let $Y$ be the index-1 cover of $(X,E)$, so that $(X,E)=Y/\mu_m$
with $\mu_m$ acting purely non-symplectically. Let $Z$ be the minimal
resolution of $Y$, and let $(W,D)=Z/\mu_m$. Here $Z$ is a K3 surface.
The proof of Lemma \ref{samemld} shows that $\mld(X,E)=\mld(W,D)$.

By Brandhorst and Hofmann's classification
of purely non-symplectic
cyclic group actions on K3 surfaces, $m$ must be 44, 48, 50, 54, or 66
\cite[Appendix B]{BH}. In each case, the classification shows that
the subspace of $Z$ fixed by $G$ has dimension zero. Equivalently,
each curve in $Z$ is fixed by a proper subgroup of $G$.
So the coefficients of $D$ have the form $(b-1)/b$ with
$b\leq m/2$. Equivalently,
the mld of $(W,D)$ at each point of codimension 1 in $X$
is at least $2/m$. By Proposition \ref{discrepformula},
the mld of $(W,D)$ at each closed point is also at least $2/m$. So
$\mld(W,D)$ is at least $2/m$, hence at least $1/33$, contradicting
that it is less than $1/42$.
\end{proof}

\section{Higher-dimensional klt Calabi-Yau varieties with large index}
\label{kltsection}

In this section, we find klt Calabi-Yau varieties in every dimension at least $2$,
which we conjecture have the largest possible index.  In dimension $2$, the
construction reproduces the index $19$ example mentioned in the proof of Proposition $\ref{dim2}$.
Each of our varieties is constructed as a quotient of a weighted projective
hypersurface by a cyclic group action.  To define these actions, we'll use 
the techniques of Artebani-Boissi\`ere-Sarti \cite{ABS} to write down
hypersurfaces with an action of a large cyclic group. We only manage
to compute the index of our example exactly in dimensions at most 30.
So, at least in low dimensions, we construct klt Calabi-Yau varieties
of extremely large index.

First, we prove some general properties about hypersurfaces defined by loop potentials,
some of which appear in \cite{ABS}.
A {\it potential }is a sum of $n$ monomials in $n$ variables.
A {\it loop potential} has the form
\begin{equation}
\label{loop_potential}
   W \coloneqq x_1^{b_1}x_2 + \cdots + x_{n-1}^{b_{n-1}}x_n + x_n^{b_n}x_1 
\end{equation}
for some integers $b_i \geq 2$.  Suppose that for some positive
integer weights $a_1,\ldots,a_n$ on the variables, $W$
is homogeneous of degree $d:=\sum_i a_i$. Then the hypersurface $X \coloneqq \{W = 0\}$ in $\mathbb{P}(a_1,\ldots,a_n)$ is quasi-smooth and Calabi-Yau.  Writing $W$ in the form 
$$W = \sum_{i=1}^n \prod_{i=1}^n x_i^{a_{ij}},$$
define an associated matrix $A = (a_{ij})$:
$$A = \begin{pmatrix}
b_1 & 1 & & &\\
& b_2 & 1 & &\\
& & \ddots & \ddots \\
 & & & b_{n-1} & 1 \\
1 & & & & b_n
\end{pmatrix}
.$$
Let $\mathrm{Aut}(W)$ be the group of diagonal automorphisms of $\C^n$ preserving the potential $W$.  By the results of \cite{ABS}, we may identify $\mathrm{Aut}(W)$ with $A^{-1} \Z^n / \Z^n.$ That is, the columns of $A^{-1}$ generate $\mathrm{Aut}(W)$ modulo $\Z^n$, where an element $(c_1,\ldots,c_n) \in \mathrm{Aut}(W)$ acts on $\C^n$ by multiplication by the diagonal matrix $\mathrm{diag}(e^{2 \pi i c_1},\ldots, e^{2 \pi i c_n}).$

This action descends to an action on the weighted projective hypersurface $X$, giving a surjective homomorphism $\alpha\colon \mathrm{Aut}(W) \rightarrow \mathrm{Aut}_T(X)$ to the group of toric automorphisms of $X$.

\begin{lemma}[{cf. \cite[Proposition 2]{ABS}}]
\label{loopmatrix}
The group $\mathrm{Aut}(W)$ has order $\Gamma  \coloneqq  \det(A) = (-1)^{n+1} + b_1 \cdots b_n$.  The last column of $A^{-1}$ is
$$v_n = \frac{1}{\Gamma}\begin{pmatrix}
(-1)^{n-1} \\
(-1)^{n-2} b_1 \\
(-1)^{n-3} b_1 b_2 \\
\vdots \\
b_1 \cdots b_{n-1}
\end{pmatrix}.$$
The other columns $v_i$ of $A^{-1}$ satisfy the following relations modulo $\Z^n$: $v_i = (-1)^{n-i} b_n b_{n-1} \cdots b_{i+1} v_n$, $i = 1,\ldots,n-1$.  In particular, $\mathrm{Aut}(W)$ is cyclic with generator $\varphi  \coloneqq  v_n$ (or any one of the other columns).  The first row of $A^{-1}$ is 
$$w_1 = \frac{1}{\Gamma}\begin{pmatrix}
b_2 \cdots b_n &
-b_3 \cdots b_n &
b_4 \cdots b_n &
\cdots &
(-1)^{n-1}
\end{pmatrix}.$$
The other rows $w_i$ of $A^{-1}$ satisfy the following relations modulo $\Z^n$: $w_i = (-1)^{i-1} b_1 b_2 \cdots b_{i-1}$, $i = 2,\ldots n$.
\end{lemma}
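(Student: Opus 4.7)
My plan is to prove the lemma entirely by linear algebra on the near-circulant matrix $A$, leveraging the fact that $\mathrm{Aut}(W) = A^{-1}\Z^n/\Z^n$ has order $|\det(A)|$.

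First I would compute $\det(A) = \Gamma$ by cofactor expansion along column $1$. The only nonzero entries there are $b_1$ in row $1$ and $1$ in row $n$; the corresponding minors are triangular with determinants $b_2\cdots b_n$ and $1$, and the $(n,1)$ cofactor carries the sign $(-1)^{n+1}$, giving $\det(A) = b_1\cdots b_n + (-1)^{n+1}$. Next I would determine $v_n$ by solving $Av_n = e_n$ directly: rows $1,\ldots,n-1$ yield the recurrence $(v_n)_{k+1} = -b_k(v_n)_k$ and hence $(v_n)_j = (-1)^{j-1}b_1\cdots b_{j-1}(v_n)_1$, while row $n$ becomes $(v_n)_1 + b_n(v_n)_n = 1$, which after noting that $1+(-1)^{n-1}b_1\cdots b_n = (-1)^{n-1}\Gamma$ fixes $(v_n)_1 = (-1)^{n-1}/\Gamma$. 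The formula for the first row $w_1$ is obtained by the same strategy, solving $w_1 A = e_1^T$ and running the recurrence across columns $2,\ldots,n$ of $A$.

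The essential structural input is then the identity $A e_{i+1} = b_{i+1} e_{i+1} + e_i$ for $1 \le i \le n-1$, which is read off the columns of $A$. Applying $A^{-1}$ gives $v_i + b_{i+1} v_{i+1} = e_{i+1} \in \Z^n$, so $v_i \equiv -b_{i+1} v_{i+1} \pmod{\Z^n}$; iterating from $i = n-1$ down to $i = 1$ yields $v_i \equiv (-1)^{n-i} b_n b_{n-1} \cdots b_{i+1} v_n \pmod{\Z^n}$. Since every column of $A^{-1}$ is a $\Z$-multiple of $v_n$ modulo $\Z^n$ and the columns generate $A^{-1}\Z^n/\Z^n$, the group $\mathrm{Aut}(W)$ is cyclic with generator $\varphi = v_n$. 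The dual row relation $w_i \equiv (-1)^{i-1} b_1 \cdots b_{i-1} w_1 \pmod{\Z^n}$ is obtained analogously: expanding the standard covectors using $r_j = b_j e_j^T + e_{j+1}^T$ recursively writes $e_i^T - (-1)^{i-1} b_1 \cdots b_{i-1} e_1^T$ as an explicit integer combination of rows $r_1,\ldots,r_{i-1}$ of $A$, after which invertibility of $A$ delivers the congruence.

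I do not expect any real obstacle; the argument is routine. The only care needed is sign bookkeeping to hit the stated formulas exactly, together with the standard lattice-index identity $[A^{-1}\Z^n:\Z^n] = |\det(A)|$ that equates $|\mathrm{Aut}(W)|$ with $\Gamma$.
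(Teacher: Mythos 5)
Your proposal is correct, and all the computations check out: the cofactor expansion for $\det(A)$, the back-substitution solving $Av_n=e_n$ and $w_1A=e_1^{\mathsf{T}}$, and the column/row recursions $v_i\equiv -b_{i+1}v_{i+1}$ and $w_{i+1}\equiv -b_iw_i$ modulo $\Z^n$ all give exactly the stated formulas (and you have correctly read the row relation as $w_i=(-1)^{i-1}b_1\cdots b_{i-1}\,w_1$, supplying the factor of $w_1$ that the statement elides). The paper itself offers no proof of this lemma, deferring entirely to Proposition 2 of Artebani--Boissi\`ere--Sarti, so there is nothing to compare against beyond noting that your direct linear-algebra verification is the standard argument and is self-contained; the one point worth making explicit is that cyclicity of order exactly $\Gamma$ follows either from your lattice-index identity $[A^{-1}\Z^n:\Z^n]=|\det A|$ or, even more directly, from the fact that $\Gamma v_n$ has an entry equal to $\pm 1$, so $v_n$ already has order $\Gamma$ in $\Q^n/\Z^n$.
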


The kernel of $\alpha \colon \mathrm{Aut}(W) \rightarrow \mathrm{Aut}_T(W)$ is a subgroup $J_W \subset \mathrm{Aut}(W)$ of order $d$ generated by the vector of charges $(q_1,\ldots,q_n)$, where each {\it charge} $q_i$ is the sum of the entries of the $i$th row of $A^{-1}$.  The charges have the property $q_i = a_i/d$.  Lemma \ref{loopmatrix} implies that 
$$q_1 = \frac{1}{\Gamma}\left((-1)^{n-1} + (-1)^{n-2} b_n + \cdots + b_2 \cdots b_n\right).$$
Modulo $\Z^n$, the other charges are multiples of this one, so that the degree $d$ equals the denominator of $q_1$:
\begin{equation}
\label{degree-of-loop}
    d = \frac{\Gamma}{\gcd(\Gamma, (-1)^{n-1} + (-1)^{n-2} b_n + \cdots + b_2 \cdots b_n)}.
\end{equation}
The same reasoning shows that the degree $d^{\mathsf{T}}$ of the mirror hypersurface (which is defined by the potential $W^{\mathsf{T}}$ associated to the matrix $A^{\mathsf{T}}$) is
\begin{equation}
\label{degree-of-transpose-loop}
d^{\mathsf{T}} = \frac{\Gamma}{\gcd(\Gamma, (-1)^{n-1} + (-1)^{n-2} b_1 + \cdots + b_1 \cdots b_{n-1})}.
\end{equation}
Loop potentials are useful for constructing cyclic group actions which are free in codimension 1, in light of the following proposition.

\begin{proposition}
\label{freecodim1}
If a hypersurface $X$ with $\dim(X) \geq 3$ is defined by a loop potential, the action of $\mathrm{Aut}_T(X)$ is free in codimension $1$.
\end{proposition}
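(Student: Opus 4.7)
The plan is to assume for contradiction that some non-identity $g \in \mathrm{Aut}_T(X)$ fixes an irreducible divisor $E \subset X$ pointwise, and to use the hypothesis $\dim X \geq 3$ (equivalently $n \geq 5$) to reach a contradiction.

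First I would recall that the fixed locus of a nontrivial toric automorphism on the ambient $\mathbb{P} \coloneqq \mathbb{P}(a_1, \ldots, a_n)$ is a union of proper coordinate weighted projective subspaces $\{x_j = 0 : j \notin S\}$ for various $S \subsetneq \{1, \ldots, n\}$. Pick an irreducible component $F$ of $\mathrm{Fix}_{\mathbb{P}}(g)$ containing $E$. Either $F \not\subset X$, in which case $\dim E \leq \dim F - 1$ forces $\dim F \geq n - 2$, so $F = \{x_i = 0\}$ for some $i$; or $F \subset X$, in which case $E = F$ has codimension $2$ in $\mathbb{P}$, so $F = \{x_i = x_j = 0\}$ for some $i \neq j$. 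In the second case, $F \subset X$ would require every monomial $x_k^{b_k} x_{k+1}$ of $W$ to involve $x_i$ or $x_j$, i.e., $\{1, \ldots, n\} \subseteq \{i-1, i, j-1, j\} \pmod{n}$; this is impossible for $n \geq 5$, which is precisely where the dimension hypothesis enters.

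The essential case is therefore $F = \{x_i = 0\}$. Lift $g$ to a representative $\tilde g = (\tilde g_1, \ldots, \tilde g_n) \in (\C^*)^n$, viewing $g$ as its class in $\mathrm{Aut}_T(\mathbb{P}) = (\C^*)^n/\C^*_a$, where $\C^*_a$ is the diagonal one-parameter subgroup acting on $\C^n$ by the weights $(a_1, \ldots, a_n)$. That $g$ fixes $\{x_i = 0\}$ pointwise says that $(\tilde g_j)_{j \neq i}$ lies in the diagonal $\C^*$-subgroup with weights $(a_j)_{j \neq i}$, so after multiplying $\tilde g$ by an element of $\C^*_a$ we may assume $\tilde g_j = 1$ for all $j \neq i$, leaving only $\tilde g_i = h_i$ possibly nontrivial. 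Since $g \in \mathrm{Aut}_T(X)$, the action of $\tilde g$ scales $W$ by a global constant $c$, that is, $\tilde g_k^{b_k} \tilde g_{k+1} = c$ for every $k$ (indices mod $n$). For $n \geq 3$ there exists $k \notin \{i-1, i\}$, and the corresponding relation reads $1 = c$; then taking $k = i-1$ yields $h_i = c = 1$, so $g$ is trivial, contradicting the assumption.

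The main obstacle---really the only substantive step---is this last computation. It exploits the ``loop'' structure of $W$ in an essential way: each variable $x_i$ appears both as the base of one monomial and as the tail of another, so if a toric automorphism acts trivially on the $n-1$ variables other than $x_i$, the two monomials involving $x_i$ are pinned to the same global scalar as the monomials disjoint from $x_i$, forcing $h_i = 1$. The remaining ingredients---recognizing that fixed loci on a weighted projective space are unions of coordinate subspaces, and the elementary counting that rules out $F \subset X$ when $n \geq 5$---are essentially bookkeeping.
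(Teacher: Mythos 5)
Your proof is correct, and while the first reduction coincides with the paper's, the decisive computation is genuinely different. Both arguments begin the same way: the fixed locus of a nontrivial toric automorphism is a union of proper coordinate subspaces, and the loop structure of $W$ (no two variables kill every monomial when $n\geq 5$) rules out a fixed divisor sitting inside a codimension-$2$ stratum, so everything comes down to showing that no nonidentity element of $\mathrm{Aut}_T(X)$ fixes a coordinate hyperplane pointwise. At that point the paper works additively in $\mathrm{Aut}(W)\subset(\Q/\Z)^n$: it encodes the stabilizer condition as congruences $rq_i\equiv c_i \bmod \Z$, reduces the rational parameter to $r=1/A$ with $A\geq 2$, invokes the cyclic structure of $\mathrm{Aut}(W)$ from Lemma \ref{loopmatrix} to get $c_2\equiv -b_1c_1$, and derives the contradiction $As=1$ from the degree relation $b_1a_1+a_2=d$. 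You instead work multiplicatively: after normalizing the lift by an element of the weighted $\C^*$ so that $\tilde g_j=1$ for $j\neq i$, the fact that $\tilde g$ scales every monomial of $W$ by the same constant $c$ lets you read off $c=1$ from any monomial avoiding $x_i$ (which exists since $n\geq 3$) and then $h_i=c=1$ from the monomial $x_{i-1}^{b_{i-1}}x_i$, which is linear in $x_i$. This bypasses the charges, the explicit inverse matrix $A^{-1}$, and the cyclicity of $\mathrm{Aut}(W)$ entirely, and is arguably cleaner and more transparent about where the loop structure enters. Two small points of care: in your second case you should also dispose of the possibility that $F$ is a coordinate \emph{hyperplane} contained in $X$ (not just a codimension-$2$ stratum); this is excluded by the same observation, since some monomial of $W$ omits $x_i$, so $x_i$ does not divide $W$. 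And the passage from ``the action is not free in codimension $1$'' to ``some single nonidentity $g$ fixes an irreducible divisor pointwise'' uses finiteness of the group and closedness of fixed loci, which is worth a sentence. Neither affects the validity of the argument.
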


\begin{proof}
First, we reduce to only checking the stabilizers of coordinate hyperplanes.  Indeed, $\mathrm{Aut}_T(X)$ is a subgroup of the torus action on $\mathbb{P}(a_1,\ldots,a_n)$, so the only nontrivial stabilizers of the action of $\mathrm{Aut}_T(X)$ on weighted projective space occur on coordinate strata in the complement of the open torus.  Further, we claim that the intersection of $X$ with a toric stratum $S$ is only codimension $1$ in $X$ if $S$ is codimension $1$ in $\mathbb{P}(a_1,\ldots,a_n)$.  Indeed, if $X$ contained $S$ of codimension $2$ in weighted projective space, then the potential $W = x_1^{b_1}x_2 + \cdots + x_{n-1}^{b_{n-1}}x_n + x_n^{b_n}x_1$ would become identically zero after setting some pair of variables $x_i$ and $x_j$ equal to zero.  Since $\dim(X) \geq 3$, we have $n \geq 5$, and no choice of $i$ and $j$ makes the loop potential zero.

Thus, it's enough to compute the stabilizers of each $H_i  \coloneqq  X \cap \{x_i = 0\}$.  Without loss of generality we'll just show the stabilizer of $H_n$ is trivial.  To this end, let $H \subset \mathrm{Aut}_T(W)$ be the subgroup fixing $H_n$ and let $G = \alpha^{-1}(H)$.  Then $G$ is a cyclic subgroup of $\mathrm{Aut}(W)$.  We want to show that in fact $G = J_W$.

For an element $g \coloneqq (c_1,\ldots,c_n)$ in $\mathrm{Aut}(W)$, the statement that the element $\alpha(g)$ fixes $H_n$ is equivalent to the following statement in terms of coordinates on $\C^n$: for all $x_1,\ldots,x_{n-1} \in \C$, there exists a $t \in \C^*$ such that 
$$g \cdot (x_1,\ldots,x_{n-1},0) = t \cdot (x_1,\ldots,x_{n-1},0),$$
where the $\C^*$ action is $t \cdot (y_1,\ldots,y_n) = (t^{a_1} y_1,\ldots,t^{a_n} y_n)$.  Now, the action of $\mathrm{Aut}(W)$ is by diagonal matrices whose entries are unit complex numbers, so it's immediate that $|t| = 1$ and in fact that $t = e^{i \theta}$ for $\theta$ a rational multiple of $\pi$.  In order to compute in terms of elements of $(\Q/\Z)^n$, it will be convenient to express the element $t \in \C^*$ as $r (q_1,\ldots,q_n) \in (\Q/\Z)^n$, where $(q_1,\ldots,q_n)$ is the vector of charges and $r$ is some rational number. (Remember that we use the convention that each vector $(c_1,\ldots,c_n) \in (\Q/\Z)^n$ corresponds to the action by the diagonal matrix $\mathrm{diag}(e^{2 \pi i c_1},\ldots, e^{2 \pi i c_n})$).

There is a point of $H_n$ with all coordinates $x_1,\ldots,x_{n-1}$ nonzero.  Otherwise, since $H_n$ is codimension $1$ in $X$, it must contain some entire stratum $\{x_i = x_j = 0\}$, which we saw above cannot occur when $\dim(X) \geq 3$.  Thus, if the element $g$ fixes $H_n$, there is an $r \in \Q$ such that the following congruences hold:
\begin{equation}
\label{fixedhyperplane}
r q_i = c_i \bmod \Z, 1 \leq i \leq n-1.
\end{equation}
We already know that the vector of charges itself generates $J_W$, so if $r \in \Z$, $r(q_1,\ldots,q_n)$ will equal an element of $J_W$.  We will derive a contradiction from the following assumption: there exists $g \in \mathrm{Aut}(W)$ and $r$ {\it not} an integer such that the congruences \eqref{fixedhyperplane} hold.  This will show that $G = J_W$, as required.

First, reduce to the case that $r = 1/A$ for a positive integer $A \geq 2$.  Indeed, we can always replace $r$ and $g$ by the same integer multiple of each and the congruences \eqref{fixedhyperplane} will still hold; we can also add an integer to $r$ and multiply $g$ by the corresponding element of $J_W$.  Using such adjustments, any rational number $r = B/A$ in lowest terms can be turned into $1/A$.  Consider the first two coordinates $c_1$ and $c_2$ of $g$.  We know that $c_2 \equiv -b_1 c_1 \bmod \Z$, since this relation holds for the generator $\varphi$ of the group $\mathrm{Aut}(W)$ and $g$ is some multiple of $\varphi$.  But $c_i \equiv rq_i \bmod \Z$, so we have $rq_2 \equiv -b_1 rq_1 \bmod \Z$.  Therefore, $s \coloneqq b_1 r q_1 + r q_2$ is an integer.  Substituting $r = 1/A$ and using the definition of charges $q_i = a_i/d$, we have 
$$b_1 a_1 + a_2= A d s.$$
However, our loop potential is homogeneous of degree $d$ and contains the monomial $x_1^{b_1} x_2$ so $b_1 a_1 + a_2 = d$.  This would mean that $A s = 1$, contradicting the assumption that $A \geq 2$ and $s$ is an integer.  This completes the proof.
\end{proof}

Next, we'll state a criterion for whether the induced action of $\mathrm{Aut}_T(X)$ on $H^0(X,K_X) \cong \C$ is faithful. This in turn implies that the quotient $X/\mathrm{Aut}_T(X)$ has index $|\mathrm{Aut}_T(X)| = \Gamma/d$.  Let $\mathrm{SL}(W) \subset \mathrm{Aut}(W)$ be the subgroup with determinant $1$.  Since $X$ is Calabi-Yau, we have $J_W \subset \mathrm{SL}(W)$.  The image $\alpha(\mathrm{SL}(W)) \subset \mathrm{Aut}_T(X)$ is the kernel of the action on $H^0(X,K_X)$.  By \cite[Corollary 1]{ABS}, $|\mathrm{SL}(W)| = \Gamma/d^{\mathsf{T}}$.  Therefore, we have the following criterion:

\begin{proposition}
\label{faithful}
The group $\mathrm{Aut}_T(X)$ acts faithfully on $H^0(X,K_X)$ if and only if $d \cdot d^{\mathsf{T}} = \Gamma$.
\end{proposition}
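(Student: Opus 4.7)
The plan is to prove this as a short chain of equivalences using the two kernel/order facts that the preceding paragraph already records, so essentially no new computation is required.

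First I would unpack the definitions: by construction, $\alpha\colon \mathrm{Aut}(W)\to \mathrm{Aut}_T(X)$ is surjective with kernel $J_W$, and the text notes that $\alpha(\mathrm{SL}(W))\subset \mathrm{Aut}_T(X)$ is exactly the kernel of the action of $\mathrm{Aut}_T(X)$ on $H^0(X,K_X)$. Therefore the action of $\mathrm{Aut}_T(X)$ on $H^0(X,K_X)$ is faithful if and only if $\alpha(\mathrm{SL}(W))$ is trivial, i.e., if and only if $\mathrm{SL}(W)\subset \ker(\alpha)=J_W$.

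Next I would combine this with the inclusion $J_W\subset \mathrm{SL}(W)$ noted just before the proposition (which holds because $X$ is Calabi-Yau). These two inclusions are equivalent to the equality $\mathrm{SL}(W)=J_W$, and since both groups are finite, equivalent to $|\mathrm{SL}(W)|=|J_W|$.

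Finally, I would plug in the two orders that are already available: $|J_W|=d$ (from the discussion of the degree as the order of the subgroup generated by the vector of charges) and $|\mathrm{SL}(W)|=\Gamma/d^{\mathsf{T}}$ (from \cite[Corollary 1]{ABS}, cited in the paragraph above). The equality $|\mathrm{SL}(W)|=|J_W|$ then becomes $\Gamma/d^{\mathsf{T}}=d$, i.e., $d\cdot d^{\mathsf{T}}=\Gamma$, which is the claim. There is no genuine obstacle here; the only thing to be careful about is citing \cite[Corollary 1]{ABS} for $|\mathrm{SL}(W)|=\Gamma/d^{\mathsf{T}}$ and recalling that $J_W\subset \mathrm{SL}(W)$ so that $\mathrm{SL}(W)\subset J_W$ forces equality rather than just an inclusion.
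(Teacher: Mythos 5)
Your proof is correct and follows essentially the same route as the paper: both reduce faithfulness to the equality $\mathrm{SL}(W)=J_W$, use the inclusion $J_W\subset\mathrm{SL}(W)$ to convert this to an equality of orders, and then substitute $|J_W|=d$ and $|\mathrm{SL}(W)|=\Gamma/d^{\mathsf{T}}$. No differences worth noting.
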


\begin{proof}
The kernel of $\alpha$ is $J_W$, so the action is faithful if and only if $\mathrm{SL}(W) = J_W$.  Since $J_W \subset \mathrm{SL}(W)$, this holds exactly when the two groups have the same order.  But $|J_W| = d$, so this is equivalent to $d = \Gamma/d^{\mathsf{T}}$.
\end{proof}

Nearly identical arguments apply to any $X$ with $\dim(X) \geq 3$ defined by a polynomial of the form $W = x_0^{b_0} + f_{\mathrm{loop}}$, where $f_{\mathrm{loop}}$ is of the form of \eqref{loop_potential}.  The corresponding matrix $A$ is block diagonal, so there is a subgroup $G \cong \Z/\Gamma \Z$ of $\mathrm{Aut}(X)$ with index $b_0$ whose image in $\mathrm{Aut}_T(X)$ acts freely in codimension $1$.  The quotient of $X$ by this cyclic group is therefore a klt Calabi-Yau variety (rather than a pair).  We can check whether this action is purely non-symplectic using a criterion very similar to Proposition \ref{faithful}.

Next, we define several sequences of numbers based on Sylvester's sequence for each dimension $n$. These will be used to define the exponents and weights for our example of large index. 

\begin{definition}\label{b,r+i}
For $n=2r+1$, $r\geq 1$ or $n=2r$, $r\geq 1$, define integers
$b_0,\ldots,b_{n-1}$ as follows.  Set $b_i \coloneqq s_i$ when
$0 \leq i \leq r$, and define $b_{r+i}$ by the inductive formula:
\begin{multline*}
b_{r+i} \coloneqq 1 + (b_{r+1-i}-1)^2[1 + (b_{r}-1)b_{r+1}\\
+(b_{r-1}-1)b_{r}b_{r+1}b_{r+2}
+\cdots\
+(b_{r+2-i}-1)b_{r+3-i} \cdots b_{r+i-1}]
\end{multline*}
for $1\leq i\leq r$ when $n=2r+1$ or $1\leq i\leq r-1$ when $n=2r$.
\end{definition}

\begin{definition}
For $n=2r+1$, $r\geq 1$, define an integer $d$ by:
\begin{equation*}
\begin{split}
 d & \coloneqq (b_0-1)^2b_{1} \cdots b_{2r}+ (b_{1}-1)[1 + (b_{r}-1)b_{r+1}  \\
&+(b_{r-1}-1)b_rb_{r+1}b_{r+2}+\cdots+(b_{1}-1)b_2\cdots b_{2r}].
\end{split}
\end{equation*}
For $n=2r$, $r\geq 1$, define an integer $d$ by:
\begin{equation*}
\begin{split}
d& \coloneqq (b_1-1)^2b_{2}\cdots b_{2r-1}+(b_2-1)[1+(b_r-1)b_{r+1}\\
&+(b_{r-1}-1)b_rb_{r+1}b_{r+2}+\cdots+(b_2-1)b_{3}\cdots b_{2r-1}].
\end{split}
\end{equation*}
\end{definition}

\begin{definition}
For $n=2r+1$, $r\geq 1$, define $b_{2r+1} \coloneqq \frac{d+1}{2}$.

For $n=2r$, $r\geq 1$, define $b_{2r} \coloneqq \frac{2d+1}{3}$.
\end{definition}

\begin{definition}\label{ai}
For $n=2r+1$, $r\geq 1$, define $a_j$ for $0\leq j \leq2r+2$ by the following
inductive formulas. They are positive integers.
\begin{equation*}
\begin{split}
&a_{2r+1}=a_{2r+2} \coloneqq 1, \\
&a_{2r-i} \coloneqq a_{2r-i+1}\left(\frac{1}{s_{i+1}}\right)+(s_{r+1}-1) \left(1-\frac{1}{s_{i+1}}\right) \bigg[ a_{2r-i+1}\left(1-\frac{1}{s_{i+1}}\right)+\cdots\\
&+ a_{2r}\left(1-\frac{1}{s_2}\right)+ a_{2r+1}\left(1-\frac{1}{s_1}\right) + a_{2r+2}\left(1-\frac{1}{s_0} \right) \bigg] b_{r+1}\cdots b_{2r-i-1} \text{ for } 0 \leq i\leq r-1,\\
&a_i \coloneqq d-a_{2r+1-i}b_{2r+1-i} \text{ for } 0\leq i \leq r.
\end{split}
\end{equation*}

For $n=2r$, $r\geq 1$, define $a_j$ for $0\leq j \leq 2r+1$ by the inductive formulas:
\begin{equation*}
\begin{split}
&a_{2r} = a_{2r+1} \coloneqq 1\\
&a_{2r-1-i} \coloneqq a_{2r-i}\left(\frac{1}{s_{i+2}}\right)+(s_{r+1}-1) \left(1-\frac{1}{s_{i+2}}\right) \bigg[ a_{2r-i}\left(1-\frac{1}{s_{i+2}}\right)+\cdots \\
&+a_{2r-1} \left(1-\frac{1}{s_3}\right)+a_{2r}\left(1-\frac{1}{s_2}\right) + a_{2r+1}\left(1-\frac{1}{s_1}\right) \bigg] b_{r+1}\cdots b_{2r-i-2} \text{ for } 1\leq i \leq r-2,\\
&a_{i} \coloneqq d-b_{2r+1-i}a_{2r+1-i} \text{ for } 0\leq i \leq r,\\
&a_0 \coloneqq \frac{d}{2}.
\end{split}
\end{equation*}
\end{definition}

\begin{comment}

We claim that every $a_i$ just defined is an integer.  To see this in the $n = 2r+1$ case, note that $s_{r+1}-1 = s_0 \cdots s_r$, so multiplication by $s_{r+1}-1$ cancels all denominators when the multiplication is expanded with the exception of the terms
\begin{align*}
    & a_{2r-i+1}\left(\frac{1}{s_{i+1}}\right) + (s_{r+1}-1) \left(1-\frac{1}{s_{i+1}}\right) a_{2r-i+1}\left(1-\frac{1}{s_{i+1}}\right) \\
    & = a_{2r-1+1} \left( \frac{1}{s_{i+1}} + (s_{i+1}-1)s_{i+1} \cdots s_r \left(\frac{s_{i+1}-1}{s_{i+1}}\right)^2 \right)\\
    & = a_{2r-1+1} \left( \frac{1}{s_{i+1}} + \frac{(s_{i+1}-1)^3 s_{i+2} \cdots s_{r}}{s_{i+1}}\right).
\end{align*}

In the last expression, note that $s_{i+2},\ldots,s_r $ are all $1 \bmod s_{i+1}$ while $(s_{i+1}-1)^3 \equiv -1 \bmod s_{i+1}$.  Therefore, the sum will be an integer.  The same argument works in the $n$ even case.

\end{comment}

\begin{definition}
For $n=2r+1$, $r\geq 1$, we define an integer $b_{2r+2} \coloneqq d-a_{r+1}$.

For $n=2r$, $r\geq 1$, we define an integer $b_{2r+1} \coloneqq d-a_{r+1}$.
\end{definition}

\begin{definition}
For $n=2r+1$, $r\geq 1$, define an integer $m$ by:
\begin{equation*}
\begin{split}
m& \coloneqq b_0\cdots b_{2r+1}-b_0\cdots b_{r}b_{r+2}\cdots b_{2r+1}\\
&+ b_0\cdots b_{r-1}b_{r+2}\cdots b_{2r+1} - b_0\cdots b_{r-1}b_{r+3}\cdots b_{2r+1}+ \cdots - b_0 + 1.
\end{split}
\end{equation*}
For $n=2r$, $r\geq 1$, define an integer $m$ by:
\begin{equation*}
\begin{split}
m & \coloneqq b_1\cdots b_{2r} - b_1\cdots b_rb_{r+2}\cdots b_{2r}+ b_1\cdots b_{r-1}b_{r+2}\cdots b_{2r} - \cdots + 1.
\end{split}
\end{equation*}
\end{definition}

We were led to the choice of weights $a_i$ and exponents $b_i$ in Definition \ref{ai} and Definition \ref{b,r+i} by an inductive procedure. First we chose
the general form of the equation in \eqref{oddeq} or \eqref{eveneq} below. Since each monomial in the equation must have degree $d$, we get a relation involving $d$ and some $a_i$ and $b_i$. We use the relations given by all monomials and the relation that all the weights $a_i$ sum to $d$ to express $d$ as $r+1$ linear combinations of $a_{r+i},\ldots, a_{2r+2}$ for $1\leq i \leq r+1$ and express each $a_{r+i}$ as a linear combination of $a_{r+i+1},\ldots,a_{2r+2}$ for $1\leq i \leq r$. During each step, we choose one weight as big as possible which determines our choice of the corresponding exponent. In each step, we can increase $i$ by one. This process yields the formulas above.

We now define our example. When $n=2r+1$ for $r\geq 1$,
let $X_d$ be the hypersurface in $\mathbb{P}(a_0,a_1,\ldots, a_{2r+1}, a_{2r+2})$ of degree $d$ given by
\begin{equation}\label{oddeq}
x_0^{b_0}x_{2r+2}+ x_1^{b_1} x_{2r+1} + \cdots + x_r^{b_r}x_{r+2}+x_{r+1}^{b_{r+1}}x_r + x_{r+2}^{b_{r+2}} x_{r-1} + \cdots +x_{2r+1}^{b_{2r+1}}x_0
       +x_{2r+2}^{b_{2r+2}}x_{r+1} = 0.
\end{equation}

This is a loop potential, but with the variables written in a different order
than in \eqref{loop_potential} above; one can check that it is homogeneous of weighted degree $d$.
The group $G = \mathrm{Aut}_T(X)$ is cyclic of order $m$ and acts freely in codimension $1$ on $X$ by Proposition \ref{freecodim1}.
Similarly, when $n = 2r$ for $r \geq 1$,
let $X_d$ be the hypersurface in $\mathbb{P}(a_0,a_1,\ldots, a_{2r}, a_{2r+1})$ of degree $d$ given by
\begin{equation}\label{eveneq}
x_0^{b_0} + x_1^{b_1}x_{2r+1}+ x_2^{b_2} x_{2r} + \cdots + x_r^{b_r}x_{r+2}+x_{r+1}^{b_{r+1}}x_r
       + x_{r+2}^{b_{r+2}} x_{r-1} + \cdots +x_{2r}^{b_{2r}}x_1+x_{2r+1}^{b_{2r+1}}x_{r+1} = 0.
\end{equation}

This potential has the form $x_0^2 + f_{\mathrm{loop}}$.  There is a subgroup $G$ of index $2$ inside $\mathrm{Aut}_T(X)$ corresponding to $f_{\mathrm{loop}}$ which is cyclic of order $m$ and which acts freely in codimension $1$ on $X$ (this does not follow 
from Proposition \ref{freecodim1} when $n = 2$, but nevertheless is still true in this case).

We conjecture that the subgroup $G$ acts faithfully on
$H^0(X,K_X)\cong \C$,
so that the index of $X/G$ is in fact $m$. By Proposition \ref{faithful}, proving this conjecture would amount to
a gcd calculation involving the exponents $b_i$. This is true
in dimensions at most $30$ by computer verification, and so we have
klt Calabi-Yau varieties of extremely large index at least
in low dimensions. The number $m$
has $\log m$ asymptotic to $\log s_{n+1}$, and so this conjecture
would imply that the index of our klt Calabi-Yau $n$-fold
is comparable to that of the klt Calabi-Yau pair
in Theorem \ref{klt-index},
and much bigger than the index of our terminal Calabi-Yau
variety in Corollary \ref{can-term-ex}. More precisely, we conjecture:

\begin{conjecture}
\label{kltCYconj}
The quotient $X/G$ above has index $m$.
Moreover, in each dimension $n\geq 2$,
this is the largest possible index for a klt Calabi-Yau variety.
\end{conjecture}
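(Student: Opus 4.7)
Since $G$ acts freely in codimension one, $X/G$ is automatically a klt Calabi-Yau variety with index dividing $m=|G|$, and equality holds precisely when $G$ acts faithfully on $H^0(X,K_X)\cong \C$. The plan is to apply Proposition \ref{faithful}, which reduces this to the numerical identity $d\cdot d^{\mathsf T}=\Gamma$ for the loop submatrix $A$ governing \eqref{oddeq} or the loop block inside \eqref{eveneq}. In the odd case $n=2r+1$, $A$ is the cyclic $(2r+3)\times(2r+3)$ matrix with exponents $b_0,\ldots,b_{2r+2}$, so $\Gamma=\det A = b_0\cdots b_{2r+2}+(-1)^{n}$, and formulas \eqref{degree-of-loop} and \eqref{degree-of-transpose-loop} express $d$ and $d^{\mathsf T}$ as $\Gamma$ divided by explicit alternating sums. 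I would prove by induction on $r$ that (i) one of these alternating sums equals exactly the integer $m$ in the definition (so $|G|=\Gamma/d=m$), and (ii) the other equals $d^{\mathsf T}$ with $d\cdot d^{\mathsf T}=\Gamma$. The inductive step should follow from the recursive definition of the $b_i$ together with the pairwise coprimality of the Sylvester numbers $s_i$. The even case $n=2r$ is parallel: the matrix is block diagonal with a $1\times 1$ block $(2)$ coming from $x_0^2$ and a loop block on the remaining variables, and one applies the same criterion to the index-$2$ subgroup $G$.

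\textbf{Plan for the optimality statement.} Here I would imitate the proof of Proposition \ref{dim2}. Let $X'$ be any klt Calabi-Yau $n$-fold of index $m'$ and let $Y'\to X'$ be its index-$1$ cover; then $Y'$ is canonical Calabi-Yau and $X'=Y'/\mu_{m'}$, with $\mu_{m'}$ purely non-symplectic and free in codimension one. Apply Proposition \ref{equivterm} to obtain a $\mu_{m'}$-equivariant terminalization $Z'\to Y'$, giving a terminal Calabi-Yau $n$-fold $Z'$ equipped with a purely non-symplectic $\mu_{m'}$-action, still free in codimension one. The optimality problem then becomes: classify, or at least bound the order of, such cyclic actions on terminal Calabi-Yau $n$-folds, under the constraint that no divisor is fixed by a nontrivial subgroup.

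\textbf{Main obstacle.} The optimality half is essentially out of reach with current technology. In dimension $2$ the reduction above lands on a smooth K3 or abelian surface, where Brandhorst-Hofmann's classification \cite{BH} gives the sharp bound; in dimensions $\geq 3$ there is simply no analogous classification, and even the boundedness of the index of canonical Calabi-Yau $n$-folds (a piece of the Index Conjecture) is open. A more realistic intermediate target is a conditional statement: given Conjecture \ref{can-term-conj} in dimension $n-1$ (asserting that the maximal index of a terminal or canonical Calabi-Yau $(n-1)$-fold is $(s_{n-1}-1)(2s_{n-1}-3)$), try to bound the additional factor gained by passing from a terminal $(n-1)$-fold to a klt $n$-fold via a cyclic quotient of a product with an elliptic curve, or by more elaborate Calabi-Yau fibration structures. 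Even in this conditional form, controlling \emph{all} possible cyclic quotient constructions and showing that none exceeds $m$ is the hard part, and I do not see how to do this without genuinely new input into the structure theory of klt Calabi-Yau varieties.
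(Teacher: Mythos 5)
The statement you were asked to prove is labelled a \emph{conjecture} in the paper, and the paper does not prove it: for the first half (that $X/G$ has index $m$) the authors only observe that, by Proposition \ref{faithful}, the claim is equivalent to the identity $d\cdot d^{\mathsf T}=\Gamma$, ``a gcd calculation involving the exponents $b_i$,'' which they verify by computer through dimension $30$; for the second half (maximality of $m$) they prove only the case $n=2$ (Proposition \ref{dim2}) and leave all higher dimensions open. Your plan for the index computation is exactly the paper's intended route: freeness in codimension $1$ gives that the index of $X/G$ equals the order of the image of $G$ on $H^0(X,K_X)$, and Proposition \ref{faithful} together with \eqref{degree-of-loop} and \eqref{degree-of-transpose-loop} converts faithfulness into the numerical identity $d\cdot d^{\mathsf T}=\Gamma$. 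The difference is that you propose to close this by an induction on $r$ using the recursive definition of the $b_i$; the paper does not carry out any such argument, so if your induction actually closes, it would be new content, not a reproduction of the paper. As written, though, it is only a plan: the inductive step is precisely the open gcd computation, and nothing in your sketch shows it goes through. (One small caution if you attempt it: with $n+2$ variables in the loop, Lemma \ref{loopmatrix} gives $\Gamma=(-1)^{n+1}+b_0\cdots b_{n+1}$, which is $+1+\prod b_i$ for $n$ odd, not $\prod b_i+(-1)^n$.)

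On the optimality half, your assessment agrees with the paper's: the reduction via the index-$1$ cover and an equivariant terminalization (Proposition \ref{equivterm}) to purely non-symplectic cyclic actions, free in codimension $1$, on terminal Calabi-Yau $n$-folds is the correct analogue of the $n=2$ argument, but in dimensions $\geq 3$ there is no substitute for the Brandhorst--Hofmann classification, and the statement remains a conjecture. In short: neither you nor the paper proves this statement; your proposal correctly identifies the one piece that is in principle checkable (the gcd identity) and honestly flags the rest as out of reach, which is consistent with the paper's own stance.
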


For $n=2$, the hypersurface $X$ is
$$\{x_0^2+x_1^3x_3+x_2^7x_1+x_3^9x_2=0\}\subset \mathbb{P}^3(5,3,1,1),$$
with an action of the cyclic group $G$ of order $19$ that is free in codimension 1 and faithful
on $H^0(X,K_X)$. So $X/G$ is a klt Calabi-Yau surface of index $19$,
which is the largest possible, by Proposition \ref{dim2}.
For $n = 3$, the hypersurface $X$ is 
$$\{x_0^2 x_4 + x_1^3 x_3 + x_2^5 x_1 + x_3^{19} x_0 + x_4^{32} x_2 = 0\} \subset \mathbb{P}^4(18,12,5,1,1).$$ 
The cyclic group of order $493$ acts on $X$ with quotient a klt Calabi-Yau 3-fold of index $493$. That is well above the index $66$
of the terminal Calabi-Yau 3-fold in Corollary \ref{can-term-ex}.

For $n = 4$, the hypersurface $X$ is 
$$\{x_0^2+x_1^3x_5+x_2^7x_4+x_3^{37}x_2+x_4^{1583}x_1+x_5^{2319}x_3 = 0\} \subset \mathbb{P}^5(1187,791,339,55,1,1).$$  
The cyclic group of order $1201495$ acts on $X$ with quotient a klt Calabi-Yau 4-fold of index $1201495$.  In dimensions $3$ and $4$, our example has the largest
index among all quotients by toric automorphisms of quasi-smooth Calabi-Yau hypersurfaces defined by potentials.  This was verified by computer search, using the databases of Calabi-Yau threefold and fourfold hypersurfaces in \cite{database}.

\end{document}